\documentclass[11pt, reqno,twoside]{article}
\usepackage[english]{babel}
\selectlanguage{english}
\usepackage[utf8]{inputenc}
\usepackage{authblk}
\usepackage{latexsym,amsfonts,amssymb,amsthm,mathrsfs,amsmath,amscd,enumerate, enumitem,esint}
\setlength{\parskip}{\baselineskip}%
\setlength{\parindent}{0pt}%
\pagenumbering{arabic}
\usepackage{color}
\allowdisplaybreaks
\newtheorem{teo}{Theorem}[section]
\newtheorem{cor}[teo]{Corollary}
\newtheorem{lema}[teo]{Lemma}
\newtheorem{propo}[teo]{Proposition}
\theoremstyle{definition}

\theoremstyle{remark}
\newtheorem{remark}[teo]{Remark}

\newtheorem{ej}[teo]{Example}

\theoremstyle{estilolista}
\newtheorem{itemnumerado}[equation]{}

\setlength{\textwidth}{210mm} 
\addtolength{\textwidth}{-2in} 
\setlength{\oddsidemargin}{5mm} 
\setlength{\evensidemargin}{-5mm} 
\setlength{\marginparwidth}{0pt}
\setlength{\marginparsep}{0pt}
\setlength{\topmargin}{0pt} 
\setlength{\headheight}{12pt}
\setlength{\headsep}{12pt}
\setlength{\textheight}{297mm} 
\setlength{\footskip}{40pt}
\addtolength{\textheight}{-2in} 
\addtolength{\textheight}{-\footskip}
\usepackage{titlesec}
\newpagestyle{estiloA}[]{\headrule\sethead[\thepage][L. Melchiori, G. Pradolini and W. Ramos][]{}{Commutators of potential type operators with Lipschitz symbols}{\thepage}}
\pagestyle{estiloA}

\numberwithin{equation}{section}


		\def\zR{\ensuremath{\mathbb{R}}}
		\def\z{\ensuremath{\mathbb{R}^n}}
		\def\zZ{\ensuremath{\mathbb{Z}}}
		\def\zN{\ensuremath{\mathbb{N}}}
		\def\zL{\ensuremath{\mathbb{L}}}
		
		\newcommand{\ff}[3]{#1:#2\rightarrow #3}
		\newcommand{\fy}{f(y)}
		\newcommand{\fx}{f(x)}
		
		\newcommand{\lpw}[2]{L^{#1(\cdot)}_#2}
		
		\newcommand{\lp}[1]{L^{#1(\cdot)}}
		\newcommand{\lploc}[1]{\lp{#1}_{\rm{loc}}(\z)}
		\newcommand{\lplocc}[1]{L^{#1}_{\rm{loc}}(\z)}
		\newcommand{\lpr}[1]{L^{#1(\cdot)}(\z)}
		\newcommand{\normadeflpw}[3]{\left\Vert#1\right\Vert_{L^{#2(\cdot)}_#3}}
		\newcommand{\normadeflpl}[2]{\left\Vert#1\right\Vert_{#2(\cdot,L)}}
		\newcommand{\normadeflp}[2]{\left\Vert#1\right\Vert_{L^{#2(\cdot)}}}

		\newcommand{\normadefp}[2]{\left\Vert#1\right\Vert_{#2}}
		\newcommand{\normadeflplogq}[3]{\left\Vert#1\right\Vert_{L^{#2(\cdot)}(\log L)^{#3(\cdot)}}}
		\newcommand{\esp}{\,\,\,\,\,\,\,\,\,}
		\newcommand{\texto}[1]{\textrm{#1}}
		\newcommand{\ex}[1]{#1(\cdot)}
		\newcommand{\ca}{\mathcal{X}_Q}
		
		\newcommand{\car}{\mathcal{X}}
		
		\newcommand{\inplogdern}{\in\mathcal{P}^{log}(\z)}


		\newcommand{\mbetas}[2]{M_{#1(\cdot),#2}}
		
		\newcommand{\mbetasf}[3]{M_{#1(\cdot),#2}#3}

		\newcommand{\tfi}{T_K}

\usepackage[colorlinks,linkcolor = blue,citecolor = blue]{hyperref}
\begin{document}
	\title{Commutators of potential type operators with Lipschitz symbols on variable Lebesgue spaces with different weights}
		
	\author{Luciana Melchiori\thanks{lmelchiori@santafe-conicet.gov.ar, CONICET-UNL, Santa Fe, Argentina.}, Gladis Pradolini\thanks{gpradolini@santafe-conicet.gov.ar, CONICET-UNL, Santa Fe, Argentina.}\, and Wilfredo Ramos\thanks{oderfliw769@gmail.com. CONICET-UNNE, Corrientes, Argentina.}}
	
	\renewcommand{\thefootnote}{\fnsymbol{footnote}}
	\footnotetext{2010 {\em Mathematics Subject Classification}:
		42B25} \footnotetext {{\em Keywords and phrases}:
		Commutators, Variable Lebesgue spaces, Sparse operators}
	\date{\vspace{-1.5cm}}
	
	\maketitle
	
	\begin{abstract}
		We prove that a generalized Fefferman-Phong type condition on a pair of weights $u$ and $v$ is sufficient for the boundedness of the commutators of potential type operators from $\lpw{p}{v}$ into $\lpw{q}{u}$.
	We also give an improvement of this result in the sense that we not only consider a variable version of power bump conditions, but also weaker norms related to Musielak-Orlicz functions.
	
	We consider a wider class of symbols including Lipschitz symbols and some generalizations.
	\end{abstract}

\section{Introduction and main results}\label{S1}

	In \cite{SW}, E. Sawyer and R. Wheeden obtained Fefferman-Phong type conditions on a pair of weights in order to prove boundedness results for the fractional integral operator $I_\alpha$, between Lebesgue spaces with different weights.
	For the case of one weight, remarkably simple conditions on the weight characterizing the boundedness of $I_\alpha$ were known to hold (see \cite{MuckenhouptWheeden}).
	Motivated by the results above, in \cite{P}, C. Pérez considered weaker norms than those involved in the Fefferman-Phong type conditions in \cite{SW}, and obtained two-weighted boundedness estimates for the potential operator $\tfi$, formally defined by
	$$\tfi\fx=\int_{\z} K(x-y)\fy \, dy,$$
	whenever this integral is finite where the kernel $K$ is a non-negative and locally integrable function satisfying certain weak growth condition.
	This article was the motivation for a great variety of subsequent papers related to this kind of operator.
	For example, in \cite{LI2} and \cite{LQY}, the authors obtained weighted $L^p$ inequalities of Fefferman-Stein type for $\tfi$ and for the higher order commutators with BMO symbols associated to this operator, respectively, whenever $1<p<\infty$.
	If $b\in\lplocc{1}$ and $m\in\zN$, the commutator of order $m$ of $\tfi$ is formally defined by
	\begin{equation}\label{conmutador}
	\tfi^{b,m} \fx = \int_{\zR^n} (b(x)-b(y))^m K(x-y) f(y)dy,
	\end{equation}
	whenever this integral is finite.
	In the multilinear context, similar results were proved in \cite{BGP}.
	For these commutators two-weighted norm inequalities in the spirit of those given in \cite{P} were proved in \cite{Li} in the classical $L^p$ context, and in \cite{MP2} on the general setting of variable Lebesgue spaces.\\
	
	The commutators of fractional type operators with Lipschitz symbols were studied by several authors. For instance, in \cite{MY} the authors considered unweighted estimates for the mentioned operator acting between different Lebesgue spaces in the context of non-doubling measures.
	
	Characterizations of Lipschitz functions via the boundedness of commutators of fractional integral operators with generalized Lipschitz symbols were given in \cite{PRa}, in the general setting of variable Lebesgue spaces.
	
	In \cite{DPR} the authors give weighted $L^p-L^q$ estimates for the commutators, with Lipschitz symbols, of a great variety of fractional type operators. Later, in \cite{P} certain extrapolation techniques allow to obtain similar results in variable Lebesgue spaces.\\
	
	The main aim of this paper is to describe the behavior of the commutators of the potential type operators $\tfi^{b,m}$ between variable Lebesgue spaces with different weights, for a wider class of symbols $b$ including Lipschitz symbols and some generalizations.
	Concretely, we prove that a generalized Fefferman-Phong type condition on a pair of weights $u$ and $v$ is sufficient for the boundedness of the commutator $\tfi^{b,m}$, from $\lpw{p}{v}$ into $\lpw{q}{u}$.
	
	When the symbol $b$ belongs to a variable Lipschitz space, we not only consider variable version of power bump conditions, but also we consider weaker norms related to Musielak-Orlicz functions.
	Thus, in this sense, we are providing an improvement.\\
	
	In the definition of $\tfi^{b,m}$, the function $K$ belongs to a certain class of kernels that satisfy that there exists positive constants $\delta$, $c$ and $0\leq\varepsilon<1,$ with the property that
	\begin{equation*}\label{condicion en el nucleo}
	\sup_{2^k<|x|\leq2^{k+1}} K(x)\leq\frac{c}{2^{kn}}\int_{\delta(1-\varepsilon)2^k<|y|\leq2\delta(1+\varepsilon)2^k} K(y)\,dy,
	\end{equation*}
	for all  $k\in\zZ.$ We shall denote this class by  $\mathfrak{D}$.
	
	For example, if $K$ is radial an non-increasing, then $K\in\mathfrak{D}$. 
	A basic example of potential operator with radial and non-increasing kernel $K$ is given by the fractional integral operator $I_\alpha$, which is the convolution with the kernel $K(t)=|t|^{\alpha-n}$, $0<\alpha<n.$ 
	There are other important examples such as the Bessel potential $J_{\beta,\lambda}$, $\beta\,,\lambda>0$ with kernels $K_{\beta,\lambda}$ best defined by means of its Fourier transform by $\widehat{K_{\beta,\lambda}}(\xi)=(\lambda^2+|\xi|^2)^{-\beta/2}$
	and $K_{\beta,\lambda}$ is also radial and non-increasing.
	
	Nevertheless, condition $\mathfrak{D}$ involves other type of kernels $K$ such as radial and non-decreasing functions. Moreover, if $K$ is essentially constant on annuli, that is, $K(y)\leq CK(x)$ for $|y|/2\leq|x|\leq2|y|$, then $K\in\mathfrak{D}$.\\
	
	We will be working in a general context that we now introduce.
	
	Let $\ff{\ex{p}}{\z}{[1,\infty]}$ be a measurable function. For $\mathrm{A}\subset\z$ we define
	$$p^-_\mathrm{A}= \inf_{x\in \mathrm{A}}p(x)\esp\esp\esp p^+_\mathrm{A}= \sup_{x\in \mathrm{A}}p(x).$$
	For simplicity we denote $p^-=p^-_{\z}$ and $p^+=p^+_{\z}$.
	
	With $\ex{p'}$ we denote the conjugate exponent of $\ex{p}$ given by $\ex{p'}=\ex{p}/(\ex{p}-1)$.
	It is not hard to prove that $(p')^-=(p^+)'$ and $(p')^+=(p^-)'$.
	
	We say that $\ex{\alpha}:\z\rightarrow\zR$ is {\em globally log-H\"older continuous} on $\z$ if it satisfies the following inequalities
	\begin{equation*}\label{log1}
	\left|\alpha(x)-\alpha(y)\right|\leq \frac{C}{\log(e+1/|x-y|)}, \,\,x,y\in\z
	\end{equation*}
	and
	\begin{equation}\label{log2}
	\left|\alpha(x)-\alpha_\infty\right|\leq \frac{C}{\log(e+|x|)}, \,\, x\in\z
	\end{equation}
	for some positive constants $C$ and $\alpha_\infty$.
	It is easy to see that the inequality (\ref{log2}) implies that $\lim_{|x|\rightarrow \infty} \alpha(x) = \alpha_\infty$.
	
	We say that $\ex{p}\in\mathcal{P}(\z)$ if $1\le p^-\leq p^+\le\infty$ and we denote by $\mathcal{P}^{{\rm log}}(\z)$ the set of the exponents $\ex{p}\in\mathcal{P}(\z)$ such that $1/\ex{p}$ is globally log-H\"older continuous.    
	If $p\in\mathcal{P}(\z)$ with $p^+<\infty$, then $p\in\mathcal{P}^{{\rm log}}(\z)$ if and only if $p$ is globally log-H\"older continuous.
	
	If $\ex{p}\in\mathcal{P}(\z)$, we define the function
	$$\varphi_{\ex{p}}(y,t)=
	\left\{
	\begin{array}{lc}
	t^{p(y)}, & 1\le p(y) <\infty \\
	\infty\cdot\chi_{(1,\infty)}(t), & p(y)=\infty,
	\end{array}
	\right.$$
	for $t\ge0$ and $y\in\z$, with the convention $\infty\cdot 0=0$, where $\car_{(1,\infty)}$ denote the characteristic function of $(1,\infty)$. 
	Then the variable exponent Lebesgue space $\lpr{p}$ is the set of the measurable functions $f$ defined on $\z$ such that, for some positive $\lambda$,
	$$\int_{\z} \varphi_{\ex{p}}(x,|\fx|/\lambda)\, dx<\infty.$$
	A Luxemburg norm can be defined in $\lpr{p}$ by taking
	$$\normadeflp{f}{p}=\inf\left\lbrace \lambda >0 : \int_{\z} \varphi_{\ex{p}}(x,|\fx|/\lambda)\, dx\leq 1 \right\rbrace.$$    
	By $\lploc{p}$ we denote the space of the functions $f$ such that $f\chi_U\in\lpr{p}$ for every compact set $U\subset\z$.
	
	A locally integrable function $w$ defined in $\z$ which is positive almost everywhere is called a weight.
	For $\ex{p}\in\mathcal{P}(\z)$ we define the weighted variable Lebesgue space $\lpw{p}{w}(\z)$ as the set of the measurable functions $f$ defined on $\z$ such that $fw\in\lp{p}(\z)$, and $\normadeflpw{f}{p}{w}:=\normadeflp{fw}{p}$.
	
	By a cube $Q$ in $\z$ we shall understand a cube with sides parallel to the coordinate axes. The sidelength of $Q$ is denoted by $\ell(Q)$ and $\gamma Q$, $\gamma >0$, denotes the cube concentric with $Q$ and with sidelength $\gamma\ell (Q)$. 
	
	We shall say that $A\simeq B$ if there exist two positive constants $C_1$ and $C_2$ such that $C_1B\leq A\leq C_2 B$.\\

	We define now the functional related with the space where the symbol $b$ belongs. 
	We use $\mathcal{E}$ to denote the class of all cubes $Q$ in $\z$ with sides parallel to the axes and consider a functional $a :\mathcal{E} \rightarrow[0,\infty)$.
	We say that $a$ satisfies the $T_\infty$ condition and we denote by $a\in T_\infty$, if there exists a finite positive constant $t_\infty$ such that for every $Q,Q'\in\mathcal{E}$ such that $Q'\subset Q$,
	\begin{equation}\label{cond t infinito}
	a(Q') \leq t_\infty\, a(Q).
	\end{equation}
	We denote the least constant $t_\infty$ in (\ref{cond t infinito}) by $\|a\|_{t_\infty}$. Clearly, $\|a\|_{t_\infty}\geq 1$.
	
	Let $0 < \varrho < \infty$ and $a\in T_\infty$.
	We say that a function $b\in\lplocc{1}$ belongs to the generalized Lipschitz space $\mathcal{L}^\varrho_a$ if
	\begin{equation}\label{lipschitz a}
	\sup_Q \frac{1}{a(Q)}\left(\frac{1}{|Q|}\int_Q |b-b_Q|^\varrho\,dx\right)^{1/\varrho}<\infty
	\end{equation}
	where the supremum is taken over all cubes $Q\subset\z$ and $b_Q$ denote the average $\frac{1}{|Q|}\int_Q b$ (which sometimes will be denoted by $\fint_Q b$).\\
	
	We are now in position to state our main results.
	
	The next theorem gives a two weighted boundedness result between variable Lebesgue spaces with different exponents for the commutator $\tfi^{b,m}$, when the symbol $b$ belongs to the class $\mathcal{L}^\varrho_a$ defined previously.
	The function $\widetilde{K}$ involved in the condition on the weights is given by
	$$\widetilde{K}(t)=\int_{|z|\leq t} K(z)\,dz.$$
	\\
	
	\begin{teo}\label{teo conm variable con lipschitz generalizado}
		Let  $\ex{p},\ex{q}\inplogdern$ such that $1<p^-\le\ex{p}\leq\ex{q}\le q^+<\infty$, $K\in\mathfrak{D}$ and $m\in\zN \cup\{0\}$.
		Let $1\leq \varrho < \infty$, $a\in T_\infty$ and $b\in\mathcal{L}^\varrho_a$.
		Let $R,S$ be two constants such that $R>(p')^+/(p')^-$ and $S>q^+/q^-$.
		Suppose that $(v,w)$ is any couple of weights such that $v\in \lploc{p}$, $w\in\lploc{Sq}$ 
		and, for some positive constant $\kappa$ and for every cube $Q$,  
		\begin{equation}\label{1.z}
		a(Q)^{m}
		\widetilde{K}(\ell(Q))
		\frac{\normadeflp{\car_Q}{q}}{\normadeflp{\car_Q}{p}}
		\frac{\normadeflp{\car_Qv^{-1}}{Rp'}}
		{\normadeflp{\car_Q}{Rp'}}
		\frac{\normadeflp{\car_Qw}{Sq}}
		{\normadeflp{\car_Q}{Sq}}
		\le \kappa.
		\end{equation}
		Then $$\tfi^{b,m}:\lpw{p}{v}(\z) \hookrightarrow \lpw{q}{w}(\z).$$
		More precisely, 
		$$\normadeflpw{\tfi^{b,m}f}{q}{w}\lesssim\kappa\normadefp{b}{{\mathcal{L}^1_a}}^m\normadeflpw{f}{p}{v}, \forall f\in\lpw{p}{v}(\z).$$\\
	\end{teo}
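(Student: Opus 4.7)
The argument proceeds in three stages: a pointwise sparse control of $\tfi^{b,m}$, a dualization with generalized bumped H\"older inequalities on $\lp{p}$-spaces, and a final summation using (\ref{1.z}).

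\emph{Stage 1 (pointwise sparse control).} Fix $x\in\z$ and split $\z=\bigsqcup_{k\in\zZ}A_k(x)$ with $A_k(x)=\{y:2^k<|x-y|\le 2^{k+1}\}$. The hypothesis $K\in\mathfrak{D}$ allows replacing $K(x-y)$ on $A_k$ by the average $\widetilde{K}(c\,2^{k})/2^{kn}$, giving
$$|\tfi^{b,m}f(x)|\lesssim \sum_{k\in\zZ}\frac{\widetilde{K}(c\,2^{k})}{2^{kn}}\int_{|x-y|\le 2^{k+1}}|b(x)-b(y)|^m|f(y)|\,dy.$$
Taking $Q_k$ centered at $x$ of side $\sim 2^k$, expand $(b(x)-b(y))^m=\sum_{j=0}^m\binom{m}{j}(b(x)-b_{Q_k})^{j}(b_{Q_k}-b(y))^{m-j}$; then $b\in\mathcal{L}^\varrho_a\subset\mathcal{L}^1_a$ together with $a\in T_\infty$ (plus a John-Nirenberg type bound for generalized Lipschitz spaces) controls $|b(x)-b_{Q_k}|^{j}\lesssim\normadefp{b}{\mathcal{L}^1_a}^{j}a(Q_k)^{j}$ on a suitable good set, while the $(b_{Q_k}-b(\cdot))^{m-j}$ factor is absorbed by H\"older on $Q_k$. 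A standard Calder\'on-Zygmund principal-cube construction then produces a sparse family $\mathcal{S}$ with
$$|\tfi^{b,m}f(x)|\lesssim \normadefp{b}{\mathcal{L}^1_a}^m\sum_{Q\in\mathcal{S}} a(Q)^m\,\widetilde{K}(\ell(Q))\,\left(\frac{1}{|Q|}\int_Q|f|\right)\car_Q(x).$$

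\emph{Stage 2 (duality and bumped H\"older).} By duality in $\lp{q}$ choose $h\in\lpr{q'}$ with $\normadeflp{h}{q'}\le 1$ such that $\normadeflpw{\tfi^{b,m}f}{q}{w}\lesssim\int\tfi^{b,m}f\cdot wh\,dx$. Inserting the Stage~1 bound and applying on each $Q\in\mathcal{S}$ the generalized H\"older inequality for variable Lebesgue spaces,
$$\int_Q wh\lesssim\normadeflp{\car_Q w}{Sq}\normadeflp{\car_Q h}{(Sq)'},\qquad \int_Q|f|\lesssim\normadeflp{\car_Q v^{-1}}{Rp'}\normadeflp{\car_Q fv}{(Rp')'}.$$
After multiplying and dividing by the appropriate $\normadeflp{\car_Q}{s}$ factors and using $\normadeflp{\car_Q}{s}\normadeflp{\car_Q}{s'}\simeq|Q|$ for $s\in\{p,p',q,q',Rp',Sq\}$, the $Q$-dependent weight factors reorganize into exactly the left-hand side of (\ref{1.z}), multiplied by local bumped averages of $fv$ and $h$.

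\emph{Stage 3 (apply the hypothesis and sum).} Condition (\ref{1.z}) now extracts $\kappa\,\normadefp{b}{\mathcal{L}^1_a}^m$ uniformly from the sum. The sparseness of $\mathcal{S}$ (disjoint $E_Q\subset Q$ with $|E_Q|\ge|Q|/2$) converts the remaining bilinear sum into an integral against $\widetilde{M}_{(Rp')'}(fv)\cdot\widetilde{M}_{(Sq)'}(h)$, where $\widetilde{M}_s$ denotes the $s$-power maximal. A further H\"older in $\lp{p}$ bounds this by
$$\normadeflp{\widetilde{M}_{(Rp')'}(fv)}{p}\,\normadeflp{\widetilde{M}_{(Sq)'}(h)}{q'}.$$
These bumped maximal operators are bounded on $\lpr{p}$ and $\lpr{q'}$ respectively precisely because $R>(p')^+/(p')^-$ and $S>q^+/q^-$, together with $\ex{p},\ex{q}\inplogdern$. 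Combining the three stages yields $\normadeflpw{\tfi^{b,m}f}{q}{w}\lesssim\kappa\,\normadefp{b}{\mathcal{L}^1_a}^m\normadeflpw{f}{p}{v}$.

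\emph{Main obstacle.} The delicate step is Stage~1: producing a pointwise sparse domination whose per-cube weight is \emph{exactly} $a(Q)^m\widetilde{K}(\ell(Q))$, while simultaneously absorbing the polynomial blow-up of the $m$-th order commutator. It is here that the three structural hypotheses $K\in\mathfrak{D}$, $a\in T_\infty$, and $b\in\mathcal{L}^\varrho_a$ interact nontrivially and where one must invoke a John-Nirenberg estimate for generalized Lipschitz spaces to upgrade the $\varrho$-mean condition to pointwise control on a sparse good set. Once Stage~1 is established, the dualization and bumped H\"older analysis of Stages~2-3 is a by-now standard two-weight template, adapted to the variable-exponent setting.
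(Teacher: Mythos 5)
There is a genuine gap in Stage~1, and it is precisely at the point you yourself flag as the main obstacle. Your claimed pointwise sparse domination
$$|\tfi^{b,m}f(x)|\lesssim \normadefp{b}{\mathcal{L}^1_a}^m\sum_{Q\in\mathcal{S}} a(Q)^m\,\widetilde{K}(\ell(Q))\,\Big(\fint_Q|f|\Big)\car_Q(x)$$
cannot hold in the generality of the theorem. The class $\mathcal{L}^1_a$ with $a\in T_\infty$ contains $BMO$ (take $a\equiv1$), and for a $BMO$ symbol no John--Nirenberg argument upgrades the mean oscillation condition to the pointwise bound $|b(x)-b_{Q}|^{j}\lesssim \normadefp{b}{\mathcal{L}^1_a}^{j}a(Q)^{j}$ on (essentially all of) $Q$: John--Nirenberg only gives exponential decay of the level sets, and the exceptional sets cannot simply be discarded in a pointwise estimate. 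Indeed, if such a domination were true, the commutator $\tfi^{b,m}$ with $b\in BMO$ would inherit every bound satisfied by the sparse operator $\sum_Q\widetilde K(\ell(Q))\fint_Q|f|\,\car_Q$, e.g.\ endpoint weak-type estimates for $[b,I_\alpha]$, which are known to fail. A pointwise absorption of the symbol is available only when $b$ has genuine pointwise Lipschitz control (this is exactly what Lemma \ref{obs teorema delta variable} provides in the proof of Theorem \ref{teo general conmut lipchitz constante}, where the symbol lies in $\zL(\ex{\delta})$); it is not available for general $a\in T_\infty$.

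The paper's proof of Theorem \ref{teo conm variable con lipschitz generalizado} avoids this by never absorbing the oscillation pointwise: it dualizes first, keeps the factors $|b(x)-b_Q|^{m-j}$ and $|b(z)-b_Q|^{j}$ inside the bilinear dyadic sum (over \emph{all} dyadic cubes, with weight $\overline K(\ell(Q)/2)$), applies H\"older with auxiliary exponents $\ex{\omega},\ex{\tau}$ chosen strictly inside the bump range (this is where the room $R>(p')^+/(p')^-$, $S>q^+/q^-$ is used), and absorbs $|b-b_Q|$ only in norm, via the Diening-type estimate of Lemma \ref{Izuki con lipschitz general} together with Lemma \ref{Lemma 3.7 PRa con a general} and Theorem \ref{equivalencia lipschitz con p} -- a norm-level self-improvement that is valid even for $BMO$. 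Moreover, the factor $\widetilde K(\ell(Q))$ does not appear per cube from the start: it arises by summing $\overline K(\ell(Q)/2)|Q||3Q|$ over all dyadic subcubes of principal cubes (inequality \eqref{116}, via Lemma \ref{lemaQ0}), the principal cubes being selected by a stopping time on $\normadeflp{\car_Q gw}{\tau}/\normadeflp{\car_Q}{\tau}$ rather than on averages of $f$. Your Stages~2--3 are close in spirit to the paper's final steps, though note a slip there as well: after applying \eqref{1.z}, the leftover factor $\normadeflp{\car_Q}{p}/\normadeflp{\car_Q}{q}\simeq\normadeflp{\car_Q}{\beta}$ must be attached to the maximal operator acting on the dual function, turning it into the fractional maximal $\mbetas{\beta}{L^{\ex{s}}}$ mapping $\lpr{q'}\to\lpr{p'}$ (Remark \ref{teo max llogl remark}); the pairing $\normadeflp{\widetilde M_{(Rp')'}(fv)}{p}\,\normadeflp{\widetilde M_{(Sq)'}(h)}{q'}$ as you wrote it does not account for this factor. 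To salvage your plan one would either restrict to symbols with pointwise Lipschitz control (recovering essentially the proof of Theorem \ref{teo general conmut lipchitz constante}) or keep the symbol inside the sparse/dyadic form and absorb it in norm, which is what the paper does.
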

	In the classical Lebesgue spaces, a proof can be found in \cite{P} for the case $m=0$, that is, $\tfi^{b,m}=\tfi$; and in \cite{Li} for $m\geq1$ and $b\in BMO=\mathcal{L}^1_a$ where $a(Q)=1$.
	In the variable Lebesgue spaces, when $b\in BMO$ the result above was proved in \cite{MP2}.
	
	Let us observe that, if $a(Q) = |Q|^{\delta/n}$, $0 < \delta < 1$, then $a\in T_\infty$ and it is known that $\mathcal{L}^1_a:=\zL(\delta)$ coincides with the classical Lipschitz spaces $\Lambda_\delta$ define as the set of functions $b$ such that
	$$|b(x)-b(y)|\leq C |x-y|^\delta$$
	for some positive constant $C$ and for every $x,y\in\z$.
	
	On the other hand, if $\ex{r}\inplogdern$ with $r^+<\infty$,
	\begin{equation}\label{delta}
	1<\gamma\leq r^-\leq r^+< \frac{n\gamma}{(n-\gamma)^+} \,\,\,\texto{and}\,\,\, 0\leq\ex{\delta}/n:=1/\gamma-1/r(\cdot)<1/n,
	\end{equation}
	in [\cite{PRa}, Corollary 3.6] it was proved that the functional $a(Q)=|Q|^{1/\gamma-1}\normadeflp{\car_Q}{r'}$ satisfies the $T_\infty$ condition and $\mathcal{L}^1_a=\zL(\ex{\delta})$ are a variable version of the spaces $\zL(\delta)$ defined above.
	Indeed, let us observe that the functional above can be written as
	\begin{align*}
	a(Q)\simeq
	|Q|^{1/\gamma}/\normadeflp{\car_Q}{r}
	\simeq\normadeflp{\car_Q}{n/\delta},
	\end{align*}
	(see Lemmas \ref{p en plog} and \ref{equivalenciabeta}).\\
	
	In the case that $b\in\zL(\ex{\delta})$, we can improve the theorem above in the sense that we can introduce other type of norms in the conditions on the weights involving generalized $\Phi$-functions (G$\Phi$-functions) (see Section \ref{S2} for more information about G$\Phi$-functions).
	In order to state the results we need some definitions.
	
	The norm associated to a given G$\Phi$-function $\Psi$ is given by
	$$\normadeflpl{f}{\Psi}=\inf\left\{\lambda>0:\int_{\z}\Psi\left(x,\frac{|f(x)|}{\lambda}\right)\,dx\leq 1\right\}$$
	and we denote by $L^\Psi(\z)$ the space of functions $f$ such that $\normadeflpl{f}{\Psi}<\infty$.
	
	A corresponding maximal operator associated to $\Psi$ is
	\begin{equation}\label{Mvar}
	M_{\Psi(\cdot,L)} f(x)=\sup_{Q\ni x}\frac{\normadeflpl{\ca f}{\Psi}}{\normadeflpl{\ca}{\Psi}}
	\end{equation}
	and, for $\ex{\beta}\in\mathcal{P}(\z)$, we define the following fractional type version of maximal above as follows
	\begin{equation}\label{Mvarfrac}
	M_{\ex{\beta},\Psi(\cdot,L)} f(x)=\sup_{Q\ni x}\normadeflp{\ca}{\beta}\frac{\normadeflpl{\ca f}{\Psi}}{\normadeflpl{\ca}{\Psi}}.
	\end{equation}
	
	We say that a 3-tuples of G$\Phi$-functions $(A,B,D)$ satisfy condition $\mathcal{F}$ if they verify
	
	\begin{itemnumerado}\label{desigualdad normas}
		$\normadefp{\ca}{A(\cdot,L)}\normadefp{\ca}{B(\cdot,L)}\lesssim\normadefp{\ca}{D(\cdot,L)}$ where $\lesssim$ means that there exists a positive constant $C$ such that \textit{\ref{desigualdad normas}} holds with $\lesssim$ replaced by $\leq C$.
	\end{itemnumerado}
	\begin{itemnumerado}\label{a b y c para holder}
		$A^{-1}(x,t)B^{-1}(x,t)\lesssim D^{-1}(x,t)$ where $A^{-1}$ denote the inverse of $A$ (for the definition of the inverse of a G$\Phi$-function see Section \ref{S2}).
	\end{itemnumerado}
	\begin{itemnumerado}
		$\normadeflpl{\ca}{D}\normadeflpl{\ca}{D^*}\lesssim |Q|$, where $D^*$ is the conjugate function of $D$ (for the definition of the conjugate of a G$\Phi$-function see Section \ref{S2}).\label{D y D conjugada}
	\end{itemnumerado}
	
	Necessary conditions on $D$ where given in [\cite{DHHR}, Remark 4.5.8] and [\cite{HH}, Lemma 4.4.5.] in order to verify \textit{\ref{D y D conjugada}}.
	
	We shall give later some examples of G$\Phi$-functions that satisfy condition $\mathcal{F}$.
	
	We can now give our result.
	
	\begin{teo}\label{teo general conmut lipchitz constante}
		Let  $\ex{p},\ex{q}\inplogdern$ such that $\ex{p}\leq\ex{q}$, $K\in\mathfrak{D}$, $m\in\zN\cup\{0\}$.
		Let $\ex{\beta}$ be a function such that $1/\ex{\beta}=1/\ex{p}-1/\ex{q}$.
		Let $\ex{r}\inplogdern$ and $\ex{\delta}$ defined as in (\ref{delta}), such that $r_\infty\leq\ex{r}$ and let $b\in\zL(\ex{\delta})$.
		Let $(A,B,D)$ and $(E,H,J)$ G$\Phi$-functions satisfying condition $\mathcal{F}$,
		\begin{equation}\label{hipotesis maximal}
		M_{B(\cdot,L)}:L^{\ex{p}}(\z)\rightarrow L^{\ex{p}}(\z)
		\end{equation}
		and
		\begin{equation}\label{hipotesis maximal fraccionaria}
		M_{\ex{\beta},H(\cdot,L)}:L^{\ex{q'}}(\z)\rightarrow L^{\ex{p'}}(\z).
		\end{equation}
		Suppose that $(v,w)$ is any couple of weights such that $v\in\lploc{p}$ and, for some positive constant $\kappa$ and for every cube $Q$,
		\begin{equation}\label{hipotesis FM MO}
		\normadeflp{\car_Q}{n/\delta}^{m}
		\widetilde{K}(\ell(Q))
		\frac{\normadeflp{\car_Q}{q}}{\normadeflp{\car_Q}{p}}
		\frac{\normadefp{\car_Qv^{-1}}{A(\cdot,L)}}
		{\normadefp{\car_Q}{A(\cdot,L)}}
		\frac{\normadefp{\car_Qw}{E(\cdot,L)}}
		{\normadefp{\car_Q}{E(\cdot,L)}}
		\le \kappa.
		\end{equation}
		Then $$\tfi^{b,m}:\lpw{p}{v}(\z) \hookrightarrow\lpw{q}{w}(\z).$$                More precisely, 
		$$\normadeflpw{\tfi^{b,m}f}{q}{w}\lesssim\kappa\normadeflpw{f}{p}{v}, \forall f\in\lpw{p}{v}(\z).$$
		
	\end{teo}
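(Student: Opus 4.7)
The plan is to adapt the sparse/duality argument underlying Theorem~\ref{teo conm variable con lipschitz generalizado}, replacing the power-type bumps $L^{Rp'(\cdot)}$ and $L^{Sq(\cdot)}$ by the Musielak--Orlicz bumps associated with the class $\mathcal{F}$. Since $a(Q)\simeq\normadeflp{\car_Q}{n/\delta}$ is a $T_\infty$ functional with $\mathcal{L}^1_a=\zL(\ex{\delta})$ (as recorded in the excerpt), one has at disposal the John--Nirenberg-type control $\fint_Q|b-b_Q|^j\lesssim a(Q)^j$ and the annular decomposition of $K$ implied by $K\in\mathfrak{D}$. After dualising through $\normadeflpw{\tfi^{b,m}f}{q}{w}\simeq\sup_g\int\tfi^{b,m}f\cdot gw$ with $g$ ranging in the unit ball of $L^{q'(\cdot)}(\z)$, expanding $(b(x)-b(y))^m$ around $b_Q$ on each relevant dyadic scale produces a model sum
$$
\sum_{Q\in\mathcal{S}}a(Q)^m\widetilde{K}(\ell(Q))\left(\frac{1}{|Q|}\int_Q|f|\right)\left(\frac{1}{|Q|}\int_Q|gw|\right)|Q|,
$$
indexed by a sparse family $\mathcal{S}$.

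The improvement over Theorem~\ref{teo conm variable con lipschitz generalizado} is produced by bounding each local average through the generalized Hölder inequality in Musielak--Orlicz spaces granted by (i)--(iii) of $\mathcal{F}$, in the form
$$
\frac{1}{|Q|}\int_Q|f|\,dx\lesssim\frac{\normadefp{\car_Q fv}{B(\cdot,L)}}{\normadefp{\car_Q}{B(\cdot,L)}}\cdot\frac{\normadefp{\car_Q v^{-1}}{A(\cdot,L)}}{\normadefp{\car_Q}{A(\cdot,L)}},
$$
and symmetrically for $|gw|$ via $(E,H,J)$. Inserting these into the model sum, the weight factors regroup into exactly the quantity in~(\ref{hipotesis FM MO}), which is $\le\kappa$, leaving a sparse pairing of the ratios $\normadefp{\car_Q fv}{B(\cdot,L)}/\normadefp{\car_Q}{B(\cdot,L)}$ and $\normadefp{\car_Q gw}{H(\cdot,L)}/\normadefp{\car_Q}{H(\cdot,L)}$ together with the residual geometric factor $\normadeflp{\car_Q}{q}/\normadeflp{\car_Q}{p}\simeq\normadeflp{\car_Q}{\beta}$. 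The first ratio is dominated on $Q$ by $M_{B(\cdot,L)}(fv)$, and the second ratio weighted by $\normadeflp{\car_Q}{\beta}$ is dominated by $M_{\ex{\beta},H(\cdot,L)}(gw)$ from~(\ref{Mvarfrac}). The maximal hypotheses~\eqref{hipotesis maximal} and~\eqref{hipotesis maximal fraccionaria}, combined with Hölder's inequality in $L^{p(\cdot)}\times L^{p'(\cdot)}$, then yield $\normadeflpw{\tfi^{b,m}f}{q}{w}\lesssim\kappa\normadeflpw{f}{p}{v}$ after taking supremum over $g$.

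The main obstacle is the exact cancellation between the geometric ratio $\normadeflp{\car_Q}{q}/\normadeflp{\car_Q}{p}$ in~(\ref{hipotesis FM MO}) and the extra factor $\normadeflp{\car_Q}{\beta}$ inserted by the fractional Orlicz maximal in~(\ref{Mvarfrac}); this is what forces the two maximal operators to land in $L^{p(\cdot)}$ and $L^{p'(\cdot)}$ respectively, so that their product is integrable against the sparse pairing. A secondary technical point is verifying that the binomial terms $|b(x)-b_Q|^j$ arising when $x$ lies outside $Q$ can still be absorbed into $a(Q)^j$ after iteration through the sparse tree; this relies on the $T_\infty$ property of $a$ together with a careful dyadic control of the Lipschitz oscillation, and is where the extension to the class $\zL(\ex{\delta})$ beyond $BMO$ demands real care rather than a verbatim adaptation of the proof of Theorem~\ref{teo conm variable con lipschitz generalizado}.
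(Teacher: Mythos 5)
Your overall route---dualizing against $g$ in the unit ball of $\lpr{q'}$, reducing to a Calder\'on--Zygmund/sparse sum, splitting the averages of $f$ and of $gw$ by the Musielak--Orlicz H\"older inequality coming from condition $\mathcal{F}$, absorbing the weight ratios into \eqref{hipotesis FM MO}, and finishing with the disjoint sets, the maximal operators $M_{B(\cdot,L)}$ and $\mbetas{\beta}{H(\cdot,L)}$, and H\"older in $L^{\ex{p}}\times L^{\ex{p'}}$---is exactly the paper's. The genuine gap is at the step that produces your model sum $\sum_{Q}a(Q)^m\widetilde K(\ell(Q))\left(\fint_{3Q}f\right)\left(\fint_{Q}gw\right)|Q|^2$. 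From the kernel estimate each dyadic cube contributes terms of the form $\int_{3Q}|b(z)-b_Q|^{j}f(z)\,dz\int_{Q}|b(x)-b_Q|^{m-j}g(x)w(x)\,dx$, and to replace these by $a(Q)^m|3Q|\,|Q|\left(\fint_{3Q}f\right)\left(\fint_Q gw\right)$ you need a \emph{pointwise} bound $\sup_{z\in 3Q}|b(z)-b_Q|\lesssim a(Q)$, not the John--Nirenberg-type average control $\fint_Q|b-b_Q|^{j}\lesssim a(Q)^{j}$ that you invoke: averages of the oscillation do not control the oscillation inside integrals weighted by $f$ or $gw$, and H\"olderizing to remove it re-introduces bump norms of $|b-b_Q|^{j}f$ and destroys the plain averages the rest of your argument relies on (that is precisely what the proof of Theorem \ref{teo conm variable con lipschitz generalizado} does, and why its weight condition has a different form). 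Your proposed remedy ($T_\infty$ plus iteration through the sparse tree) does not supply such a sup bound. The paper's resolution is specific to $b\in\zL(\ex{\delta})$ with $r_\infty\le\ex{r}$: Lemma \ref{condicion puntual lip variable } gives $|b(x)-b(z)|\lesssim|x-z|^{\delta(x)}$, and combined with Corollary \ref{corollary 3.5} this yields $|b(z)-b_Q|\lesssim\normadefp{\ca}{n/\delta}$ for every $z\in kQ$ (Lemma \ref{obs teorema delta variable}); this collapses the whole binomial sum pointwise and produces exactly the model sum with the factor $\normadefp{\car_Q}{n/\delta}^{m}$ appearing in \eqref{hipotesis FM MO}.

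A secondary inaccuracy: once the weight factors have been regrouped into \eqref{hipotesis FM MO}, the residual ratio must involve $g$ alone, i.e. $\normadeflpl{\car_Q g}{H}/\normadeflpl{\car_Q}{H}$, since $w$ has already been paired with $E$; writing $\mbetasf{\beta}{H(\cdot,L)}{(gw)}$ both double-counts $w$ and makes hypothesis \eqref{hipotesis maximal fraccionaria} inapplicable, because it is $g$, not $gw$, that lies in the unit ball of $\lpr{q'}$. With the pointwise symbol estimate supplied and this bookkeeping corrected, your argument coincides with the paper's proof.
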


	Let us give some examples of G$\Phi$-functions that satisfy the hypothesis of the theorem above.
	
	Notice first that, if we consider $\ex{p}\in\mathcal{P}(\z)$ and $\ex{q}$ with $q^+<\infty$, then
	$\Psi(x,t)=t^{p(x)}(\log(e+t))^{q(x)}$, $x\in\z,$ $t\ge0$,
	is a G$\Phi$-function.
	In this case, the space $L^\Psi(\z)$ will be denoted by $L^{\ex{p}}(\log L)^{\ex{q}}(\z)$.
	In [\cite{MMO}, Proposition 2.5] the authors proved that the Hardy-Littlewood maximal operator $M$ is bounded in this space when $\ex{p}\inplogdern$ with $1<p^-\le p^+<\infty$, and $\ex{q}\in\mathcal{P}^{{\rm loglog}}(\z)$.	
	We say that $\ex{q}\in\mathcal{P}^{{\rm loglog}}(\z)$ if $\ff{\ex{q}}{\z}{\zR}$ with $q^+<\infty$ such that, for some positive constant $C$, it satisfies the following inequality
	\begin{equation*}\label{loglog}
	|q(x)- q(y)|\leq \frac{C}{\log(e+\log(e+1/|x-y|))}, \textrm{ for every } x,y\in\z.
	\end{equation*}
	\medspace	
	\begin{remark}\label{plog y ploglog}
		Note that if $\ex{p}\inplogdern$ with $p^+<\infty$ and $\ex{q}\in\mathcal{P}^{{\rm loglog}}(\z)$, then $\ex{(pq)}\in\mathcal{P}^{{\rm loglog}}(\z)$ and $\ex{(q/p)}\in\mathcal{P}^{{\rm loglog}}(\z)$.
		Indeed, for every $x,y\in\z$,
		\begin{align*}
		|p(x)q(x)-p(y)q(y)|
		&\leq |p(x)||q(x)-q(y)|+|q(y)||p(x)-p(y)|\\
		&\lesssim \frac{p^+}{\log(e+\log(e+1/|x-y|))} + \frac{q^+}{\log(e+1/|x-y|)}\\
		&\lesssim \frac{1}{\log(e+\log(e+1/|x-y|))}.
		\end{align*}
		This gives $\ex{(pq)}\in\mathcal{P}^{{\rm loglog}}(\z)$. 
		Since $(1/p)^+<\infty$, $\ex{(q/p)}\in\mathcal{P}^{{\rm loglog}}(\z)$ follows from the first property.
	\end{remark}
	\medspace
	\textit{Examples.}
	Let $\ex{p}\inplogdern$ with $1<p^-\le p^+<\infty$ and $\sigma>(p')^+/(p')^-$.
	The following G$\Phi$-functions satisfy condition $\mathcal{F}$ and the hyphoteses \eqref{hipotesis maximal} and \eqref{hipotesis maximal fraccionaria} of the Theorem \ref{teo general conmut lipchitz constante}.
	\begin{ej}\label{ejemplo 1}
		$A_1(x,t)=t^{\sigma p'(x)}(\log(e+t))^{\sigma p'(x)}$,
		$B_1(x,t)=t^{(\sigma p')'(x)}$ and
		$D_1(t)=t \log(e+t)$.
	\end{ej}
	\begin{ej}\label{ejemplo 2}
		If, in addition, $\ex{\mu}\inplogdern$ with $1<\mu^-\le\mu^+<\infty$ such that 
		\begin{equation*}
		1/\sigma\ex{p'}-1/\ex{\mu}>\epsilon
		\end{equation*}
		for some constant $\epsilon\in(0,1)$ and $\ex{\nu}\in\mathcal{P}^{{\rm loglog}}(\z)$ then, the example is given by
		$A_2(x,t)=t^{\mu(x)}(\log(e+t))^{\nu(x)\mu(x)}$, $B_2(x,t)=t^{(\sigma p')'(x)}$ and $D_2(x,t)=t^{\alpha(x)} (\log(e+t))^{\alpha(x)\nu(x)}$ where $\ex{\alpha}$ is defined by $1/\ex{\alpha}=1/\ex{\mu}+1/\ex{(\sigma p')'}.$
	\end{ej}
	In Section $\ref{S3}$ we check these examples.
	\bigskip
	
	The paper is organized as follows.
	In Section \ref{S2} we introduce basic definitions and known results related to Musielak-Orlicz spaces.
	We also give some boundedness estimates in this context.
	In Section \ref{S3} we prove a key estimate regarding the $\lp{p}(\log L)^{\ex{q}}(\z)$ norm of $\ca$ for $Q\in\mathcal{E}$, using a series of auxiliary lemmas that we prove as well. 
	We also discuss the validity of Examples \ref{ejemplo 1} and \ref{ejemplo 2}.
	Finally, in Section \ref{S4} we prove Theorem 1.1 and Theorem 1.2.

\section{Preliminaries}\label{S2}
	In this section we give some previous definitions and results that we shall be using throughout this paper.
	
	With $\mathcal{M}$ we denote the set of all Lebesgue real valued, measurable functions on $\z$.
	
	A convex function $\psi:[0,\infty)\rightarrow[0,\infty]$ with
	$\psi(0) = 0$,
	$\lim_{t\rightarrow 0^+}\psi(t) = 0$ and $\lim_{t\rightarrow \infty}\psi(t) =\infty$ is called a $\Phi$-function.
	
	A real function $\Psi:\z\times[0,\infty)\rightarrow[0,\infty]$ is said to be a generalized $\Phi$-function (G$\Phi$-function), and we denote $\Psi\in G\Phi(\z)$,
	if $\Psi(x,t)$ is Lebesgue-measurable in $x$ for every $t\geq0$ and $\Psi(x,\cdot)$ is a $\Phi$-function
	for every $x\in\z$.
	
	If $\Psi\in G\Phi(\z)$, then the set
	$$L^\Psi(\z):=\left\{ f\in\mathcal{M}: \int_{\z} \Psi\left(x,|f(x)|\right)\,dx<\infty\right\}$$
	defines a Banach function space equipped with the Luxemburg-norm given by
	$$\normadeflpl{f}{\Psi}:=\inf\left\{ \lambda>0: \int_{\z} \Psi\left(x,\frac{|f(x)|}{\lambda}\right)\,dx\leq1\right\}.$$
	The space $L^\Psi(\z)$ is called a Musielak-Orlicz space.
	
	Let $\ex{p}\in\mathcal{P}(\z)$, then $\Psi(x,t)=t^{p(x)}\in G\Phi(\z)$.
	In this case, the space $L^\Psi(\z)$ is the variable exponent Lebesgue space $L^{\ex{p}}(\z)$ defined in the introduction.
	If we also consider $\ex{r}$ with $r^+<\infty$, then $\Psi(x,t)=t^{p(x)}(\log(e+t))^{r(x)}\in G\Phi(\z)$.
	In this case, the space $L^\Psi(\z)$ is the space $\lp{p}(\log L)^{r(\cdot)}(\z)$ introduced before.
	
	Let $\Psi\in G\Phi(\z)$, then for any $x\in\z$ we denote by $\Psi^*(x,\cdot)$ the conjugate function of $\Psi(x,\cdot)$ which is defined by
	$$\Psi^*(x,u)=\sup_{t\geq0}(tu-\Psi(x,t)),\esp  u\geq 0.$$
	For $\Psi\in G\Phi(\z)$ that verifies that every simple function belongs to $L^{\Psi^*}(\z)$,  we have the following norm conjugate formula,
	\begin{equation}\label{dual}
	\normadeflpl{f}{\Psi}\simeq \sup_{\normadeflpl{g}{\Psi^*}\leq1}\int_{\z} |f(x)g(x)|\,dx
	\end{equation}
	for every function $f\in L^{\Psi}(\z)$ (see [\cite{DHHR}, Corollary 2.7.5]).
	
	The following lemma can be deduced from Lemma 4.4.5 in \cite{HH}.
	\begin{lema}\label{HH 4.4.5 }
		Let $\psi$ a $\Phi$-function, then the following inequality 
		$$\normadefp{\ca}{\psi}\normadefp{\ca}{\psi^*}\lesssim |Q|$$
		holds for every cube $Q$ in $\z$.		 
	\end{lema}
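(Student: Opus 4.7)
The plan is to derive the inequality from an explicit computation of $\normadefp{\ca}{\psi}$ together with the classical inverse-duality estimate for $\Phi$-functions.

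First, I would compute the Luxemburg norm of $\ca$ directly. Since $\psi$ does not depend on the spatial variable,
$$\int_{\z}\psi\!\left(\frac{\ca(x)}{\lambda}\right)\,dx \;=\; |Q|\,\psi\!\left(\frac{1}{\lambda}\right),$$
and this is $\leq 1$ if and only if $\psi(1/\lambda) \leq 1/|Q|$, i.e.\ $\lambda \geq 1/\psi^{-1}(1/|Q|)$, where $\psi^{-1}(s):=\inf\{t\geq 0:\psi(t)\geq s\}$ denotes the generalized left-continuous inverse. Taking the infimum in the Luxemburg definition gives $\normadefp{\ca}{\psi}=1/\psi^{-1}(1/|Q|)$, and the same argument applied to $\psi^{*}$ yields $\normadefp{\ca}{\psi^*}=1/\psi^{*-1}(1/|Q|)$. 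Consequently,
$$\normadefp{\ca}{\psi}\normadefp{\ca}{\psi^*} \;=\; \frac{1}{\psi^{-1}(1/|Q|)\,\psi^{*-1}(1/|Q|)}.$$

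Next, I would invoke the classical inequality
$$s \;\leq\; \psi^{-1}(s)\,\psi^{*-1}(s) \;\leq\; 2s, \qquad s>0,$$
valid for every $\Phi$-function paired with its conjugate. The upper bound is a one-line consequence of Young's inequality $ab\leq \psi(a)+\psi^{*}(b)$ applied with $a=\psi^{-1}(s)$ and $b=\psi^{*-1}(s)$; the lower bound, which is the direction we actually need, follows from the Legendre-type identity $\psi(u)+\psi^{*}(\psi'_{+}(u)) = u\,\psi'_{+}(u)$ together with the defining property of $\psi^{*-1}$. Specializing the lower bound at $s=1/|Q|$ and combining with the previous display yields
$$\normadefp{\ca}{\psi}\normadefp{\ca}{\psi^{*}} \;\leq\; |Q|,$$
as claimed.

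The main obstacle is making the lower bound in the inverse-duality inequality rigorous when $\psi$ is not strictly monotone, not continuous, or may take the value $+\infty$; this requires careful bookkeeping with the left- and right-continuous versions of $\psi^{-1}$ and $\psi^{*-1}$. Since this is standard and is precisely what Lemma~4.4.5 of \cite{HH} handles, the shortest route is simply to view the $\Phi$-function $\psi$ as the $x$-independent G$\Phi$-function $\Psi(x,t):=\psi(t)$ and apply that lemma directly, obtaining the conclusion as an immediate specialization.
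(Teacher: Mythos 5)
Your argument is sound in substance, but it takes a genuinely different route from the paper: the paper gives no proof at all, merely noting that the lemma ``can be deduced from Lemma 4.4.5 in [HH]'', which is exactly the fallback you invoke in your last paragraph. Your explicit route --- computing $\normadefp{\ca}{\psi}$ from the $x$-independent modular and pairing it with the inverse-duality estimate --- works, with two small repairs. First, the identity $\normadefp{\ca}{\psi}=1/\psi^{-1}(1/|Q|)$ with the left-continuous inverse $\psi^{-1}(s)=\inf\{t\ge0:\psi(t)\ge s\}$ is not exact if $\psi$ has a flat piece at level exactly $1/|Q|$ (the exact value involves $\sup\{t:\psi(t)\le 1/|Q|\}$); however, the one-sided bound $\normadefp{\ca}{\psi}\le 1/\psi^{-1}(1/|Q|)$ always holds, and that is all your argument uses. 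Second, you correctly identify that the bound you need is the lower one, $s\le\psi^{-1}(s)(\psi^*)^{-1}(s)$ --- note that the paper's own inequality \eqref{inversas y conjugadas} is the opposite (upper) estimate and is of no use here --- and this lower bound can be obtained without $\psi'_+$ or the Legendre identity, hence without any smoothness, strict monotonicity, or finiteness of $\psi$: since $t\mapsto\psi(t)/t$ is nondecreasing and $\psi(u)\ge s$ for every $u>a:=\psi^{-1}(s)$, one gets $\psi(u)\ge (u/a)\,s$ for $u>a$, so for any $\sigma<s/a$, $\psi^*(\sigma)=\sup_{u\ge0}(\sigma u-\psi(u))\le \sigma a<s$, i.e. $(\psi^*)^{-1}(s)\ge s/a$. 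With these two adjustments your computation yields the lemma with constant $1$, which is more self-contained (and slightly sharper) than the paper's bare citation; what the citation buys is only brevity.
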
	
	Also we can define $\Psi^{-1}$, the generalized inverse function of $\Psi$, by
	$$\Psi^{-1}(x,t):=\inf\{u\geq0: \Psi(x,u)\ge t\},\esp  x\in\z, t\geq 0.$$	
	For example, if $p\in\mathcal{P}(\z)$ and $\Psi(x,t)=t^{p(x)}$, $\Psi^{-1}(x,t)=t^{1/p(x)}$ and $\Psi^*(x,t)=t^{p'(x)}$.
	
	Note that, by definition of $\Psi^*$,  the following generalization of the Young's inequality holds in this context,
	\begin{equation}\label{young}
	vu\leq \Psi(x,v)+\Psi^*(x,u),\esp \forall x\in\z, \forall v,u\geq 0,
	\end{equation}
	for any $\Psi\in G\Phi(\z)$.
	If we put $v=\Psi^{-1}(x,t)$ and $u=(\Psi^*)^{-1}(x,t)$ in equation \eqref{young} we obtain 
	\begin{equation}\label{inversas y conjugadas}
	\Psi^{-1}(x,t)(\Psi^*)^{-1}(x,t)
	\leq \Psi(x,\Psi^{-1}(x,t))+\Psi^*(x,(\Psi^*)^{-1}(x,t))
	\le 2t.
	\end{equation}
	Moreover, it can be proved that if $\Psi,\Lambda,\Theta\in G\Phi(\z)$ such that $\Psi(x,\cdot),\Lambda(x,\cdot)$ are strictly increasing and 
	$\Psi^{-1}(x,t)\Lambda^{-1}(x,t)\leq\Theta^{-1}(x,t)$ for every $x\in\z$, and for every $t\geq 0$,
	then
	$$\Theta(x,tu)\leq \Psi(x,t)+\Lambda(x,u),\esp  \forall x\in\z, \forall t,u\geq 0.$$
	The inequality above allows us to prove the following generalized H\"older type inequality in this context.
	The proof is standard and we omit it.
	
	\begin{lema}
		Let $\Psi,\Lambda,\Theta\in G\Phi(\z)$ such that $\Psi(x,\cdot),\Lambda(x,\cdot)$ are strictly increasing and
		$$\Psi^{-1}(x,t)\Lambda^{-1}(x,t)\leq\Theta^{-1}(x,t),\esp  \forall x\in\z, \forall t\geq 0.$$
		Then
		\begin{equation}\label{holdermusie}
		\normadeflpl{fg}{\Theta} \lesssim\normadeflpl{f}{\Psi}\normadeflpl{g}{\Lambda}	
		\end{equation}for all $f\in L^\Psi(\z)$ and $g\in L^{\Lambda}(\z)$.
	\end{lema}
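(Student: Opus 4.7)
The plan is to rely on the pointwise inequality
$$\Theta(x, tu) \le \Psi(x, t) + \Lambda(x, u), \qquad x\in\z,\ t,u\ge 0,$$
which the excerpt established as a consequence of $\Psi^{-1}(x,t)\Lambda^{-1}(x,t)\le \Theta^{-1}(x,t)$ (strict monotonicity of $\Psi(x,\cdot)$ and $\Lambda(x,\cdot)$ makes the substitutions $t\mapsto \Psi(x,t)$ and $u\mapsto \Lambda(x,u)$ licit in the preliminary Young-type argument). The remainder is a routine homogenization and convexity argument.

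First I would dispose of the trivial cases: if $\normadeflpl{f}{\Psi}=0$ or $\normadeflpl{g}{\Lambda}=0$, then $fg=0$ a.e.\ and there is nothing to prove, and if either is $+\infty$ the inequality is vacuous. Assume then that both norms are positive and finite, and set $\lambda_1=\normadeflpl{f}{\Psi}$ and $\lambda_2=\normadeflpl{g}{\Lambda}$. By the very definition of the Luxemburg norm,
$$\int_{\z}\Psi(x,|f(x)|/\lambda_1)\,dx\le 1 \qquad \text{and} \qquad \int_{\z}\Lambda(x,|g(x)|/\lambda_2)\,dx\le 1.$$

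Applying the pointwise inequality with $t=|f(x)|/\lambda_1$ and $u=|g(x)|/\lambda_2$ and integrating gives
$$\int_{\z}\Theta\!\left(x,\frac{|f(x)g(x)|}{\lambda_1\lambda_2}\right)dx\le 2.$$
Since $\Theta(x,\cdot)$ is convex with $\Theta(x,0)=0$, one has $\Theta(x,s/2)\le \Theta(x,s)/2$, so replacing $\lambda_1\lambda_2$ by $2\lambda_1\lambda_2$ forces the modular to be $\le 1$, i.e.
$$\normadeflpl{fg}{\Theta}\le 2\,\normadeflpl{f}{\Psi}\normadeflpl{g}{\Lambda},$$
which is the desired $\lesssim$ estimate.

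No step really presents an obstacle: everything is mechanical once the pointwise Young-type inequality stated in the excerpt is available. The only mildly delicate point is ensuring that the strict monotonicity hypothesis on $\Psi(x,\cdot)$ and $\Lambda(x,\cdot)$ suffices to pass from the inverse inequality to its dual form $\Theta(x,tu)\le \Psi(x,t)+\Lambda(x,u)$, but this is precisely the monotonicity assumption placed in the hypothesis of the lemma. Consequently, the constant produced is $2$, well within the $\lesssim$ claimed.
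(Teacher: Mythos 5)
Your proof is correct and is precisely the ``standard'' argument the paper alludes to (and omits): normalize by the Luxemburg norms, apply the pointwise Young-type inequality $\Theta(x,tu)\le\Psi(x,t)+\Lambda(x,u)$ stated just before the lemma, integrate, and absorb the resulting modular bound $2$ by convexity, yielding the constant $2$. The only cosmetic caveat is that for a general G$\Phi$-function the modular at $\lambda=\normadeflpl{f}{\Psi}$ need not be $\le 1$ if $\Psi(x,\cdot)$ is not left-continuous, but this is repaired by working with $\lambda_i=(1+\epsilon)$ times the norms and letting $\epsilon\to0$, which does not change the conclusion.
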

	
	For example, if $\Theta(x,t)=t^{s(x)}$, $\Psi(x,t)=t^{p(x)}$ and $\Lambda(x,t)=t^{q(x)}$ with $\ex{s},\ex{p},\ex{q}\in\mathcal{P}(\z)$ and $1/\ex{s}=1/\ex{p}+1/\ex{q}$, we obtain that
	\begin{equation}\label{holderrpq}
	\normadeflp{fg}{s}\lesssim\normadeflp{f}{p}\normadeflp{g}{q}.
	\end{equation}
	In the case $\ex{s}\equiv1$ inequality \eqref{holderrpq} becomes
	\begin{equation}\label{holderpp}
	\int_{\z}|f(y)g(y)|\,dy\lesssim\normadeflp{f}{p}\normadeflp{g}{p'}
	\end{equation}
	and, for a general $\Psi\in G\Phi(\z)$ such that $\Psi(x,\cdot)$ is strictly increasing, from inequality \eqref{inversas y conjugadas} we obtain
	\begin{equation}\label{holder fi y fi conjugada}
	\int_{\z}|f(y)g(y)|\,dy
	\lesssim\normadeflpl{f}{\Psi}\normadeflpl{g}{\Psi^*},
	\end{equation} 
	which is an extension of the classical H\"{o}lder inequality (see \cite{DHHR}). \\

	Particularly, when we deal with variable Lebesgue spaces, we have the following known results that we shall be using along this paper.
	
	\begin{lema}[\cite{DHHR}, Lemma 3.4.2]\label{modular y norma}
		Let  $\ex{p}\in \mathcal{P}(\z)$ with $p^+<\infty$.
		Then
		$$\normadeflp{f}{p}\leq C_1 
		\esp\texto{ if and only if }\esp 
		\int_{\z} |f(x)|^{p(x)}\,dx\leq C_2.$$
		Moreover, if either constant equals 1 we can take the other equal to 1 as well.
	\end{lema}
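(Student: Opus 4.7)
The plan is to work directly from the definition of the Luxemburg norm and exploit the modular $\rho(f) := \int_{\z}|f(x)|^{p(x)}\,dx$. The "moreover" clause is the heart of the matter, so I would prove the $C_1 = C_2 = 1$ equivalence first and then deduce the general statement by scaling, using the assumption $p^+<\infty$ at the step where one must pass between the norm and the modular.

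For the moreover clause: if $\rho(f)\le 1$, then $\lambda=1$ is admissible in the infimum defining $\normadeflp{f}{p}$, so $\normadeflp{f}{p}\le 1$. For the converse, if $\normadeflp{f}{p}\le 1$, then for every $\epsilon>0$ the value $\lambda = 1+\epsilon$ is admissible, so
\[
\int_{\z}\left(\frac{|f(x)|}{1+\epsilon}\right)^{p(x)}\,dx\le 1.
\]
Since the integrands increase pointwise to $|f(x)|^{p(x)}$ as $\epsilon\to 0^+$, monotone convergence yields $\rho(f)\le 1$. This is the only analytic ingredient; everything else is elementary manipulation.

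For the general quantitative statement I would scale. Suppose $\normadeflp{f}{p}\le C_1$. Given any $C_1' > C_1$, the moreover clause applied to $f/C_1'$ gives $\int_{\z}(|f(x)|/C_1')^{p(x)}\,dx\le 1$, and then $(C_1')^{p(x)} \le \max(1,(C_1')^{p^+})$ (here I use $p^+<\infty$) gives $\rho(f)\le \max(1,(C_1')^{p^+})$; letting $C_1'\downarrow C_1$ fixes $C_2 = \max(1,C_1^{p^+})$. Conversely, if $\rho(f)\le C_2$, choose $\lambda = \max(C_2^{1/p^-},C_2^{1/p^+})$ so that $\lambda^{p(x)}\ge C_2$ for every $x$; then $\int_{\z}(|f(x)|/\lambda)^{p(x)}\,dx\le 1$, so $\normadeflp{f}{p}\le \lambda$, giving $C_1$ in terms of $C_2$.

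The main potential obstacle, if one insists on being careful, is the monotone-convergence step in the converse direction of the moreover clause: one needs $p^+<\infty$ (or at least that $p(x)<\infty$ a.e.) to ensure pointwise convergence of $(|f|/(1+\epsilon))^{p(x)}\nearrow |f|^{p(x)}$ without having to treat the $p(x)=\infty$ set separately. Beyond that, everything is routine and the assertion is essentially built into the definition of the Luxemburg norm.
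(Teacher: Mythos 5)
Your argument is correct: the unit-ball equivalence via admissibility of $\lambda=1$ in one direction and monotone convergence (using $p^+<\infty$) in the other, followed by the scaling argument with $\max(1,C_1^{p^+})$ and $\max(C_2^{1/p^-},C_2^{1/p^+})$, is exactly the standard norm--modular proof; the paper itself gives no proof but cites [DHHR, Lemma 3.4.2], whose argument is essentially the same. The only cosmetic remark is that the detour through $C_1'>C_1$ is unnecessary, since $\normadeflp{f}{p}\le C_1$ already gives $\normadeflp{f/C_1}{p}\le 1$ and the unit-ball property applies directly.
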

	\medskip
	
	The following lemma describes some properties of the exponent in $\mathcal{P}^{\rm log}(\z)$.
	\begin{lema}\label{propiedades plog}
		Let $\ex{p},\ex{q}\inplogdern$ and $c\in\zR$ such that $c\geq1/p^-$, then the following properties hold: \\		
		\rm (i) $\ex{cp}\inplogdern$.\\		
		\rm (ii) $\ex{p'}\inplogdern$.\\
		\rm (iii) If $\ex{\alpha}$ is the exponent defined by $1/\ex{\alpha}=1/\ex{p}+1/\ex{q}$, then $\ex{\alpha}\inplogdern$.\\		
		\rm (iv) If, in addition, $p^+,q^+<\infty$, then $\ex{(pq)}\inplogdern$.
	\end{lema}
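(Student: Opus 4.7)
In each of the four items the conclusion $\inplogdern$ reduces to verifying, for the reciprocal of the new exponent, the two globally log-H\"older bounds (local and decay \eqref{log2}), while membership in $\mathcal{P}(\z)$ is read off directly from the algebraic relation between the exponents. The plan is to write down, in each case, an explicit identity expressing the reciprocal of the target exponent in terms of $1/\ex{p}$ and $1/\ex{q}$, and then to deduce both log-H\"older estimates from the triangle inequality applied to the log-H\"older hypotheses on $1/\ex{p}$ and $1/\ex{q}$.

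For (i), I would use the identity $1/(cp(x))=c^{-1}\cdot 1/p(x)$: the local increments and the deviation from the limit at infinity are merely scaled by $c^{-1}$, so the log-H\"older constants for $1/(cp)$ are those of $1/\ex{p}$ scaled by $c^{-1}$. The assumption $c\ge 1/p^-$ is used only to guarantee $(cp)^-\ge 1$, so that $cp\in\mathcal{P}(\z)$. For (ii), the identity $1/p'(x)=1-1/p(x)$ (valid with the conventions $1/\infty=0$, $1/1=1$) gives $|1/p'(x)-1/p'(y)|=|1/p(x)-1/p(y)|$, and, with $(p')_\infty$ defined by $1/(p')_\infty=1-1/p_\infty$, the same equality holds for the decay; the inequality $(p')^-=(p^+)'\ge 1$ then handles membership in $\mathcal{P}(\z)$.

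For (iii), the identity $1/\ex{\alpha}=1/\ex{p}+1/\ex{q}$ combined with the triangle inequality yields both log-H\"older estimates for $1/\ex{\alpha}$ as the sums of those for $1/\ex{p}$ and $1/\ex{q}$, where one takes $1/\alpha_\infty=1/p_\infty+1/q_\infty$. For (iv), the hypothesis $p^+,q^+<\infty$ first ensures $(pq)^+<\infty$; then writing $1/(pq)(x)=(1/p(x))(1/q(x))$ and applying the standard identity $ab-cd=a(b-d)+d(a-c)$ together with the bounds $1/p,1/q\le 1$ yields both log-H\"older estimates for $1/(pq)$ as the sum of the corresponding estimates for $1/\ex{p}$ and $1/\ex{q}$, with $1/(pq)_\infty=(1/p_\infty)(1/q_\infty)$.

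The only point requiring a little care is the boundary behaviour when $p$ or $q$ attains the values $1$ or $\infty$: one has to check that each of the reciprocal identities above remains consistent under the conventions $1/\infty=0$ and $1/0=\infty$. Beyond this bookkeeping the whole argument reduces to the triangle inequality applied to the given log-H\"older bounds, so I do not anticipate a genuine technical obstacle.
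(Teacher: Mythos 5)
The paper states Lemma \ref{propiedades plog} without proof, treating these as standard facts (they can essentially be extracted from \cite{DHHR}), so there is no argument of the authors to compare against; judged on its own, your proof is the standard one and is essentially correct. In each item you pass to reciprocals, use the exact identities $1/(cp)=c^{-1}(1/p)$, $1/p'=1-1/p$, $1/\alpha=1/p+1/q$, $1/(pq)=(1/p)(1/q)$ (consistent with the conventions at the endpoints), and the triangle inequality -- together with $0\le 1/p,\,1/q\le 1$ in (iv) -- to transfer both log-H\"older conditions, which is exactly what is needed; moreover $c\ge 1/p^-$ gives $(cp)^-\ge1$, $(p')^-=(p^+)'\ge1$, and $(pq)^-\ge p^-q^-\ge1$ with $(pq)^+\le p^+q^+<\infty$, so (i), (ii), (iv) are complete. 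The one caveat concerns item (iii), and it is a defect of the statement rather than of your argument: your opening claim that membership in $\mathcal{P}(\z)$ is read off directly from the algebraic relation fails there, since $1/\alpha=1/p+1/q$ may exceed $1$ (take $p=q\equiv 3/2$, so $\alpha\equiv 3/4$), hence $\alpha^-\ge1$ is not automatic. What your argument genuinely proves in (iii) is the global log-H\"older continuity of $1/\alpha$ with $1/\alpha_\infty=1/p_\infty+1/q_\infty$; the missing condition $\alpha^-\ge1$ holds in every application made in the paper (e.g.\ $1/\omega=1/s+1/\mu$ with $\mu>(s')^+$ in the proof of Theorem \ref{teo conm variable con lipschitz generalizado}, and Example \ref{ejemplo 2}, where $\alpha^->1$ is verified separately), so either the hypothesis $1/\ex{p}+1/\ex{q}\le1$ should be added or the conclusion of (iii) read as the log-H\"older estimate only.
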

	
	The next lemma can be deduced from the Corollary 4.5.9 in \cite{DHHR}.
	\begin{lema}[\cite{DHHR}]\label{p en plog}
		Let  $\ex{p}\inplogdern$.
		Then there exists two positive constants $C_p^*$ and $C_p^{**}$ such that 
		$$|Q|\le C_p^*\normadeflp{\car_Q}{p}\normadeflp{\car_Q}{p'} \leq C_p^{**}|Q|,$$
		for every cubes $Q\subset\z$. Note that we can suppose $C_p^*,C_p^{**}\ge1$.
	\end{lema}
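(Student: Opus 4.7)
The first inequality $|Q|\le C_p^*\normadeflp{\car_Q}{p}\normadeflp{\car_Q}{p'}$ is immediate from the generalized H\"older inequality \eqref{holderpp} applied with $f=g=\car_Q$:
$$|Q|=\int_{\z}\car_Q(x)\car_Q(x)\,dx\lesssim\normadeflp{\car_Q}{p}\normadeflp{\car_Q}{p'}.$$
The resulting constant depends only on the constant in \eqref{holderpp}, hence is independent of $Q$ and can be taken $\ge 1$ after enlarging if necessary.

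For the reverse inequality the plan is to establish first the sharp scaling
$$\normadeflp{\car_Q}{p}\simeq |Q|^{1/p_Q},$$
where $p_Q$ is a suitable averaged value of $p$ on $Q$ (for instance, the harmonic average $\fint_Q 1/p$). Since $\ex{p'}\inplogdern$ by Lemma \ref{propiedades plog}(ii), the same scaling holds for $p'$ with $(p')_Q=(p_Q)'$. Multiplying the two estimates yields
$$\normadeflp{\car_Q}{p}\normadeflp{\car_Q}{p'}\simeq |Q|^{1/p_Q+1/(p')_Q}=|Q|,$$
from which $C_p^{**}$ is extracted; again one enlarges if needed so that $C_p^{**}\ge 1$.

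To prove the sharp scaling I would split into two regimes. For small cubes, $\ell(Q)\le 1$, the local log-H\"older condition gives $|1/p(x)-1/p_Q|\lesssim 1/\log(e+1/\ell(Q))$ for every $x\in Q$; Lemma \ref{modular y norma} reduces the norm to the modular, and the resulting exponent fluctuation produces factors of the form $\ell(Q)^{C/\log(e+1/\ell(Q))}$, which are bounded above and below. For large cubes, the decay condition \eqref{log2} allows one to replace $p(x)$ by $p_\infty$ on most of $Q$ with analogous bounded correction factors of the form $|x|^{C/\log(e+|x|)}$; the contribution from the fixed bounded region near the origin is absorbed using $p^-\ge 1$.

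The main obstacle is obtaining uniformity of the constants across all scales: one must carefully paste the small-cube and large-cube estimates near $\ell(Q)\simeq 1$, and separately handle the possibility $p^+=\infty$, in which case $\varphi_{\ex{p}}$ degenerates to $\infty\cdot\chi_{(1,\infty)}$ on $\{p=\infty\}$ and the modular reduces to an essential supremum on that set. This balancing is the technical heart of the argument, and is precisely the content of \cite[Corollary 4.5.9]{DHHR}, which is the reference invoked in the statement.
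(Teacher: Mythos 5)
Your argument is correct and is essentially the paper's own route: the paper offers no proof of this lemma, merely deducing it from [\cite{DHHR}, Corollary 4.5.9], and your reconstruction---H\"older's inequality \eqref{holderpp} applied to $f=g=\car_Q$ for the lower bound, and the scaling $\normadeflp{\car_Q}{p}\simeq|Q|^{(1/p)_Q}$ (the DHHR estimate the paper itself quotes at the start of Section \ref{S3}) combined with the exact identity $(1/p)_Q+(1/p')_Q=1$ for the harmonic averages for the upper bound---is precisely how that corollary is obtained. Deferring the uniform-in-scale estimates and the degenerate case $p^+=\infty$ to the cited Corollary 4.5.9 is consistent with what the paper does, so there is no gap.
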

	Moreover, we have the following result.
	\begin{lema}[\cite{MP2}, Lemma 2.7]\label{equivalenciabeta}
		Let $\ex{p}, \ex{q} \inplogdern$ such that $\ex{p}\leq \ex{q}$.
		Suppose that $1/\beta(\cdot)=1/\ex{p}-1/\ex{q}$, then
		$$\normadeflp{\car_Q}{p}\normadeflp{\car_Q}{q}^{-1} \simeq \normadeflp{\car_Q}{\beta},$$
		for every cube $Q\subset\z$.
	\end{lema}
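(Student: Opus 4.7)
The plan is to reduce the equivalence to an explicit asymptotic formula for $\|\chi_Q\|_{r(\cdot)}$ valid under the log-Hölder hypothesis. First, verify that $\beta\inplogdern$: since $p(\cdot)\le q(\cdot)$, one has $0\le 1/\beta(\cdot)=1/p(\cdot)-1/q(\cdot)\le 1$, so $\beta\in\mathcal{P}(\z)$; and $1/\beta(\cdot)$, being a difference of two globally log-Hölder continuous functions (with $1/\beta_\infty=1/p_\infty-1/q_\infty$ inherited by taking limits in \eqref{log2}), is itself log-Hölder. Consequently Lemma~\ref{p en plog} applies to $\beta$ as well.

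The direction $\|\chi_Q\|_{p(\cdot)}\|\chi_Q\|_{q(\cdot)}^{-1}\lesssim \|\chi_Q\|_{\beta(\cdot)}$ follows at once by applying the generalized Hölder inequality \eqref{holderrpq} to the trivial factorization $\chi_Q=\chi_Q\cdot\chi_Q$ with the relation $1/p(\cdot)=1/q(\cdot)+1/\beta(\cdot)$.

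For the reverse inequality the plan is to invoke the asymptotic characterization
$$\|\chi_Q\|_{r(\cdot)}\simeq |Q|^{1/r_Q},\qquad r\inplogdern,$$
a consequence of Corollary~4.5.9 in \cite{DHHR} (the same source underlying Lemma~\ref{p en plog}), where $r_Q=r(y_Q)$ at some reference point $y_Q\in Q$ when $\ell(Q)\le 1$, and $r_Q=r_\infty$ when $\ell(Q)>1$. Applied simultaneously to $p$, $q$, $\beta$ with a common choice of $y_Q$ (or the common infinity regime) and combined with the pointwise identity $1/\beta_Q=1/p_Q-1/q_Q$, this yields
$$\frac{\|\chi_Q\|_{p(\cdot)}}{\|\chi_Q\|_{q(\cdot)}}\simeq |Q|^{1/p_Q-1/q_Q}=|Q|^{1/\beta_Q}\simeq \|\chi_Q\|_{\beta(\cdot)}.$$

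The main obstacle is precisely the reverse direction: any combination of the generalized Hölder inequality with Lemma~\ref{p en plog}---whether one factors through $p$, $q$, $\beta$ or through their conjugates $p'$, $q'$, $\beta'$---produces only the $\lesssim$ estimate already obtained in the second paragraph, so genuine log-Hölder control, encoded in the pointwise asymptotic above, is indispensable. Coordinating the reference points $y_Q$ across the three exponents is routine once the characterization is available, since $y_Q$ may be chosen independently of $r$.
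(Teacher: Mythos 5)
Your argument is correct, but note that this paper does not prove the lemma at all: it is imported verbatim from [MP2, Lemma~2.7], so there is no in-paper proof to compare with. What you give is a legitimate self-contained proof. Two remarks. First, your Hölder step (the direction $\normadeflp{\car_Q}{p}\normadeflp{\car_Q}{q}^{-1}\lesssim\normadeflp{\car_Q}{\beta}$ from $1/p(\cdot)=1/q(\cdot)+1/\beta(\cdot)$) is actually subsumed by the asymptotic formula you invoke for the reverse direction, since that formula holds for \emph{every} $x\in Q$ (resp.\ with the exponent at infinity for large cubes) and hence yields both inequalities at once; keeping the Hölder argument is harmless but redundant. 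Second, the case split between small and large cubes can be avoided entirely by using the averaged form quoted at the beginning of Section~\ref{S3} of this paper, $\normadeflp{\car_Q}{p}\simeq|Q|^{(1/p)_Q}$ for $\ex{p}\inplogdern$: since $(1/\beta)_Q=(1/p)_Q-(1/q)_Q$ exactly, one gets $\normadeflp{\car_Q}{p}\normadeflp{\car_Q}{q}^{-1}\simeq|Q|^{(1/\beta)_Q}\simeq\normadeflp{\car_Q}{\beta}$ in one line. Your preliminary verification that $\ex{\beta}\inplogdern$ (difference of log-Hölder functions, $1/\beta_\infty=1/p_\infty-1/q_\infty$, and $1\le\beta(\cdot)\le\infty$ since $\ex{p}\le\ex{q}$) is needed and correctly done; just make sure the norm asymptotics you cite are used in the form valid for exponents attaining the value $\infty$ (as $\beta$ does wherever $p=q$), which is the case because the DHHR statements are formulated in terms of $1/\beta$. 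Your observation that Hölder's inequality \eqref{holderrpq} together with Lemma~\ref{p en plog} alone cannot produce the upper bound for $\normadeflp{\car_Q}{\beta}$ is also accurate: those tools only bound products of norms of $\car_Q$ from below by the norm with the combined exponent, so some quantitative log-Hölder input such as the $|Q|^{(1/p)_Q}$ formula is indeed indispensable.
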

	Note that Lemma \ref{equivalenciabeta} implies Lemma \ref{p en plog} making the choices $\ex{\beta}:=\ex{p}$, $\ex{q}:=\ex{p'}$ and $\ex{p}:=1$.
	\medskip
	
	The following lemma gives a doubling property for the functional $\mathtt{f}(Q):=\normadeflp{\ca}{p}$ with $\ex{p}\inplogdern$.
	\begin{lema}[\cite{PRa}, Equation (2.11)]\label{(2.11)}
		If $\ex{p}\inplogdern$ with $p^+<\infty$, then there exists a positive constant $C_p$ such that the inequality
		\begin{equation}\label{C_p}
		\normadeflp{\car_{2Q}}{p}\leq C_p \normadeflp{\car_Q}{p}
		\end{equation}
		holds for every cube $Q\subset\z$.
	\end{lema}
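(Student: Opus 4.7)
The plan is to derive this doubling inequality directly from Lemma~\ref{p en plog} (the multiplicative duality $\normadeflp{\car_Q}{p}\normadeflp{\car_Q}{p'}\simeq|Q|$), together with the monotonicity of the Luxemburg norm. No delicate modular estimate is needed.

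First, I would verify that Lemma~\ref{p en plog} can be applied to the conjugate exponent. Since $\ex{p}\inplogdern$ and $p^+<\infty$, we have $(p')^->1$, so $\ex{p'}\in\mathcal{P}(\z)$; moreover $1/\ex{p'}=1-1/\ex{p}$ is globally log-H\"older because $1/\ex{p}$ is, giving $\ex{p'}\inplogdern$ (the content of Lemma~\ref{propiedades plog}(ii)). Hence Lemma~\ref{p en plog} is available for both $\ex{p}$ and $\ex{p'}$.

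Applying it to $\ex{p}$ at the cube $2Q$ yields
$$\normadeflp{\car_{2Q}}{p}\simeq\frac{|2Q|}{\normadeflp{\car_{2Q}}{p'}}.$$
Because $\car_Q\le\car_{2Q}$ pointwise, monotonicity of the Luxemburg norm gives $\normadeflp{\car_Q}{p'}\le\normadeflp{\car_{2Q}}{p'}$. Substituting this lower bound in the denominator and then invoking Lemma~\ref{p en plog} once more, now for $\ex{p'}$, one obtains
$$\normadeflp{\car_{2Q}}{p}\le C\,\frac{2^n|Q|}{\normadeflp{\car_{2Q}}{p'}}\le C\,\frac{2^n|Q|}{\normadeflp{\car_Q}{p'}}\le C'\,2^n\,\normadeflp{\car_Q}{p},$$
which is the claimed inequality with $C_p:=C'2^n$, depending only on $n$ and on the log-H\"older constants implicit in $C_p^*,C_p^{**}$.

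The only conceivable obstacle is the applicability of Lemma~\ref{p en plog} to $\ex{p'}$, which was addressed in the first step and explains why the hypothesis $p^+<\infty$ is assumed. An alternative, more laborious route would work directly with the modular $\int_{2Q}\lambda^{-p(x)}\,dx$, splitting cubes into a ``small'' regime (where the local log-H\"older condition bounds the oscillation $p^+_Q-p^-_Q$) and a ``large'' regime (where the decay condition toward $p_\infty$ enters); one would recover the same bound, but the double-dualization argument above is shorter and consistent with the style of the rest of the paper.
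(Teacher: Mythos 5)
Your argument is correct: the chain
\begin{equation*}
\normadeflp{\car_{2Q}}{p}\lesssim\frac{|2Q|}{\normadeflp{\car_{2Q}}{p'}}\le\frac{2^n|Q|}{\normadeflp{\car_{Q}}{p'}}\lesssim 2^n\normadeflp{\car_{Q}}{p}
\end{equation*}
uses only the two-sided estimate of Lemma~\ref{p en plog} and the monotonicity of the Luxemburg norm, and it yields the doubling constant with the right dependence. Note, however, that the paper itself does not prove this lemma: it is quoted from \cite{PRa} (Equation (2.11)), where the standard route is via the modular/average characterization $\normadeflp{\car_Q}{p}\simeq|Q|^{(1/p)_Q}$ of \cite{DHHR} — essentially the ``more laborious'' alternative you sketch at the end. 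So your derivation is a legitimately different (and shorter) self-contained proof, leaning on a result the paper already states. Two small inaccuracies in your commentary, neither fatal: you do not actually need Lemma~\ref{p en plog} ``for $\ex{p'}$'' at all, since the statement for the exponent $\ex{p}$ already contains both norms $\normadeflp{\car_Q}{p}$ and $\normadeflp{\car_Q}{p'}$ symmetrically (the first inequality gives $|Q|/\normadeflp{\car_Q}{p'}\lesssim\normadeflp{\car_Q}{p}$, the second gives $\normadeflp{\car_{2Q}}{p}\lesssim|2Q|/\normadeflp{\car_{2Q}}{p'}$); and the hypothesis $p^+<\infty$ is not what makes $\ex{p'}\inplogdern$ (that holds for any $\ex{p}\inplogdern$ by Lemma~\ref{propiedades plog}(ii), with the usual conventions for the values $1$ and $\infty$), so your closing explanation of why $p^+<\infty$ is assumed is not accurate, though it does not affect the validity of the proof under the stated hypotheses.
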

	By iteration of inequality \eqref{C_p} it is not difficult to prove that
	\begin{equation}\label{C gamma}
	\normadeflp{\car_{\gamma Q}}{p}\lesssim \normadeflp{\car_Q}{p}
	\end{equation} 
	holds for every cube $Q\subset\z$, with an appropriate constant depending on $\gamma$ and $C_p$.
	\medskip
	
	The next theorem is an useful tool in order to prove Theorem \ref{teo conm variable con lipschitz generalizado}.
	\begin{teo}[\cite{DHHR}, Theorem 7.3.22]\label{7.3.22}
		If $p\inplogdern$, then 
		$$\sum_{Q\in\mathcal{D}}\normadeflp{\ca f}{p}\normadeflp{\ca g}{p'}
		\le G_p \normadeflp{f}{p}\normadeflp{g}{p'}$$
		for all $f\in\lpr{p}$, $g\in\lpr{p'}$ and every family $\mathcal{D}$ of pairwise disjoint cubes. 
	\end{teo}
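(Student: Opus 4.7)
The plan is to reduce the statement to a local Hölder--Young inequality on each cube with an error term that sums to a finite constant, and then exploit the disjointness of $\mathcal{D}$.

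First, by homogeneity I would normalize so that $\normadeflp{f}{p}=\normadeflp{g}{p'}=1$; when $p^+<\infty$, Lemma \ref{modular y norma} then gives $\int_{\z}|f(x)|^{p(x)}\,dx\le 1$ and $\int_{\z}|g(x)|^{p'(x)}\,dx\le 1$. To motivate the strategy, note that if $p$ were a constant each summand would factor as $\normadeflp{\ca f}{p}\normadeflp{\ca g}{p'}=(\int_Q|f|^p)^{1/p}(\int_Q|g|^{p'})^{1/p'}$, and scalar Young's inequality would yield the local bound $\tfrac{1}{p}\int_Q|f|^p+\tfrac{1}{p'}\int_Q|g|^{p'}$; summing over disjoint $Q$ and bounding $\sum_{Q}\ca\le 1$ would close the argument with $G_p=1$.

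For variable $p$ the Luxemburg norm does not factor as a single power-integral, so the Young step must be preceded by an exponent-freezing step. The core estimate I would target is: for every cube $Q$,
\begin{equation*}
\normadeflp{\ca f}{p}\normadeflp{\ca g}{p'}
\le C_p\left(\int_Q |f(x)|^{p(x)}\,dx+\int_Q |g(x)|^{p'(x)}\,dx+\int_Q \eta(x)\,dx\right),
\end{equation*}
where $\eta$ is a fixed nonnegative function in $L^1(\z)\cap L^\infty(\z)$ independent of $Q$ (a model choice is $\eta(x)=(e+|x|)^{-m}$ for $m$ large). The plan is to freeze the exponent on $Q$ at a constant value $p_Q$ (e.g.\ $p_Q^-$ or $p(x_Q)$), run the constant-exponent Young argument at the frozen exponent, and then use the log-Hölder continuity of $p$ to transfer back to $p(x)$. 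This transfer is the content of the \emph{key estimate} of \cite{DHHR} (Theorem 4.2.4 there), which via the global condition (\ref{log2}) together with its local counterpart produces an additive error controlled by $\int_Q\eta$.

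Once this local bound is established, summation over $\mathcal{D}$ is immediate from disjointness: using $\sum_{Q\in\mathcal{D}}\ca\le 1$,
\begin{equation*}
\sum_{Q\in\mathcal{D}}\normadeflp{\ca f}{p}\normadeflp{\ca g}{p'}
\le C_p\left(\int_{\z}|f|^{p(x)}\,dx+\int_{\z}|g|^{p'(x)}\,dx+\normadefp{\eta}{L^1(\z)}\right)
\le C_p\bigl(2+\normadefp{\eta}{L^1(\z)}\bigr),
\end{equation*}
so we may take $G_p=C_p(2+\normadefp{\eta}{L^1(\z)})$. The main obstacle is the local inequality itself: in the variable-exponent setting $\normadeflp{\ca f}{p}$ is not pointwise comparable to $(\int_Q|f|^{p(x)}\,dx)^{1/p_Q}$ because $p(x)$ genuinely oscillates inside $Q$, and log-Hölder continuity is needed precisely to quantify this oscillation with a \emph{universal} (cube-independent) integrable error $\eta$. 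Producing such an $\eta$ from the combination of the local and asymptotic log-Hölder estimates on $p$ is the delicate point, and it is where both halves of the $\mathcal{P}^{\log}(\z)$ assumption are genuinely used; once the freezing is done, the summation is essentially the scalar Young argument.
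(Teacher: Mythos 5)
First, a remark on the comparison target: the paper does not prove this statement at all — it is quoted directly from [DHHR, Theorem 7.3.22] — so the relevant benchmark is the proof in that monograph. That proof is essentially: write each summand as $|Q|\,\frac{\normadeflp{\ca f}{p}}{\normadeflp{\ca}{p}}\,\frac{\normadeflp{\ca g}{p'}}{\normadeflp{\ca}{p'}}$ using $\normadeflp{\ca}{p}\normadeflp{\ca}{p'}\simeq|Q|$ (Lemma \ref{p en plog}), use the disjointness of $\mathcal{D}$ to recognize the sum as $\int FG$ with $F=\sum_{Q}\frac{\normadeflp{\ca f}{p}}{\normadeflp{\ca}{p}}\ca$ and $G$ the analogous function for $g$, apply H\"older's inequality \eqref{holderpp}, and invoke the boundedness on $\lpr{p}$ of these norm-averaging operators (the class $\mathcal{A}$, which contains $\mathcal{P}^{\rm log}$). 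Your plan — normalize, prove a local inequality $\normadeflp{\ca f}{p}\normadeflp{\ca g}{p'}\lesssim\int_Q|f|^{p(x)}dx+\int_Q|g|^{p'(x)}dx+\int_Q\eta$ with a cube-independent integrable $\eta$, then sum over the disjoint family — is a genuinely different route, and the summation step of your argument is indeed immediate once the local inequality is available.

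The genuine gap is that this local inequality, which is the entire content of the theorem, is asserted rather than proved, and the tool you cite does not supply it. The ``key estimate'' [DHHR, Thm.\ 4.2.4] controls quantities of the form $\bigl(\fint_Q|f|\,dy\bigr)^{p(x)}$, i.e.\ integral averages, whereas your left-hand side involves the Luxemburg norms of the restrictions $\ca f$ and $\ca g$, which are not averages; some norm-version of the key estimate (or a direct comparison of $\normadeflp{\ca f}{p}$ with the modular $\int_Q|f|^{p(x)}dx$ up to the error $\int_Q\eta$) is needed and is not in the cited statement. Concretely, whatever freezing you choose ($p_Q^-$, $p(x_Q)$, or better the harmonic average $1/p_Q:=(1/p)_Q$, which is the only choice making the two frozen exponents exactly conjugate since $(1/p)_Q+(1/p')_Q=1$), the passage between the frozen power of a Luxemburg norm and the variable-exponent modular costs a factor of order $(\cdot)^{-\theta_Q}$, where $\theta_Q=\sup_Q 1/p-\inf_Q 1/p$; absorbing this loss into $\int_Q\eta$ requires playing the local log-H\"older bound $\theta_Q\lesssim 1/\log(e+1/\ell(Q))$ and the decay bound $\theta_Q\lesssim 1/\log(e+\mathrm{dist}(0,Q))$ against the lower bound $\int_Q\eta\gtrsim\min(1,\ell(Q))^n(e+\mathrm{dist}(0,Q)+\ell(Q))^{-m}$, uniformly over all cubes (small, large, near and far from the origin). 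This can be carried out, so your strategy is salvageable, but it is precisely the delicate step you defer, so as written the proof is incomplete at its core. Two smaller points: the statement allows $p^+=\infty$ (and $p^-=1$), where your normalization via Lemma \ref{modular y norma} and the pointwise powers $|f|^{p(x)}$, $|g|^{p'(x)}$ need modification; and the error term must be produced with constants independent of the cube, which is exactly where both halves of the $\mathcal{P}^{\rm log}$ hypothesis enter — merely naming $\eta(x)=(e+|x|)^{-m}$ does not yet do this.
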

	
	Moreover, a similar result considering overlaping families is the following.
	
	\begin{lema}[\cite{MP2}, Lemma 3.5]\label{lemaQ0}
		Let $\ex{p}\inplogdern$, $d\in\zZ$ and $Q_0$ a dyadic cube. If we define
		$$\mathcal{O}_d=\{Q \texto{ dyadic cube} \,:\, Q \subset Q_0 \texto{ and } \ell(Q)=2^{-d}\},$$
		then
		\begin{align}\label{32}
		\sum_{Q\in\mathcal{O}_d}
		\normadeflp{f\car_{3Q}}{p}
		\normadeflp{g\car_{3Q}}{p'}
		\lesssim
		\normadeflp{f\car_{3Q_0}}{p}
		\normadeflp{g\car_{3Q_0}}{p'}
		\end{align}
		for every $f\in\lploc{p}$ and $g\in\lploc{p'}$, where the implied constant in $\lesssim$ does not depend on $d$.
	\end{lema}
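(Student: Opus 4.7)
The plan is to reduce the overlapping sum over $\mathcal{O}_d$ to a bounded number of sums over pairwise disjoint families, each of which can be handled by Theorem \ref{7.3.22}.

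First I dispose of the trivial case. If $2^{-d} > \ell(Q_0)$, then no dyadic cube of sidelength $2^{-d}$ fits inside $Q_0$, so $\mathcal{O}_d=\emptyset$ and there is nothing to prove. Hence I may assume $\ell(Q)\le\ell(Q_0)$ for every $Q\in\mathcal{O}_d$. In this regime an elementary inclusion check shows $3Q\subset 3Q_0$: the cube $3Q$ extends past $Q$ by $\ell(Q)$ on each side, while $3Q_0$ extends past $Q_0$ by $\ell(Q_0)\ge\ell(Q)$, so the overshoot of $3Q$ across $\partial Q_0$ is absorbed by $3Q_0$.

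Next I exploit that all cubes of $\mathcal{O}_d$ share the sidelength $2^{-d}$. Writing each such $Q$ as $Q=2^{-d}k_Q+[0,2^{-d})^n$ with $k_Q\in\zZ^n$, two triples $3Q$ and $3Q'$ can intersect in positive measure only when $|k_{Q,i}-k_{Q',i}|\le 2$ for every coordinate $i$. Coloring $\zZ^n$ by the residue class modulo $3$ in each coordinate therefore partitions $\mathcal{O}_d$ into at most $3^n$ subfamilies $\mathcal{O}_d^{(1)},\dots,\mathcal{O}_d^{(3^n)}$ such that, for each $j$, the cubes $\{3Q : Q\in\mathcal{O}_d^{(j)}\}$ are pairwise disjoint (same residue class and distinct indices force $|k_{Q,i}-k_{Q',i}|\ge 3$ for some $i$).

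I then apply Theorem \ref{7.3.22} to each color class, taking the pairwise disjoint family to be the triples and replacing $f,g$ by $f\car_{3Q_0}, g\car_{3Q_0}$ (legitimate because $3Q\subset 3Q_0$), to obtain
\begin{equation*}
\sum_{Q\in\mathcal{O}_d^{(j)}}\normadeflp{f\car_{3Q}}{p}\normadeflp{g\car_{3Q}}{p'}
\le G_p\,\normadeflp{f\car_{3Q_0}}{p}\normadeflp{g\car_{3Q_0}}{p'}.
\end{equation*}
Summing over $j=1,\dots,3^n$ yields the lemma with implied constant $3^n G_p$, independent of $d$. The only conceptual obstacle is the coloring step and verifying within-class disjointness; once that is in place, the estimate reduces to a direct application of Theorem \ref{7.3.22} and the remainder is routine bookkeeping.
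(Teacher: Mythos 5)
Your argument is correct. Note that the paper does not prove this lemma at all --- it is quoted from [MP2, Lemma 3.5] --- so what you have produced is a self-contained proof, and it follows the natural route one would expect: since all cubes in $\mathcal{O}_d$ have the common sidelength $2^{-d}$, the overlapping family $\{3Q\}$ splits into boundedly many non-overlapping subfamilies, each of which is handled by Theorem \ref{7.3.22} applied to $f\car_{3Q_0}$ and $g\car_{3Q_0}$ (legitimate since $3Q\subset 3Q_0$, which you verify correctly, and since $f\car_{3Q_0}\in\lpr{p}$, $g\car_{3Q_0}\in\lpr{p'}$ by the local integrability hypotheses). The one cosmetic point is your claim that within a colour class the triples are \emph{pairwise disjoint}: with residues mod $3$, two triples in the same class with $|k_i-k'_i|=3$ in some coordinate are non-overlapping but may still share a face, so they are disjoint only up to a null set if the cubes are taken closed. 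This is harmless --- replace each $3Q$ by its interior (or a half-open version), which changes none of the norms and gives literal pairwise disjointness, or colour mod $4$ at the price of $4^n$ classes --- but it is worth a sentence so that Theorem \ref{7.3.22} applies verbatim. With that remark, the bound $3^nG_p$ (or $4^nG_p$), uniform in $d$, follows exactly as you state.
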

	\medskip
	
	In order to prove Lemma \ref{Izuki con lipschitz general} we state the next result that follows from [\cite{D1},Lemma 5.5].
	Recall that $f_Q$ denote the average $\frac{1}{|Q|}\int_Q f$.
	\begin{lema}[\cite{D1}]\label{lema diening con p variable}
		Let $\ex{p}\inplogdern$ with $1<p^-\le p^+<\infty$.
		Then exists a constant $0<\nu<1$ such that for every cube $Q$ and every function $f\in \lplocc{1}$ with $f_Q\neq0$,
		$$\normadeflp{|f|^\nu\car_Q}{p}
		\lesssim\normadeflp{\car_Q}{p} |f_Q|^\nu.$$
	\end{lema}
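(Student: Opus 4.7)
The strategy is to split $|f|^{\nu}\car_Q$ via the generalized H\"older inequality \eqref{holderrpq}, using a constant exponent $1/\nu$ on one side and a companion variable exponent $\ex{t}$ on the other. The first factor will collapse to an honest $L^{1}$-average (producing a power of $f_Q$), while the second factor will be a norm of $\car_Q$ that can be converted back to $\normadeflp{\car_Q}{p}$ via Lemma \ref{equivalenciabeta}.

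Before starting, I would reduce to the case $f\geq 0$ with $f_Q=1$: only $|f|$ enters the left-hand side, so replacing $f$ by $|f|$ only strengthens the inequality (since $|f|_Q\ge|f_Q|$); and the homogeneity of $\normadeflp{\cdot}{p}$ lets me divide through by $f_Q$. It is thus enough to prove
\begin{equation*}
\normadeflp{f^{\nu}\car_Q}{p}\lesssim\normadeflp{\car_Q}{p}\quad\text{for }f\ge 0\text{ with }\textstyle\fint_Q f=1.
\end{equation*}
Next choose any $0<\nu<1/p^{+}$ (possible since $p^{+}<\infty$) and define $\ex{t}$ by $1/t(\cdot)=1/p(\cdot)-\nu$. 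Because $1/\ex{p}$ is globally log-H\"older and $\nu$ is a constant, $1/\ex{t}$ is globally log-H\"older; $p^{-}>1$ gives $t^{-}\ge 1$, and $\nu<1/p^{+}$ gives $t^{+}<\infty$. Hence $\ex{t}\inplogdern$, with $\ex{p}\le 1/\nu$ pointwise and $1/\ex{p}=\nu+1/\ex{t}$, which is exactly the hypothesis of \eqref{holderrpq}.

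Writing $f^{\nu}\car_Q=(f\car_Q)^{\nu}\cdot\car_Q$ and applying \eqref{holderrpq} with the exponent pair $(1/\nu,\ex{t})$ yields
\begin{equation*}
\normadeflp{f^{\nu}\car_Q}{p}\lesssim\left\|(f\car_Q)^{\nu}\right\|_{L^{1/\nu}}\,\normadeflp{\car_Q}{t}=\Big(\textstyle\int_Q f\Big)^{\nu}\normadeflp{\car_Q}{t}=|Q|^{\nu}\normadeflp{\car_Q}{t},
\end{equation*}
where the first equality uses that $1/\nu$ is a constant exponent. Lemma \ref{equivalenciabeta}, applied with the ordered pair $(\ex{p},1/\nu)$ and $\ex{\beta}:=\ex{t}$, gives
$\normadeflp{\car_Q}{t}\simeq\normadeflp{\car_Q}{p}/\normadeflp{\car_Q}{1/\nu}=\normadeflp{\car_Q}{p}/|Q|^{\nu}$. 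The two factors of $|Q|^{\nu}$ then cancel, proving the normalized inequality; undoing the normalization reinstates $|f_Q|^{\nu}$ on the right-hand side.

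The main obstacle is not the estimate itself but the bookkeeping of exponent constraints: $\nu$ must be small enough that $t^{+}<\infty$, i.e.\ $\nu<1/p^{+}$, while the assumption $p^{-}>1$ is precisely what keeps $t^{-}\ge 1$ after this choice and allows Lemma \ref{equivalenciabeta} to be invoked. This is exactly where the hypothesis $1<p^{-}\le p^{+}<\infty$ plays its essential role, and it explains why the constant $\nu$ depends only on $p$.
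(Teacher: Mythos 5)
Your core estimate is sound, and since the paper offers no proof of this lemma (it is simply quoted from \cite{D1}), your argument is a legitimate self-contained derivation from results already stated in the paper: the generalized H\"older inequality \eqref{holderrpq} with the exponent pair $(1/\nu,\ex{t})$, $1/\ex{t}=1/\ex{p}-\nu$, the identity $\normadefp{\car_Q}{L^{1/\nu}}=|Q|^\nu$, and Lemma \ref{equivalenciabeta} applied to the ordered pair $(\ex{p},1/\nu)$, which is admissible exactly because you chose $\nu<1/p^+$. All exponent bookkeeping checks out (in fact $\ex{t}\ge\ex{p}\ge1$ automatically, so the hypothesis $p^->1$ is never really used in your argument; only $p^+<\infty$ and the log-H\"older condition enter, through the two quoted lemmas). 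For nonnegative $f$ this yields $\normadeflp{f^\nu\car_Q}{p}\lesssim\normadeflp{\car_Q}{p}\,(f_Q)^\nu$, even for every $\nu\in(0,1/p^+)$ rather than just some $\nu$.

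The genuine defect is the preliminary reduction. Replacing $f$ by $|f|$ leaves the left-hand side unchanged but replaces $|f_Q|$ by $(|f|)_Q\ge|f_Q|$, so it \emph{weakens} the target inequality rather than strengthening it: what you actually prove is $\normadeflp{|f|^\nu\car_Q}{p}\lesssim\left(\fint_Q|f|\right)^\nu\normadeflp{\car_Q}{p}$, not the printed statement with $|f_Q|$. No argument can close that gap, because the statement as literally printed is false for sign-changing $f$: take $\ex{p}\equiv2$, $Q$ the unit cube, $f=1$ on one half of $Q$ and $f=-1+\epsilon$ on the other half; then $f_Q=\epsilon/2\neq0$ and the left-hand side is comparable to $\normadeflp{\car_Q}{p}$, while the right-hand side is $(\epsilon/2)^\nu\normadeflp{\car_Q}{p}\to0$. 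The flaw is thus in the paper's formulation rather than in your estimate: the version with $(|f|)_Q$ is exactly the one invoked in the proof of Lemma \ref{Izuki con lipschitz general}, so your proof serves the paper's purposes, but you should delete the claim that passing to $|f|$ ``only strengthens'' the inequality and instead state and prove the lemma for $|f|$ (equivalently, for nonnegative $f$), noting that the homogeneity argument then reinstates $\left(\fint_Q|f|\right)^\nu$, not $|f_Q|^\nu$, on the right.
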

	\medskip
	
	The next theorems gives boundedness results in Musielak-Orlicz spaces for certain maximal functions.
	\begin{teo}[\cite{MMO}, Proposition 2.5]\label{teo max de HL en llogl}
		Let  $\ex{p}\inplogdern$ with $1<p^-\le p^+<\infty$ and $\ex{q}\in \mathcal{P}^{\rm loglog}(\zR^n)$.
		Then
		\begin{equation*}
		M:\lp{p}(\log L)^{\ex{q}}(\z) \hookrightarrow \lp{p}(\log L)^{\ex{q}}(\z).
		\end{equation*}
	\end{teo}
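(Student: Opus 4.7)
The result is cited from \cite{MMO}, so I sketch how one typically establishes boundedness of $M$ on a Musielak-Orlicz-type space of this kind. My plan is to reduce the $L^{\ex{p}}(\log L)^{\ex{q}}(\z)$ norm estimate to a modular estimate for the G$\Phi$-function $\Psi(x,t)=t^{p(x)}(\log(e+t))^{q(x)}$, and then apply a pointwise transplantation in the style of the variable exponent theory, upgraded to absorb the extra logarithmic factor.

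By the modular/norm equivalence (the analogue of Lemma \ref{modular y norma} for $L^\Psi$), it suffices to find $C>0$ such that $\int_{\z}\Psi(x,Mf(x)/C)\,dx\lesssim 1$ whenever $\int_{\z}\Psi(x,|f(x)|)\,dx\le 1$. I would split $f=f_1+f_2$ with $f_1=f\car_{\{|f|\le 1\}}$ and $f_2=f\car_{\{|f|>1\}}$, and use $Mf\le Mf_1+Mf_2$. On $\{|f|\le 1\}$ one has $\Psi(x,t)\simeq t^{p(x)}$, so the contribution of $Mf_1$ reduces to Diening's theorem on the boundedness of $M$ on $L^{\ex{p}}(\z)$, which only requires $\ex{p}\inplogdern$.

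The heart of the argument is a pointwise ``key inequality'' for $Mf_2$, of the form
$$\Psi\bigl(x,Mf_2(x)\bigr)^{1/s}\lesssim M\bigl[\Psi(\cdot,f_2)^{1/s}\bigr](x)+h(x)^{1/s},\esp h\in L^1(\z),$$
for some $s>1$. To establish it, one fixes a cube $Q\ni x$ that nearly achieves $Mf_2(x)$ and transplants the two variable exponents $p(x)\to p(y)$ and $q(x)\to q(y)$ inside the averages. The factor $t^{p(x)}$ is handled by log-Hölder continuity of $\ex{p}$ exactly as in \cite{D1}. Comparing $(\log(e+t))^{q(x)}$ with $(\log(e+t))^{q(y)}$ produces a multiplicative error of the form $(\log(e+t))^{|q(x)-q(y)|}$; since the base is already logarithmic, only the log-log-Hölder condition $|q(x)-q(y)|\lesssim 1/\log(e+\log(e+1/|x-y|))$ is strong enough to keep that factor bounded, and this is precisely the structural reason for the hypothesis $\ex{q}\in\mathcal{P}^{\rm loglog}(\z)$. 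The strong $L^s$-boundedness of $M$ applied to $\Psi(\cdot,f_2)^{1/s}$ (whose $L^s$-norm is controlled by the initial modular $\le 1$) together with the summability of $h$ then delivers the desired modular control of $Mf$.

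The main obstacle is making the log-log transplantation quantitative so that $h$ genuinely lies in $L^1(\z)$ and not merely in $L^\infty(\z)$. The standard workaround, which I would follow, is a careful case split depending on whether $\log(e+Mf_2(x))$ is comparable to $\log(e+1/\ell(Q))$ or essentially larger, combined with Remark \ref{plog y ploglog} to transfer $\log L$-type exponents across products of exponents.
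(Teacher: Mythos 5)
This theorem is not proved in the paper at all: it is imported verbatim from \cite{MMO}, Proposition 2.5, so the only ``proof'' the paper offers is the citation, and for the purposes of this paper citing that result is exactly what is required. Your sketch is consistent in spirit with how such results are established in the literature (a modular reduction in the style of Lemma \ref{modular y norma}, a splitting into $|f|\le 1$ and $|f|>1$, Diening-type transplantation of the exponent using log-H\"older continuity, and a log-log condition on $\ex{q}$ to absorb the factor $(\log(e+t))^{|q(x)-q(y)|}$), so as a heuristic explanation of why the hypothesis $\ex{q}\in\mathcal{P}^{\rm loglog}(\z)$ is the natural one, it is reasonable.

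As a proof, however, it has a genuine gap: the pointwise ``key inequality'' $\Psi(x,Mf_2(x))^{1/s}\lesssim M[\Psi(\cdot,f_2)^{1/s}](x)+h(x)^{1/s}$ with $h\in L^1(\z)$ is the entire content of the theorem, and you assert it rather than prove it. Two specific issues are left open. First, in the variable-exponent model case the integrability of the error term $h$ comes from the decay condition \eqref{log2} at infinity for $1/\ex{p}$; your argument never identifies what plays that role for the logarithmic exponent, and indeed the class $\mathcal{P}^{\rm loglog}(\z)$ as defined in this paper imposes no condition at infinity on $\ex{q}$, so one must actually argue (e.g.\ via the smallness of averages over large cubes, so that $\log(e+t)\simeq 1$ there) that no such condition is needed --- this is not addressed. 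Second, the comparison of $(\log(e+t))^{q(x)}$ with $(\log(e+t))^{q(y)}$ via the log-log modulus only works when $t$ is comparable to a power of $1/\ell(Q)$ (as in Lemma \ref{prop loglog}, which compares at the fixed value $t=1/|Q|$); for a general average this is exactly the case split you defer to ``a careful workaround,'' i.e.\ the hard step is postponed rather than carried out. So the proposal should either be completed at these two points or simply replaced by the citation to \cite{MMO}, as the paper does.
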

	
	\begin{teo}[\cite{DHHR}, Theorem 7.3.27]\label{teo logl}
		Let $\ex{p}, \ex{s},l(\cdot) \inplogdern$ such that $\ex{p}=\ex{s}l(\cdot)$ and $l^->1$.
		Then
		\begin{equation*}
		M_{L^{\ex{s}}}:\lp{p}(\z) \hookrightarrow \lp{p}(\z).
		\end{equation*}
	\end{teo}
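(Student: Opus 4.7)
The plan is to reduce the boundedness of $M_{L^{\ex{s}}}$ on $\lpr{p}$ to the classical Hardy--Littlewood maximal theorem on $L^{l(\cdot)}(\z)$, which holds because $\ex{l}\inplogdern$ and $l^->1$. The bridge is the change of unknown $g:=|f|^{\ex{s}}$, which maps $\lpr{p}$ into $L^{\ex{l}}(\z)$ in view of $\ex{p}=\ex{s}\ex{l}$ and the modular--norm correspondence of Lemma \ref{modular y norma}: indeed, if $\normadeflp{f}{p}\le 1$ then $\int_\z|f(x)|^{s(x)l(x)}\,dx\le 1$, which is equivalent to $\normadeflp{|f|^{\ex{s}}}{l}\le 1$.

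The heart of the argument is a pointwise estimate of the form
$$\bigl(M_{L^{\ex{s}}}f(x)\bigr)^{s(x)}\;\lesssim\; M(|f|^{\ex{s}})(x)+\eta(x),\qquad x\in\z,$$
where $\eta(x)=C(e+|x|)^{-N}$ for $N$ large and $M$ is the Hardy--Littlewood maximal operator. To prove it, I would fix a cube $Q\ni x$ and start from Lemma \ref{p en plog}, which gives $\normadeflp{\ca}{s}\gtrsim |Q|/\normadeflp{\ca}{s'}$, so that
$$\frac{\normadeflp{\ca f}{s}}{\normadeflp{\ca}{s}}\;\lesssim\;\frac{\normadeflp{\ca f}{s}\normadeflp{\ca}{s'}}{|Q|}.$$
I would then use the norm-conjugate duality \eqref{dual} to rewrite $\normadeflp{\ca f}{s}$ as a supremum of integrals of $|f|$ against test functions bounded in $L^{s'(\cdot)}$, and transfer the exponent $s(y)$ appearing inside the $s(\cdot)$-modular to $s(x)$ by means of the log-Hölder continuity of $s(\cdot)$ (both local and the decay at infinity \eqref{log2}). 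This is the standard device from [\cite{DHHR}, Chapter 4]; the variable exponent is moved from $y$ to $x$ at the cost of the error $\eta(x)$, after which the average of $|f(y)|^{s(x)}$ is bounded by $M(|f|^{\ex{s}})(x)^{s(x)/s(x)}$ up to a uniform constant.

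Once the pointwise estimate is in place, by homogeneity of the operator and of the norm I may assume $\normadeflp{f}{p}\le 1$, so $\normadeflp{|f|^{\ex{s}}}{l}\le 1$. Since the classical variable-exponent Hardy--Littlewood theorem (e.g., [\cite{DHHR}, Theorem 4.3.8]) yields $\normadeflp{M(|f|^{\ex{s}})}{l}\lesssim 1$, raising the pointwise estimate to the power $l(x)$ and integrating gives
$$\int_\z\bigl(M_{L^{\ex{s}}}f(x)\bigr)^{p(x)}\,dx=\int_\z\bigl(M_{L^{\ex{s}}}f(x)\bigr)^{s(x)l(x)}\,dx\;\lesssim\;\int_\z M(|f|^{\ex{s}})(x)^{l(x)}\,dx+\int_\z \eta(x)^{l(x)}\,dx\;\lesssim\;1.$$
Invoking Lemma \ref{modular y norma} once more converts this modular bound into $\normadeflp{M_{L^{\ex{s}}}f}{p}\lesssim 1$, and undoing the normalization yields the theorem.

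The main obstacle will be the pointwise estimate in the first step: the cube $Q$ may have arbitrary size and location, so one needs both the local log-Hölder condition and the decay at infinity \eqref{log2} in order to exchange $s(y)$ with $s(x)$ uniformly in $Q$. The error $\eta$ must also be chosen so that $\eta\in L^{l(\cdot)}(\z)$ and so that its $l(\cdot)$-modular stays under control; balancing these requirements is where the log-Hölder hypothesis on $\ex{p}$ and $\ex{s}$ (and hence on $\ex{l}$, via Lemma \ref{propiedades plog}) is essential.
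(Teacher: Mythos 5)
You should note first that the paper offers no proof of this statement at all: it is imported verbatim as [\cite{DHHR}, Theorem 7.3.27], so the only benchmark is the standard argument behind results of this type, which is indeed the reduction you outline (pass to $g=|f|^{s(\cdot)}$, use the boundedness of $M$ on $L^{l(\cdot)}(\z)$, and glue the two together with a pointwise key estimate). Your modular bookkeeping at the end is fine, although it silently assumes $p^+,l^+<\infty$: Lemma \ref{modular y norma} and the inequality $(a+b)^{l(x)}\lesssim a^{l(x)}+b^{l(x)}$ require this. That restriction covers every use the paper makes of the theorem, but not the statement in its stated generality, where the exponents may be unbounded.

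The genuine gap is in the step you yourself call the heart of the argument. You claim the pointwise bound $\bigl(M_{L^{s(\cdot)}}f(x)\bigr)^{s(x)}\lesssim M(|f|^{s(\cdot)})(x)+\eta(x)$, with a fixed error $\eta$, for arbitrary $f$, and you bring in the normalization $\normadeflp{f}{p}\le1$ only afterwards, ``by homogeneity''. But the estimate is not homogeneous, and it is false without the normalization. For instance, take $l$ constant, two unit balls $B_1,B_2$ at distance $1$, $s\equiv s_1$ on $B_1$ and $s\equiv s_2>s_1$ on $B_2$ (smoothly interpolated, so all exponents are log-H\"older), a point $x\in B_2$, and $f=\lambda\chi_{A_\lambda}$ with $A_\lambda\subset B_1$ and $|A_\lambda|=\lambda^{-s_1(1-\delta)}$, $0<\delta<1$. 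A cube $Q\ni x$ containing $A_\lambda$ has $\normadeflp{\ca}{s}\simeq1$ and $\normadeflp{\ca f}{s}=\lambda|A_\lambda|^{1/s_1}=\lambda^{\delta}$, so the left-hand side is at least $c\,\lambda^{\delta s_2}$, while every cube containing $x$ and meeting $A_\lambda$ has measure $\gtrsim1$, whence $M(|f|^{s(\cdot)})(x)+\eta(x)\lesssim\lambda^{\delta s_1}+C$; letting $\lambda\to\infty$ defeats any constant (these $f$ have arbitrarily large $\lpr{p}$-norm, which is why the theorem itself is not contradicted). So the key lemma must be stated for $f$ in the unit ball of $\lpr{p}$, and, more importantly, its proof must actually use that hypothesis: the standard route splits $f$ at height $1$, treats the part $|f|\le1$ by the log-H\"older transfer of exponents you describe (this is where \eqref{log2} and the error $\eta$ enter), and controls the part $|f|>1$ via $|f(y)|^{s(y)}\le|f(y)|^{p(y)}$ together with $\int_{\z}|f(y)|^{p(y)}\,dy\le1$, the decay condition handling large cubes. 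Your sketch never lets the size of $f$ enter the proof of the pointwise bound, so as written that step fails; once the lemma is restated for normalized $f$ and proved with this splitting, the rest of your argument goes through.
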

	
	\begin{teo}[\cite{MP2}, Theorem 1.7]\label{teo max llogl}
		Let  $\ex{p},\ex{q}\inplogdern$ such that $\ex{p}\leq\ex{q}$ and $\ex{r}\in \mathcal{P}^{\rm loglog}(\z)$.
		Let $\ex{s}\inplogdern$ and $\beta(\cdot)$ be two functions such that $1/\beta(\cdot)=1/\ex{p}-1/\ex{q}$ and $1\leq s^-\leq s^+<p^-$.
		Then
		\begin{equation*}
		\mbetas{\beta}{L^{\ex{s}}}:\lp{p}(\log L)^{\ex{r}}(\z) \hookrightarrow \lp{q}(\log L)^{\ex{r}}(\z).
		\end{equation*}
	\end{teo}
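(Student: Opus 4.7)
The plan is to combine a sparse pointwise bound for $\mbetas{\beta}{L^{\ex{s}}}$ with the Musielak--Orlicz duality formula \eqref{dual}, then to use the variable-exponent Hölder and norm-summation machinery of Section \ref{S2} together with Theorem \ref{teo max de HL en llogl}. The guiding philosophy is the one behind Theorem \ref{teo logl}: the hypothesis $s^+<p^-$ provides an intermediate exponent $\ex{l}:=\ex{p}/\ex{s}$ with $l^->1$, which is precisely what is needed to absorb the local $L^{\ex{s}}$ average into a standard Hardy--Littlewood maximal acting on the log-log Musielak--Orlicz scale.

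The first step is a Calderón--Zygmund decomposition of $\mbetas{\beta}{L^{\ex{s}}}$. Fix $a>2^n$, set $\Omega_k:=\{x:\mbetasf{\beta}{L^{\ex{s}}}{f}(x)>a^k\}$, take maximal dyadic cubes $\{Q_j^k\}$ covering $\Omega_k$, and put $E_j^k:=Q_j^k\setminus\Omega_{k+1}$. The sets $E_j^k$ are pairwise disjoint with $|E_j^k|\gtrsim|Q_j^k|$, and a standard stopping-time argument produces the sparse pointwise bound
\begin{equation*}
\mbetasf{\beta}{L^{\ex{s}}}{f}(x)\lesssim\sum_{j,k}\normadeflp{\car_{Q_j^k}}{\beta}\,\frac{\normadeflp{\car_{Q_j^k}f}{s}}{\normadeflp{\car_{Q_j^k}}{s}}\,\car_{E_j^k}(x).
\end{equation*}
By \eqref{dual} applied to $\Phi(x,t)=t^{q(x)}(\log(e+t))^{r(x)}$, the target norm is equivalent to $\sup\int\mbetasf{\beta}{L^{\ex{s}}}{f}\,|g|\,dx$, the supremum being over $g$ with $\normadeflpl{g}{\Phi^*}\le1$. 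Substituting the sparse bound and using $\int_{E_j^k}|g|\le\int_{Q_j^k}|g|$ reduces the problem to estimating
\begin{equation*}
\sum_{j,k}\normadeflp{\car_{Q_j^k}}{\beta}\,\frac{\normadeflp{\car_{Q_j^k}f}{s}}{\normadeflp{\car_{Q_j^k}}{s}}\,\int_{Q_j^k}|g|.
\end{equation*}

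For each summand I plan to apply the generalized Hölder inequality \eqref{holdermusie} to split $\int_{Q_j^k}|g|$ as a product of two Musielak--Orlicz norms on $Q_j^k$, chosen so that, once combined with $\normadeflp{\car_{Q_j^k}f}{s}/\normadeflp{\car_{Q_j^k}}{s}$ via Lemma \ref{equivalenciabeta} (recall $1/\ex{\beta}=1/\ex{p}-1/\ex{q}$) and Lemma \ref{p en plog}, the geometric factor $\normadeflp{\car_{Q_j^k}}{\beta}$ is absorbed and each summand takes the form $\normadeflplogq{\car_{Q_j^k}f}{p}{r}\cdot N_{Q_j^k}(g)$, with $N_{Q_j^k}(g)$ a Musielak--Orlicz norm of $g$ on $Q_j^k$ compatible with $\Phi^*$. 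The pairwise disjointness of $\{E_j^k\}$ together with the doubling of characteristic-function norms (Lemma \ref{(2.11)} and \eqref{C gamma}) allows the sum over $\{Q_j^k\}$ to be controlled by a sum over a pairwise disjoint family, so that Theorem \ref{7.3.22} (or its log-log analogue obtained from Theorem \ref{teo max de HL en llogl}) delivers the final bound $\normadeflplogq{f}{p}{r}\cdot\normadeflpl{g}{\Phi^*}$; taking the supremum over $g$ closes the proof.

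The main obstacle will be the precise Hölder bookkeeping in the previous step: one must exhibit Musielak--Orlicz functions $A$, $B$ whose generalized inverses satisfy $A^{-1}(x,t)B^{-1}(x,t)\lesssim t$ (so that \eqref{holdermusie} applies with $\Theta(x,t)=t$), whose characteristic-function norms combine via Lemma \ref{equivalenciabeta} into exactly the factor $\normadeflp{\car_Q}{\beta}/\normadeflp{\car_Q}{s}$, and for which the resulting norm of $f$ agrees, modulo the action of a standard Hardy--Littlewood maximal, with $\normadeflplogq{f}{p}{r}$. The assumption $s^+<p^-$ is precisely what leaves room for the intermediate exponent $\ex{l}$ demanded by Theorem \ref{teo logl}, while Remark \ref{plog y ploglog} together with $\ex{r}\in\mathcal{P}^{\rm loglog}(\z)$ ensures that the composite log exponents produced in the factorization remain in $\mathcal{P}^{\rm loglog}(\z)$, keeping Theorem \ref{teo max de HL en llogl} available at the final step.
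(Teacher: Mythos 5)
This statement is not proved in the paper at all: it is imported verbatim from [\cite{MP2}, Theorem 1.7], so there is no internal proof to compare your argument with, and your proposal has to stand on its own. Its first half is reasonable: the reduction to a dyadic stopping-time family $\{Q_j^k\}$ with pairwise disjoint subsets $E_j^k$ of proportional measure is exactly the Calderón--Zygmund construction carried out in the proof of Theorem \ref{teo conm variable con lipschitz generalizado} (there for the functional $\normadeflp{\car_Q gw}{\tau}/\normadeflp{\car_Q}{\tau}$), the duality reduction via \eqref{dual} is legitimate, and the Hölder bookkeeping you single out as the main obstacle can indeed be organized with Proposition \ref{NORMAS loglog} and Lemmas \ref{p en plog}, \ref{equivalenciabeta}; that is not where the proof breaks.

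The genuine gap is the concluding summation. The cubes $Q_j^k$ are not pairwise disjoint --- cubes from different generations $k$ are typically nested --- so Theorem \ref{7.3.22} does not apply to them, and your proposed reduction ``to a pairwise disjoint family'' via the disjointness of the $E_j^k$ and doubling of characteristic-function norms cannot work: doubling controls only $\normadeflp{\car_Q}{p}$, while the inequality you would need, $\normadeflplogq{\car_{Q_j^k}f}{p}{r}\lesssim\normadeflplogq{\car_{E_j^k}f}{p}{r}$, goes in the wrong direction. Worse, the sparse-family analogue of Theorem \ref{7.3.22} is false in general: already for constant $p$, taking the nested family of dyadic cubes containing the origin and $f\in L^{p}$, $g\in L^{p'}$ with mild logarithmic singularities at the origin makes $\sum_Q\normadeflp{\car_Q f}{p}\normadeflp{\car_Q g}{p'}$ diverge. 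Sparseness can only be exploited by keeping the normalized averages and bounding them pointwise on $E_j^k$ by maximal operators, as in Section \ref{S4}; but for the present theorem that route is circular, because the factor $\normadeflp{\car_Q}{\beta}\fint_Q|g|$ evaluated on $E_j^k$ is again a fractional maximal operator of the very type under consideration (now $\mbetas{\beta}{L^1}$ acting on the dual of $\lp{q}(\log L)^{\ex{r}}(\z)$), and no boundedness result for it is available here: Theorem \ref{teo max de HL en llogl} covers only $M$, Theorem \ref{teo logl} covers $M_{L^{\ex{s}}}$ on $\lpr{p}$ without the logarithmic perturbation, and the ``log-log analogue of Theorem \ref{7.3.22}'' you invoke does not exist in the paper and would itself require proof. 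To close the argument you need a non-circular mechanism --- for instance a Hedberg-type pointwise estimate reducing $\mbetas{\beta}{L^{\ex{s}}}$ to $M_{L^{\ex{s}}}$ combined with the $\normadeflplogq{\car_Q}{p}{r}$ estimates of Proposition \ref{NORMAS loglog}, which is presumably the route taken in \cite{MP2} --- rather than the disjointness/summation step you sketch.
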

	
	\begin{remark}\label{teo max llogl remark}
		Since $1/\beta(\cdot)=1/\ex{q'}-1/\ex{p'}$, if $1\leq s^-\leq s^+<(q')^-$ we have that
		\begin{equation*}
		\mbetas{\beta}{L^{\ex{s}}}:\lp{q'}(\log L)^{\ex{r}}(\z) \hookrightarrow \lp{p'}(\log L)^{\ex{r}}(\z).
		\end{equation*}
	\end{remark}
	\medskip
	
	The following result establishes that the spaces $\mathcal{L}^\varrho_a$ coincide, for $1\leq \varrho < \infty$.
	\begin{teo}[\cite{Li3}, Corollary 2]\label{equivalencia lipschitz con p}
		Let $1\leq \varrho < \infty$ and $a\in T_\infty$, then $\mathcal{L}^\varrho_a = \mathcal{L}^1_a$ and
		$$\sup_Q \frac{1}{a(Q)}\left(\fint_Q |b-b_Q|^\varrho\,dx \right)^{1/\varrho}\simeq
		\sup_Q \frac{1}{a(Q)}\fint_Q |b-b_Q|\,dx.$$
	\end{teo}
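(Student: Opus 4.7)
The plan is to prove both inclusions. The easy direction $\mathcal{L}^\varrho_a \subseteq \mathcal{L}^1_a$ is immediate from H\"older's inequality: since $\varrho \geq 1$,
$$\fint_Q |b-b_Q|\,dx \leq \left(\fint_Q |b-b_Q|^\varrho\,dx\right)^{1/\varrho},$$
so dividing by $a(Q)$ and taking the supremum over all cubes $Q \subset \z$ gives $\|b\|_{\mathcal{L}^1_a} \leq \|b\|_{\mathcal{L}^\varrho_a}$.

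For the reverse direction, which is the John--Nirenberg type content of the statement, I would prove exponential decay of the oscillation level sets. Set $C_0 := \|b\|_{\mathcal{L}^1_a}$ and fix a cube $Q$. Since $\fint_Q |b - b_Q|\,dx \leq C_0 a(Q)$, for any $\lambda > 2^n C_0 a(Q)$ a Calder\'on--Zygmund decomposition of $b - b_Q$ on $Q$ at height $\lambda$ yields pairwise disjoint dyadic subcubes $\{Q^1_j\}$ with
$$\lambda < \fint_{Q^1_j}|b-b_Q|\,dx \leq 2^n\lambda, \qquad |b-b_Q| \leq \lambda \text{ a.e.\ on } Q \setminus \bigcup_j Q^1_j.$$
The first inequality forces $\sum_j |Q^1_j| \leq (C_0 a(Q)/\lambda)|Q|$, and the second gives $|b_{Q^1_j} - b_Q| \leq 2^n \lambda$. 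Crucially, the $T_\infty$ condition gives $a(Q^1_j) \leq \|a\|_{t_\infty}\, a(Q)$, so on each $Q^1_j$ the $\mathcal{L}^1_a$ control of $b$ becomes $\fint_{Q^1_j}|b - b_{Q^1_j}|\,dx \leq C_0 \|a\|_{t_\infty} a(Q)$.

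Now iterate: apply the same CZ decomposition to each $Q^1_j$ at the same height $\lambda$, but relative to the function $b - b_{Q^1_j}$, which is legitimate provided $\lambda > 2^n C_0 \|a\|_{t_\infty} a(Q)$. Choosing once and for all $\lambda := \gamma\, C_0 a(Q)$ with $\gamma$ large enough (depending only on $n$ and $\|a\|_{t_\infty}$), one obtains a nested family $\{Q^k_j\}$ such that if $\mu(k) := |\{x \in Q : |b(x) - b_Q| > k(2^n+1)\lambda\}|$, then the packing bound at each stage gives $\mu(k+1) \leq \gamma^{-1} \mu(k)$, hence
$$\mu(k) \leq \gamma^{-k}|Q| \qquad \text{for every } k \in \mathbb{N}.$$

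Translating this exponential decay into an $L^\varrho$ estimate by integrating the distribution function yields
$$\fint_Q |b - b_Q|^\varrho\,dx \lesssim (C_0 a(Q))^\varrho,$$
with an implied constant depending only on $\varrho$, $n$, and $\|a\|_{t_\infty}$. Taking the $1/\varrho$-th power, dividing by $a(Q)$, and taking the supremum over $Q$ gives $\|b\|_{\mathcal{L}^\varrho_a} \lesssim \|b\|_{\mathcal{L}^1_a}$, which combined with the easy direction provides both the equality of spaces and the equivalence of norms.

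The main obstacle is the iteration: one must show that the CZ stopping threshold $\lambda$ can be chosen proportional to $C_0 a(Q)$ uniformly over all generations of subcubes. This is exactly what the $T_\infty$ condition guarantees, since $a(Q') \leq \|a\|_{t_\infty} a(Q)$ for every subcube $Q' \subset Q$ keeps the effective Lipschitz constant of $b$ on each subcube comparable to $C_0 a(Q)$; without this monotonicity-type control the geometric decay in $k$ would fail.
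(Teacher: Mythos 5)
Your argument is correct, and it is worth noting that the paper itself does not prove this statement at all: it is imported verbatim from the literature (cited as [Li3, Corollary 2]), so there is no internal proof to compare against. Your route — the easy inclusion by Jensen/H\"older, plus a John--Nirenberg type iteration of local Calder\'on--Zygmund decompositions in which the $T_\infty$ condition $a(Q')\le \|a\|_{t_\infty}a(Q)$ keeps the stopping threshold $\lambda=\gamma C_0 a(Q)$ admissible on every generation of subcubes — is exactly the natural self-contained proof of this result, and all the key steps are in place: the packing bound $\sum_j|Q_j^1|\le (C_0a(Q)/\lambda)|Q|$, the jump bound $|b_{Q_j^{k+1}}-b_{Q_j^k}|\le 2^n\lambda$ that telescopes to give the level-set inclusion $\{|b-b_Q|>k(2^n+1)\lambda\}\subset\bigcup_j Q_j^k$ (up to null sets), and the integration of the distribution function to get $\fint_Q|b-b_Q|^\varrho\lesssim (C_0a(Q))^\varrho$ with constant depending only on $\varrho$, $n$ and $\|a\|_{t_\infty}$. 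Two small bookkeeping remarks: the per-generation decay ratio is $C_0\|a\|_{t_\infty}a(Q)/\lambda=\|a\|_{t_\infty}/\gamma$ rather than $\gamma^{-1}$ as written, so you should take $\gamma>\|a\|_{t_\infty}2^n$ (say) to make it strictly less than $1$ — this changes nothing since $\gamma$ is allowed to depend on $n$ and $\|a\|_{t_\infty}$; and the degenerate case $a(Q)=0$ (the functional is only assumed $[0,\infty)$-valued) should be either excluded by convention or handled by noting that finiteness of the $\mathcal{L}^1_a$ supremum then forces $b$ to be a.e.\ constant on $Q$, after which the estimate is trivial.
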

	\medskip
	
	The following lemma can be deduced from the proof of Theorem 2.3 in \cite{PRa} (see [\cite{PRa}, Equation (5.4)]), and it will be useful in the proof of Theorem \ref{teo general conmut lipchitz constante}.
	\begin{lema}[\cite{PRa}]\label{condicion puntual lip variable }
		Let $\ex{r}\inplogdern)$ with $r^+<\infty$ such that $r_\infty\leq \ex{r}$,
		\begin{equation*}
		1<\gamma\leq r^-\leq r^+< \frac{n\gamma}{(n-\gamma)^+} \,\,\,\texto{and}\,\,\, \ex{\delta}/n:=1/\gamma-1/\ex{r}.
		\end{equation*}
		Let $b\in\zL(\ex{\delta})$ then
		\begin{equation*}
		|b(x)-b(z)|\lesssim|x-z|^{\delta(x)}
		\end{equation*}
		for every $x,z\in\z$.
	\end{lema}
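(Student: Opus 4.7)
The plan is to extract the pointwise H\"older bound from the mean-oscillation condition defining $\zL(\ex{\delta})=\mathcal{L}^1_a$ via the standard telescoping-cube argument, using the identification $a(Q)\simeq\normadeflp{\car_Q}{n/\delta}$ noted just after the statement of Theorem \ref{teo general conmut lipchitz constante}.

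First, fix $x,z\in\z$ with $x\neq z$ and pick a cube $Q_0$ containing both with $\ell(Q_0)\simeq|x-z|$. Split
$$|b(x)-b(z)|\le|b(x)-b_{Q_0}|+|b(z)-b_{Q_0}|$$
and handle each term separately. For the $x$-term, build a chain $Q_0\supset Q_1\supset Q_2\supset\cdots$ with $x\in Q_k$ and $\ell(Q_{k+1})=\ell(Q_k)/2$. By Lebesgue's differentiation theorem $b_{Q_k}\to b(x)$ at a.e.\ point, so
$$|b(x)-b_{Q_0}|\le\sum_{k=0}^{\infty}|b_{Q_{k+1}}-b_{Q_k}|\lesssim\sum_{k=0}^{\infty}\fint_{Q_k}|b-b_{Q_k}|\,dy\lesssim\normadefp{b}{\mathcal{L}^1_a}\sum_{k=0}^{\infty}a(Q_k),$$
where the last inequality is the definition of $\zL(\ex{\delta})$ together with Theorem \ref{equivalencia lipschitz con p}. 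The same argument applied to a chain shrinking to $z$ treats the $z$-term.

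Next, use $a(Q_k)\simeq\normadeflp{\car_{Q_k}}{n/\delta}$ and Lemmas \ref{p en plog}--\ref{equivalenciabeta} together with the log-H\"older continuity of $\ex{\delta}$ to get the scale estimate
$$\normadeflp{\car_{Q_k}}{n/\delta}\simeq|Q_k|^{\delta(x)/n}=\ell(Q_k)^{\delta(x)}$$
for a cube $Q_k\ni x$. Since $\ell(Q_k)=2^{-k}\ell(Q_0)$, the tail is geometric and collapses to
$$\sum_{k\ge0}\ell(Q_k)^{\delta(x)}\le\ell(Q_0)^{\delta(x)}\sum_{k\ge0}2^{-k\delta(x)}\lesssim|x-z|^{\delta(x)}.$$

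The principal obstacle is to carry out this scale analysis \emph{uniformly}, both at small and large scales, and to reconcile the $z$-side bound (which naturally produces $|x-z|^{\delta(z)}$) with the desired exponent $\delta(x)$. For small $|x-z|$ the first log-H\"older condition forces $|x-z|^{\delta(z)-\delta(x)}$ to be bounded, so the two estimates merge. For large $|x-z|$ the asymptotic condition $r_\infty\le\ex{r}$ (equivalently $\ex{\delta}\ge\delta_\infty$) combined with \eqref{log2} controls the difference $\delta(z)-\delta(x)$ through $\delta_\infty$, and correspondingly controls the norm $\normadeflp{\car_Q}{n/\delta}$ for cubes of large sidelength. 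This is exactly the content of equation~(5.4) of \cite{PRa}, which is why the lemma is stated as a consequence of that computation; everything else is a clean telescoping argument plus the dictionary between the functional $a$ and the $\lp{n/\delta}$-norm of characteristic functions.
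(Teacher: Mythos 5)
The paper never proves this lemma internally: it is imported from \cite{PRa} (``deduced from the proof of Theorem 2.3, see Equation (5.4)''), so the only thing to assess is whether your reconstruction would actually close the gap, and as written it does not quite. Your skeleton (telescoping over nested cubes, $|b_{Q_{k+1}}-b_{Q_k}|\lesssim a(Q_k)\|b\|_{\mathcal{L}^1_a}$, and the dictionary $a(Q)\simeq\|\car_Q\|_{L^{n/\delta(\cdot)}}\simeq|Q|^{(\delta/n)_Q}$) is indeed the standard route, but two steps are genuinely problematic. First, the ``geometric'' tail $\sum_{k\ge0}\ell(Q_k)^{\delta(x)}\lesssim\ell(Q_0)^{\delta(x)}$ carries the constant $(1-2^{-\delta(x)})^{-1}$, which is uniform only if $\delta$ is bounded away from zero; the hypotheses as stated allow $\delta$ to vanish (take $r(\cdot)\equiv\gamma$, so that $a(Q)=|Q|^{1/\gamma-1}\|\car_Q\|_{L^{\gamma'}}\simeq1$ and $\zL(\ex{\delta})$ is BMO, for which the asserted pointwise bound is false), so your argument silently needs a positivity assumption on $\delta$ that you neither state nor verify. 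Second, your reconciliation of the $z$-side bound with the exponent $\delta(x)$ fails in the large-distance regime: when $|x-z|$ is large, \eqref{log2} does \emph{not} make $\delta(z)-\delta(x)$ small (e.g.\ $z$ near the origin with $\delta(z)$ of order one and $x$ far away with $\delta(x)$ close to $\delta_\infty$), so one cannot pass from $|x-z|^{\delta(z)}$ to $|x-z|^{\delta(x)}$ by bounding $|x-z|^{\delta(z)-\delta(x)}$. The correct mechanism is different: split each chain at sidelength $1$; for cubes with $\ell(Q)\ge1$ one has $\|\car_Q\|_{L^{n/\delta(\cdot)}}\simeq\ell(Q)^{\delta_\infty}\le\ell(Q)^{\delta(x)}$ (this is exactly where $r_\infty\le\ex{r}$, i.e.\ $\delta(\cdot)\ge\delta_\infty$, and $\ell(Q)\ge1$ enter), while the sub-unit cubes around $z$ contribute only an absolute constant, which is $\le|x-z|^{\delta(x)}$ because $|x-z|\gtrsim1$ in that regime.

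Two further points. Deferring the uniform-in-scale estimate to ``equation (5.4) of \cite{PRa}'' is circular for a blind proof, since that equation is precisely the pointwise inequality to be established; if you invoke the reference you have reproduced the paper's citation, not a proof. And the telescoping via Lebesgue differentiation only gives $b_{Q_k}\to b(x)$ at almost every $x$, so the conclusion ``for every $x,z$'' requires the usual final step of passing to the precise (continuous) representative of $b$. With the scale-$1$ splitting above, a uniform lower bound on $\delta$, and that representative fixed, your chaining argument does yield the lemma.
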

\section{Key auxiliary results}\label{S3}
In this section we give some technical lemmas that will be useful in the proof of the main results. 
\subsection{Estimates of $\normadefp{\ca}{L^{\ex{p}}(\log L)^{\ex{q}}}$}

In \cite{DHHR} the authors proved that, if $\ex{p}\inplogdern$, then   $\normadeflp{\ca}{p}\simeq|Q|^{(1/p)_Q}$ for any cube $Q$ (see [\cite{DHHR}, Lemma 4.5.3]).
Recall that $(1/p)_Q$ denotes the average $|Q|^{-1}\int_Q 1/p(x)\,dx$. 
We would like to generalize this result to the case of $L^{\ex{p}}(\log L)^{\ex{q}}$ norms, that is, estimates of $\normadefp{\ca}{L^{\ex{p}}(\log L)^{\ex{q}}}$ with $\ex{p},\ex{q}$ in certain classes of exponents.
Concretely, we prove the following result.
\medskip

\begin{propo}\label{NORMAS loglog}
	Let $\ex{p}\inplogdern$ such that $1<p^-\le p^+<\infty$ and $\ex{q}\in \mathcal{P}^{\rm loglog}(\z)$ a non-negative function.
	Then
	\begin{equation*}
	\normadefp{\ca}{L^{\ex{p}}(\log L)^{\ex{q}}}\simeq |Q|^{(1/p)_Q}(\log(e+1/|Q|))^{(q/p)_Q}.
	\end{equation*}
	for every cube $Q$ in $\z$.
\end{propo}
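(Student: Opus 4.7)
The plan is a direct modular computation at the ansatz $\lambda_Q^*:=|Q|^{(1/p)_Q}L_Q^{(q/p)_Q}$, with $L_Q:=\log(e+1/|Q|)$ and $\Psi(x,t)=t^{p(x)}(\log(e+t))^{q(x)}$. By definition of the Luxemburg norm it is enough to produce constants $0<c_1<c_2<\infty$, independent of $Q$, satisfying
\[
\int_Q\Psi\!\left(x,\frac{1}{c_1\lambda_Q^*}\right)dx\ge 1\ge \int_Q\Psi\!\left(x,\frac{1}{c_2\lambda_Q^*}\right)dx;
\]
monotonicity in $t$ then forces $c_1\lambda_Q^*\le\normadefp{\ca}{L^{\ex{p}}(\log L)^{\ex{q}}}\le c_2\lambda_Q^*$, which is the asserted $\simeq$.

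Both inequalities will rest on two pointwise identities valid uniformly for $x\in Q$ and arbitrary $c>0$. \emph{Estimate (A):} $\log(e+1/(c\lambda_Q^*))\simeq L_Q$. For $|Q|\le 1$ one has $L_Q\simeq \log(1/|Q|)$ and $-\log(c\lambda_Q^*)=\log(1/c)+(1/p)_Q\log(1/|Q|)-(q/p)_Q\log L_Q$; the middle term is $\simeq L_Q$ and dominates since $(1/p)_Q\ge 1/p^+$ and $(q/p)_Q\log L_Q=O(\log L_Q)$. For $|Q|\ge 1$, $L_Q$ lies in $[1,\log(e+1)]$ and $\lambda_Q^*\ge|Q|^{1/p^+}$ is large, so both quantities are $\simeq 1$. \emph{Estimate (B):} $(\lambda_Q^*)^{p(x)}\simeq |Q|\cdot L_Q^{q(x)}$ for every $x\in Q$. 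Splitting $(\lambda_Q^*)^{p(x)}=|Q|^{p(x)(1/p)_Q}L_Q^{p(x)(q/p)_Q}$, the first factor satisfies $|Q|^{p(x)(1/p)_Q}\simeq |Q|$ by raising the standard log-Hölder identity $|Q|^{1/p(x)}\simeq|Q|^{(1/p)_Q}$ (see \cite[Lemma~4.5.3]{DHHR}) to the bounded power $p(x)$. For the second factor I would invoke Remark~\ref{plog y ploglog}, which gives $\ex{(q/p)}\in\mathcal{P}^{\rm loglog}(\z)$; averaging its loglog-Hölder modulus over $y\in Q$ and using $|x-y|\lesssim|Q|^{1/n}$, so that $\log(e+1/|x-y|)\gtrsim L_Q$, yields
\[
\bigl|(q/p)_Q-(q/p)(x)\bigr|\lesssim \frac{1}{\log(e+L_Q)},\qquad x\in Q.
\]
Multiplying by $p(x)\le p^+$ and using $L_Q\ge 1$, $\log L_Q\le\log(e+L_Q)$ then gives $L_Q^{|p(x)(q/p)_Q-q(x)|}\le \exp(C\log L_Q/\log(e+L_Q))\le e^C$ uniformly, whence $L_Q^{p(x)(q/p)_Q}\simeq L_Q^{q(x)}$.

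Once (A) and (B) are in place, the integrand is pointwise $\simeq c^{-p(x)}|Q|^{-1}$ on $Q$, so
\[
\int_Q \Psi\!\left(x,\frac{1}{c\lambda_Q^*}\right)dx \simeq \frac{1}{|Q|}\int_Q c^{-p(x)}\,dx\in \bigl[c^{-p^+},c^{-p^-}\bigr]
\]
up to constants independent of $Q$. Choosing $c_2$ sufficiently large produces the upper bound for the norm, while $c_1$ sufficiently small produces the lower bound, finishing the proof. The main obstacle is Estimate (B), specifically the uniform cancellation $L_Q^{p(x)(q/p)_Q}\simeq L_Q^{q(x)}$: because the base $L_Q$ may be as large as $\log(1/|Q|)$, the exponent deviation must be controlled at the $1/\log(e+L_Q)$ scale. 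This is exactly the strength of the $\mathcal{P}^{\rm loglog}$ hypothesis transferred to $\ex{(q/p)}$ via Remark~\ref{plog y ploglog}; everything else in the computation is a routine consequence of the log-Hölder machinery from \cite{DHHR}.
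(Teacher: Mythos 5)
Your route (evaluating the modular directly at the ansatz $\lambda_Q^*=|Q|^{(1/p)_Q}L_Q^{(q/p)_Q}$, $L_Q:=\log(e+1/|Q|)$) is genuinely different from the paper's proof, which never computes the modular at the candidate value: the paper tests $\ca$ by duality against an auxiliary function built from $\phi^{-1}_{p'(x),q(x)}(1/|Q|)$, and combines the Musielak--Orlicz H\"older inequality \eqref{holder fi y fi conjugada}, the Jensen-type Lemmas \ref{lema 4.5.1} and \ref{lema 4.5.2}, Lemma \ref{prop loglog}, and the boundedness of $M$ on $L^{\ex{p}}(\log L)^{\ex{q}}(\z)$ (Theorem \ref{teo max de HL en llogl}). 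Your plan would be more elementary if it worked, but as written it has a gap, and not where you flagged it.

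The problematic step is the first factor of your Estimate (B). The logarithmic cancellation $L_Q^{p(x)(q/p)_Q}\simeq L_Q^{q(x)}$ is indeed fine: for $|Q|\le 1$ it follows from the $\mathcal{P}^{\rm loglog}$ modulus of $\ex{(q/p)}$ exactly as in Lemma \ref{prop loglog}, and for $|Q|\ge 1$ it is trivial because $L_Q\simeq 1$. What fails is the pointwise claim $|Q|^{1/p(x)}\simeq|Q|^{(1/p)_Q}$ uniformly for $x\in Q$ over \emph{all} cubes: this is not what [\cite{DHHR}, Lemma 4.5.3] asserts (that lemma is a statement about the norm $\normadeflp{\ca}{p}$), and it is false for large cubes. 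Global log-H\"older continuity controls $|1/p(x)-(1/p)_Q|$ by $C/\log(e+1/\ell(Q))$ only when $\ell(Q)\lesssim 1$; the decay condition \eqref{log2} imposes no constraint near the origin, so for a huge cube $Q$ containing the origin one can have $|1/p(x)-(1/p)_Q|\ge c_0>0$ at some $x\in Q$ (take $p\equiv 2$ near $0$ and $p\to 3$ at infinity), whence $|Q|^{|1/p(x)-(1/p)_Q|}\ge|Q|^{c_0}\to\infty$ and $(\lambda_Q^*)^{p(x)}\not\simeq|Q|L_Q^{q(x)}$. Consequently the pointwise comparison of the integrand with $c^{-p(x)}|Q|^{-1}$, on which your whole modular computation rests, is unjustified for $|Q|\ge 1$ (the proposition itself is still true there, but for a different reason: the deviating set is small, which is precisely what the decay condition gives in integrated form). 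The natural repair is to split cases: for $|Q|\ge 1$ observe that $L_Q\simeq 1$ and that $\Psi(x,t)\simeq t^{p(x)}$ on the relevant bounded range of $t$, so the claim reduces to the known norm equivalence $\normadeflp{\ca}{p}\simeq|Q|^{(1/p)_Q}$ of \cite{DHHR} (whose proof for large cubes uses \eqref{log2} in integral, not pointwise, form); your pointwise argument is then valid, essentially as written, for $|Q|\le 1$. A minor further point: the constants in your Estimate (A) depend on $c$, so you should fix $c\ge 1$ (resp.\ $c\le 1$) and use monotonicity of the modular before choosing $c_2$ (resp.\ $c_1$); this is harmless but should be said.
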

\medskip
\begin{remark}\label{remark NORMAS loglog}
	In particular, when $\ex{p}=\ex{q}$ with $1<p^-\le p^+<\infty$, 
	\begin{equation}\label{NORMAS loglog p igual q}
	\normadefp{\ca}{L^{\ex{p}}(\log L)^{\ex{p}}}\simeq |Q|^{(1/p)_Q}\log(e+1/|Q|)
	\end{equation}
	and if, in addition, $\ex{q}\equiv0$, 
	\begin{equation}\label{NORMAS loglog q igual 0}
	\normadefp{\ca}{L^{\ex{p}}}\simeq |Q|^{(1/p)_Q}.
	\end{equation}
	Since $\psi(t)=t\log(e+t)$ is an invertible Young function, is easy to see that 
	\begin{equation}\label{norma logl con uno}
	\normadefp{\ca}{L\log L}\simeq|Q|\log(e+1/|Q|).
	\end{equation}
\end{remark}

In order to achieve Proposition \ref{NORMAS loglog} we need the following lemmas.    

\begin{lema}\label{prop loglog}
	Let $\ex{q}\in\mathcal{P}^{\rm loglog}(\z)$ and let $Q$ be a cube in $\z$. Then, for every $x,y\in Q$,
	\begin{equation*}
	(\log(e+1/|Q|))^{q(x)}\simeq (\log(e+1/|Q|))^{q(y)}.
	\end{equation*}
\end{lema}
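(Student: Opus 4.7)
The plan is to show that the ratio $(\log(e+1/|Q|))^{q(x)-q(y)}$ is bounded above and below by positive constants independent of $Q$, $x$, $y \in Q$; by symmetry in $x,y$ it suffices to bound it from above, and then it suffices to bound the logarithm
$$
(q(x)-q(y))\,\log\log(e+1/|Q|).
$$
Set $A:=\log(e+1/|Q|)$. We distinguish two regimes according to the size of $Q$.

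\textbf{Case 1: $|Q|\ge 1$.} Then $A\le \log(e+1)$, so $\log A$ is bounded. Since $\ex{q}\in\mathcal{P}^{\rm loglog}(\z)$ we have $q^+<\infty$, so $|q(x)-q(y)|\le 2q^+$, and the product is bounded by an absolute constant.

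\textbf{Case 2: $|Q|<1$.} The loglog-H\"older condition gives
$$
|q(x)-q(y)|\le \frac{C}{\log(e+\log(e+1/|x-y|))}.
$$
Since $x,y\in Q$, one has $|x-y|\le\sqrt{n}\,\ell(Q)=\sqrt{n}\,|Q|^{1/n}$, and since $|Q|<1$ this is $\le \sqrt{n}$, so in particular $1/|x-y|\ge 1/(\sqrt{n}\,|Q|^{1/n})$. Hence
$$
\log(e+1/|x-y|)\gtrsim \log(1/|Q|^{1/n})=\tfrac{1}{n}\log(1/|Q|),
$$
which in turn (using $|Q|<1$ so that $\log(1/|Q|)>0$ and $\log(e+1/|Q|)\simeq \log(1/|Q|)$ for $|Q|$ sufficiently small, adjusting constants otherwise) yields
$$
\log(e+\log(e+1/|x-y|))\gtrsim \log\log(e+1/|Q|).
$$
Combining, the product $(q(x)-q(y))\log\log(e+1/|Q|)$ is bounded by an absolute constant.

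Therefore $(\log(e+1/|Q|))^{q(x)-q(y)}$ is comparable to $1$ uniformly in $x,y\in Q$ and in $Q$, which is precisely the claimed equivalence. The only delicate point, and what I would take care to verify explicitly, is the interplay between $\log(e+1/|Q|)$ and $\log(1/|Q|)$ in the transition regime $|Q|\approx 1$, and the sharp form of the inequality $|x-y|\le \sqrt{n}\,\ell(Q)$ used to transfer the loglog modulus of continuity of $q$ to a control by $|Q|$; these are the places where the constants must be tracked but no substantive obstacle arises.
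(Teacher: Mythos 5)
Your argument is correct and follows essentially the same route as the paper: both bound $|q(x)-q(y)|\,\log\log(e+1/|Q|)$ by combining the loglog-H\"older condition with $|x-y|\lesssim |Q|^{1/n}$ for $x,y\in Q$. The only difference is cosmetic: the paper keeps the ``$e+$'' inside every logarithm and proves the uniform elementary inequality $\log(e+\log(e+1/|Q|))\le\kappa\,\log\bigl(e+\log\bigl(e+1/(C_n|Q|^{1/n})\bigr)\bigr)$ valid for all cubes, which avoids your case split at $|Q|=1$ and the transition-regime constant chasing you flag at the end.
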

\medskip

\begin{proof}[Proof of Lemma \ref{prop loglog}]
	It is enough to show that there exists a positive constant $C$ such that
	\begin{equation*}
	(\log(e+1/|Q|))^{|q(x)-q(y)|}\leq C,
	\end{equation*}
	or equivalenty
	\begin{equation}\label{prop loglog 1}
	\exp[|q(x)-q(y)|\log(\log (e+1/|Q|))]\leq C.
	\end{equation}
	Since $\ex{q}\in\mathcal{P}^{\rm loglog}(\z)$,
	\begin{align}\label{prop loglog 2}
	\exp(|q(x)-q(y)|\log(\log (e+1/|Q|)))
	\leq\exp\left(C\frac{\log(e+\log (e+1/|Q|))}{\log(e+\log (e+1/|x-y|))}\right).
	\end{align}
	Since $x,y\in Q$, there exists a constant $C_n>1$ such that $|x-y|\leq C_n|Q|^{1/n}$. Then
	$$\log\left(e+\log \left(e+\frac{1}{C_n|Q|^{1/n}}\right)\right)\leq \log\left(e+\log \left(e+\frac{1}{|x-y|}\right)\right).$$
	If we prove that
	\begin{equation}\label{prop loglog 3}
	\log\left(e+\log\left(e+\frac{1}{|Q|}\right)\right)\leq \kappa \log\left(e+\log \left(e+\frac{1}{C_n|Q|^{1/n}}\right)\right)
	\end{equation}
	for some positive constant $\kappa$ then, by (\ref{prop loglog 2}), we conclude (\ref{prop loglog 1}).
	
	Let us prove inequality (\ref{prop loglog 3}). 
	Note that, since $C_n\geq1$,
	\begin{align*}
	\log\left(e+\frac{1}{|Q|}\right)
	&\lesssim \log\left(e+\frac{1}{|Q|^{1/n}}\right)
	\le \log\left(C_n\,e+\frac{C_n}{C_n|Q|^{1/n}}\right)\\
	&\le \log\left(C_n\right)\log\left(e+\frac{1}{C_n|Q|^{1/n}}\right)+\log\left(e+\frac{1}{C_n|Q|^{1/n}}\right)\\
	&\le (1+\log C_n)\log\left(e+\frac{1}{C_n|Q|^{1/n}}\right)\\
	&:= \kappa_1 \log\left(e+\frac{1}{C_n|Q|^{1/n}}\right).
	\end{align*}
	Thus, by similar argument, since $\kappa_1\ge1$, 
	\begin{align*}
	\log\left(e+\log\left(e+\frac{1}{|Q|}\right)\right)
	&\leq \log\left(e+\kappa_1\log\left(e+\frac{1}{C_n|Q|^{1/n}}\right)\right)\\
	&\leq (1+\log\kappa_1) \log\left(e+\log\left(e+\frac{1}{C_n|Q|^{1/n}}\right)\right)\\
	&:= \kappa \log\left(e+\log\left(e+\frac{1}{C_n|Q|^{1/n}}\right)\right).
	\end{align*}
\end{proof}

Let $\ex{\alpha}$ and $\ex{\theta}$ be two functions with $0<\alpha^-\le \alpha^+<\infty$ and $0\le \theta^-\le \theta^+<\infty$  and $x\in\z$, we denote
$$\phi_{\alpha(x),\theta(x)}(t):=t^{\alpha(x)}(\log(e+t))^{\theta(x)}.$$
Note that, for every fixed $x\in\z$, $\phi^{-1}_{\alpha(x),\theta(x)}(\cdot)$ is a Young function, then it is not difficult to prove that 
\begin{equation}\label{inversa fi p q}
\phi^{-1}_{\alpha(x),\theta(x)}(t)\simeq t^{1/\alpha(x)}(\log(e+t))^{-\theta(x)/\alpha(x)}
\end{equation}
(see, for example, \cite{raoren}).
If, in addition, $\alpha^->1$, 
\begin{equation}\label{conjugada fi p q}
\phi^{*}_{\alpha(x),\theta(x)}(t)\simeq t^{\alpha'(x)}(\log(e+t))^{-\theta(x)/(\alpha(x)-1)}.
\end{equation}
The constants involved in equations \eqref{inversa fi p q} and \eqref{conjugada fi p q} only depend on the extremes of the exponents $\ex{\alpha}$ and $\ex{\theta}$.
\medskip    

\begin{lema}\label{lema 4.5.1}
	Let $\ex{p}\inplogdern$ such that $1\le \ex{p}\le p^+<\infty$ and $\ex{q}\in \mathcal{P}^{\rm loglog}(\z)$ a non-negative function.
	Then for every cube $Q\subset\z$ we have
	\begin{equation*}
	\phi^{-1}_{\frac{1}{(1/p)_Q},\frac{(q/p)_Q}{(1/p)_Q}}(1/|Q|)\lesssim
	\fint_Q \phi^{-1}_{p(x),q(x)}(1/|Q|)\,dx.
	\end{equation*}
\end{lema}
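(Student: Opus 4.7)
My plan is to reduce the inequality, via the explicit formula \eqref{inversa fi p q}, to an easily handled form and then close with Jensen's inequality.

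First I would apply \eqref{inversa fi p q} to both sides. The constants in that equivalence depend only on the extremes of the exponents, which are uniformly controlled by $p^+$, $p^-$ and $q^+$, so up to multiplicative constants the claim becomes
$$|Q|^{-(1/p)_Q}(\log(e+1/|Q|))^{-(q/p)_Q}\lesssim \fint_Q |Q|^{-1/p(x)}(\log(e+1/|Q|))^{-q(x)/p(x)}\,dx.$$

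Next I would observe that the hypothesis $\ex{p}\inplogdern$ with $p^+<\infty$ together with $\ex{q}\in\mathcal{P}^{\rm loglog}(\z)$ places $\ex{(q/p)}$ in $\mathcal{P}^{\rm loglog}(\z)$ by Remark \ref{plog y ploglog} (and $q/p$ is non-negative, since $q\ge 0$ and $p\ge 1$). Applying Lemma \ref{prop loglog} to $q/p$ then gives, uniformly for $x\in Q$,
$$(\log(e+1/|Q|))^{-q(x)/p(x)}\simeq (\log(e+1/|Q|))^{-(q/p)_Q},$$
so I can factor this essentially constant quantity out of the integral on the right. The inequality thus collapses to the power-only estimate
$$|Q|^{-(1/p)_Q}\lesssim \fint_Q |Q|^{-1/p(x)}\,dx.$$

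Finally I would finish with a one-line convexity argument: setting $s=-\log|Q|\in\zR$, the function $t\mapsto e^{st}$ is convex on $\zR$ (its second derivative is $s^2 e^{st}\ge 0$), so by Jensen's inequality applied to $x\mapsto 1/p(x)$ on $Q$,
$$\fint_Q |Q|^{-1/p(x)}\,dx=\fint_Q e^{s/p(x)}\,dx\ge e^{s(1/p)_Q}=|Q|^{-(1/p)_Q}.$$
The potentially delicate step one might anticipate — using the log-Hölder decay at infinity to handle cubes with $|Q|\ge 1$ — is bypassed entirely: the logarithmic factor is absorbed by Lemma \ref{prop loglog} (which already handles both small and large cubes), and the remaining power inequality is pure convexity, valid for any $|Q|>0$. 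Accordingly I do not expect any serious obstacle; the content is essentially combining \eqref{inversa fi p q}, Lemma \ref{prop loglog}, Remark \ref{plog y ploglog}, and Jensen's inequality.
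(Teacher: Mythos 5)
Your proof is correct and follows essentially the same route as the paper's: reduce both sides via \eqref{inversa fi p q}, use Remark \ref{plog y ploglog} together with Lemma \ref{prop loglog} to treat the logarithmic factor as essentially constant on $Q$, and close with Jensen's inequality applied to the convex exponential in the exponent $1/p(x)$. The only cosmetic difference is the order of operations: the paper applies Jensen twice (to the maps $h$ and $g_x$ in the exponents of the power and of the log factor) and invokes Lemma \ref{prop loglog} at the end, whereas you factor the log term out first; the ingredients and the substance are identical.
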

\medskip

\begin{proof}
	Let $Q\subset\z$ a cube.
	Since 
	$$0< \frac{1}{(1/p)_Q}\le p^+<\infty \esp\text{and}\esp
	0\le \frac{(q/p)_Q}{(1/p)_Q}\le q^+p^+<\infty,$$ 
	by equation \eqref{inversa fi p q} with $\ex{\alpha}:=1/(1/p)_Q$ and $\ex{\theta}:=(q/p)_Q/(1/p)_Q$, we have 
	\begin{equation}\label{lema 4.5.1 1}
	\phi^{-1}_{\frac{1}{(1/p)_Q},\frac{(q/p)_Q}{(1/p)_Q}}(1/|Q|)
	\simeq
	(1/|Q|)^{(1/p)_Q}\left(\log(e+(1/|Q|))\right)^{(q/p)_Q}.
	\end{equation}
	Given $x\in Q$, define the mappings
	$$h(z):=(1/|Q|)^z\left(\log(e+(1/|Q|))\right)^{-(q/p)_Q}$$
	and 
	$$g_x(z):=(1/|Q|)^{1/p(x)}\left(\log(e+(1/|Q|))\right)^{-z}$$
	for $z\ge0$. 
	Note that, as functions of $z$, the mappings $h$ and $g_x$ are convex. 
	Thus, by \eqref{lema 4.5.1 1} and applying Jensen's inequality twice we have that
	\begin{align*}
	\phi^{-1}_{\frac{1}{(1/p)_Q},\frac{(q/p)_Q}{(1/p)_Q}}(1/|Q|)
	&\simeq h\left(\left(\frac{1}{p}\right)_Q\right)
	\leq\fint_Qh\left(\frac{1}{p(x)}\right)\,dx\\
	&=\fint_Q(1/|Q|)^{1/p(x)}\left(\log(e+(1/|Q|))\right)^{-(q/p)_Q}\,dx\\
	&=\fint_Qg_x\left(\left(\frac{q}{p}\right)_Q\right)\,dx
	\leq\fint_Q\fint_Qg_x\left(\frac{q(y)}{p(y)}\right)\,dy\,dx\\
	&=\fint_Q\fint_Q\frac{(1/|Q|)^{1/p(x)}}{(\log(e+1/|Q|))^{q(y)/p(y)}}\,dy\,dx.
	\end{align*}
	From Remark (\ref{plog y ploglog}) we can apply Lemma \ref{prop loglog} with $\ex{q}:=\ex{(q/p)}$ to obtain that
	\begin{align*}
	\phi^{-1}_{\frac{1}{(1/p)_Q},\frac{(q/p)_Q}{(1/p)_Q}}(1/|Q|)
	&\lesssim\fint_Q\frac{(1/|Q|)^{1/p(x)}}{(\log(e+1/|Q|))^{q(x)/p(x)}}\,dx
	\simeq\fint_Q \phi^{-1}_{p(x),q(x)}(1/|Q|)\,dx,
	\end{align*}
	where we have used equation \eqref{inversa fi p q} with $\ex{\alpha}:=\ex{p}$ and $\ex{\theta}:=\ex{q}$.
\end{proof}
\medskip
\begin{lema}\label{lema 4.5.2}
	Let $\ex{p},\ex{q}$ such that $1<p^-\le p^+<\infty$ and $0\le q^-\le q^+<\infty$ and let $Q$ be a cube in $\z$. Then for every $t\geq0$,
	\begin{equation*}
	t\lesssim
	\fint_Q \phi^{-1}_{p(x),q(x)}(t)\,dx\,\,
	\fint_Q (\log(e+t))^{q(x)}\phi^{-1}_{p'(x),q(x)}(t)\,dx.
	\end{equation*}
\end{lema}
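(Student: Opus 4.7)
The approach is to combine a pointwise factorization of $t$ in terms of generalized Young inverses with the Cauchy--Schwarz inequality on averages over $Q$.

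First I would apply the asymptotic formula \eqref{inversa fi p q} with $(\ex{\alpha},\ex{\theta}) = (\ex{p},\ex{q})$ and also with $(\ex{\alpha},\ex{\theta}) = (\ex{p'},\ex{q})$. The hypothesis $1 < p^- \le p^+ < \infty$ guarantees that both $\ex{p}$ and its conjugate $\ex{p'}$ have bounded extremes, and the paper explicitly notes after \eqref{conjugada fi p q} that the implicit constants in \eqref{inversa fi p q} depend only on these extremes. Multiplying the two resulting expressions and using $1/p(x)+1/p'(x)=1$ together with $q(x)/p(x)+q(x)/p'(x)=q(x)$, I arrive at the pointwise identity
$$\phi^{-1}_{p(x),q(x)}(t)\,(\log(e+t))^{q(x)}\,\phi^{-1}_{p'(x),q(x)}(t) \simeq t,$$
valid for every $x \in \z$ and every $t \ge 0$, with constants depending only on $p^{\pm}$ and $q^{\pm}$.

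Next I would set $f(x) := \phi^{-1}_{p(x),q(x)}(t)$ and $g(x) := (\log(e+t))^{q(x)}\phi^{-1}_{p'(x),q(x)}(t)$, so that $\sqrt{f(x)g(x)} \gtrsim \sqrt{t}$ pointwise on $Q$. Averaging and applying the Cauchy--Schwarz inequality would then give
$$\sqrt{t} \lesssim \fint_Q \sqrt{f(x)g(x)}\,dx \leq \left(\fint_Q f(x)\,dx\right)^{1/2}\left(\fint_Q g(x)\,dx\right)^{1/2},$$
and squaring both sides would deliver the statement of the lemma.

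I do not foresee any serious obstacle. The structural content is the factorization $fg \simeq t$, while Cauchy--Schwarz merely bridges the pointwise estimate and the product of averages that appears in the conclusion. The only detail meriting attention is the uniformity in $x$ of the constants in \eqref{inversa fi p q}, but this is already recorded in the paper right after \eqref{conjugada fi p q}.
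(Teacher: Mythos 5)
Your proof is correct and takes essentially the same route as the paper: the heart of both arguments is the pointwise factorization $\phi^{-1}_{p(x),q(x)}(t)\,(\log(e+t))^{q(x)}\,\phi^{-1}_{p'(x),q(x)}(t)\simeq t$ obtained from \eqref{inversa fi p q} with exponents $\ex{p}$ and $\ex{p'}$. The only (cosmetic) difference is the final averaging step: the paper divides and applies Jensen's inequality to $s\mapsto 1/s$, while you apply Cauchy--Schwarz to $\sqrt{fg}$; the two are interchangeable here and yield the same conclusion.
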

\begin{proof}
	It is enough to prove the case $t > 0$.
	Since, by equation \eqref{inversa fi p q}, 
	\begin{align*}
	\phi^{-1}_{p(x),q(x)}(t)\,\,\phi^{-1}_{p'(x),q(x)}(t)
	&\simeq\frac{t^{1/p(x)}}{(\log(e+t))^{q(x)/p(x)}}
	\frac{t^{1/p'(x)}}{(\log(e+t))^{q(x)/p'(x)}}\\
	&=\frac{t}{(\log(e+t))^{q(x)}},
	\end{align*}
	then, by Jensen's inequality, we have
	\begin{align*}
	\fint_Q \phi^{-1}_{p(x),q(x)}(t)\,dx
	&\simeq t\fint_Q \frac{1}{(\log(e+t))^{q(x)}\phi^{-1}_{p'(x),q(x)}(t)}\,dx\\
	&\gtrsim t\frac{1}{\fint_Q(\log(e+t))^{q(x)}\phi^{-1}_{p'(x),q(x)}(t)\,dx}.
	\end{align*}
\end{proof}

\begin{proof}[Proof of Proposition~\ref{NORMAS loglog}]
	Let $Q$ be a cube in $\z$, define
	$$f(x):=\ca(x)\phi^{-1}_{p(x),q(x)}(1/|Q|),\esp x\in\z$$
	and
	$$g(x):=\ca(x)(\log(e+1/|Q|))^{q(x)}\phi^{-1}_{p'(x),q(x)}(1/|Q|),\esp x\in\z.$$
	Note that $\normadefp{f}{L^{\ex{p}}(\log L)^{\ex{q}}}\leq 1$ and $\normadefp{g}{L^{\ex{p'}}(\log L)^{-\ex{q}/(\ex{p}-1)}}\leq C_2$ with $C_2$ a positive constant independent of $Q$. Indeed, since by \eqref{inversa fi p q}, 
	\begin{align*}
	\int_{\z} \phi_{p(x),q(x)}(f(x))\,dx
	&=\int_{Q}\phi_{p(x),q(x)}\left(\phi^{-1}_{p(x),q(x)}(1/|Q|)\right)\,dx
	\simeq 1,
	\end{align*}
	the estimation for $f$ is clear. 
	Note that, for $x\in Q$, by \eqref{inversa fi p q},
	\begin{align*}
	\log(e+g(x))
	&=\log\left[e+(\log(e+1/|Q|))^{q(x)}\phi^{-1}_{p'(x),q(x)}(1/|Q|)\right]\\
	&\simeq\log\left[e+(\log(e+1/|Q|))^{q(x)/p(x)}(1/|Q|)^{1/p'(x)}\right]\\
	&\ge\log\left(e+(1/|Q|)^{1/p'(x)}\right)
	\ge\frac{1}{(p')^+}\log\left(e+1/|Q|\right)\\
	&\gtrsim\log(e+1/|Q|),
	\end{align*}
	since $(p')^+<\infty$.
	Thus we have that
	\begin{align*}
	\int_Q\frac{g^{p'(x)}}{(\log(e+g))^{q(x)/(p(x)-1)}}\,dx
	&\lesssim\int_Q\frac{g^{p'(x)}}{(\log(e+1/|Q|))^{q(x)/(p(x)-1)}}\,dx\\
	&\lesssim\fint_Q(\log(e+1/|Q|))^{q(x)\left(p'(x)-\frac{1}{p(x)-1}-1\right)}\,dx
	\lesssim 1,
	\end{align*}
	since $p'(x)-{1}/({p(x)-1})-1=0$.
	
	By Lemma \ref{lema 4.5.2} with $t:=1/|Q|$ we have
	\begin{align}\label{normasloglog12}
	1
	&\lesssim|Q|
	\fint_Q \phi^{-1}_{p(x),q(x)}(1/|Q|)\,dx
	\,\fint_Q (\log(e+1/|Q|))^{q(x)}\,\phi^{-1}_{p'(x),q(x)}(1/|Q|)\,dx\nonumber\\
	&=
	\fint_Q f\,dx
	\,\int_{\z} \ca(x)g(x)\,dx.
	\end{align}
	We can apply H\"{o}lder's inequality  \eqref{holder fi y fi conjugada} with $\Psi(x,t):=t^{p(x)}(\log(e+t))^{q(x)}$ and $\Psi^*(x,t):=t^{p'(x)}(\log(e+t))^{-q(x)/(p(x)-1)}$ (see equation \eqref{conjugada fi p q}), to obtain             	
	\begin{align}\label{normasloglog1}
	1&\lesssim
	\fint_Q f(x)\,dx
	\normadeflplogq{\ca}{p}{q}
	\normadefp{g}{L^{\ex{p'}}(\log L)^{-\ex{q}/(\ex{p}-1)}}\nonumber\\
	&\lesssim \normadeflplogq{f_Q\ca}{p}{q}
	\le \normadeflplogq{Mf}{p}{q}            
	\nonumber\\
	&\lesssim \normadeflplogq{f}{p}{q}\lesssim 1
	\end{align}
	where we have used Theorem \ref{teo max de HL en llogl}.
	
	Since $\fint_Q f(x)\,dx=\fint_Q\phi^{-1}_{p(x),q(x)}(1/|Q|)\,dx> 0$, from equation \eqref{normasloglog1} we obtain that
	\begin{align}\label{normas loglog 1}
	&|Q|\fint_Q(\log(e+1/|Q|))^{q(x)}\,\phi^{-1}_{p'(x),q(x)}(1/|Q|)\,dx\nonumber\\               &\quad\quad  \lesssim\normadefp{\ca}{L^{\ex{p}}(\log L)^{\ex{q}}}
	\lesssim \frac{1}{\fint_Q\phi^{-1}_{p(x),q(x)}(1/|Q|)\,dx}.
	\end{align}
	By Lemma \ref{lema 4.5.1} we can estimate the right-hand side of inequality \eqref{normas loglog 1} using equation \eqref{inversa fi p q} as follow
	\begin{align*}
	\frac{1}{\fint_Q\phi^{-1}_{p(x),q(x)}(1/|Q|)\,dx}
	&\lesssim\frac{1}{\phi^{-1}_{\frac{1}{(1/p)_Q},\frac{(q/p)_Q}{(1/p)_Q}}(1/|Q|)}\\
	&\simeq|Q|^{(1/p)_Q}(\log(e+1/|Q|))^{(q/p)_Q}.
	\end{align*}
	In order to estimate the left-hand side of inequality \eqref{normas loglog 1}, if $x\in Q$, by Jensen's inequality and Lemma \ref{prop loglog},
	\begin{align*}
	(\log(e+1/|Q|))^{q_Q}
	&\leq\fint_Q(\log(e+1/|Q|))^{q(y)}\,dy\\
	&\simeq (\log(e+1/|Q|))^{q(x)}.
	\end{align*}
	Thus by Lemma \ref{lema 4.5.1} we have
	\begin{align*}
	&|Q|\fint_Q(\log(e+1/|Q|))^{q(x)}\,\phi^{-1}_{p'(x),q(x)}(1/|Q|)\,dx\\
	&\quad\quad\gtrsim|Q|(\log(e+1/|Q|))^{q_Q}\,\fint_Q\phi^{-1}_{p'(x),q(x)}(1/|Q|)\,dx\\
	&\quad\quad\gtrsim|Q|(\log(e+1/|Q|))^{q_Q}\,\,\phi^{-1}_{\frac{1}{(1/p')_Q},\frac{(q/p')_Q}{(1/p')_Q}}(1/|Q|)\\
	&\quad\quad\gtrsim|Q|(\log(e+1/|Q|))^{q_Q}\,|Q|^{-(1/p')_Q}\,(\log(e+1/|Q|))^{-(q/p')_Q}\\
	&\quad\quad\simeq |Q|^{(1/p)_Q}\,(\log(e+1/|Q|))^{(q/p)_Q}
	\end{align*}
\end{proof}

\begin{cor}\label{corollary 3.5}
	Let $\ex{p}\inplogdern$ with $p^+<\infty$ and let $Q$ be a cube in $\z$. Then
	\begin{equation*}
	\fint_Q|Q|^{1/p(x)}\,dx\lesssim\normadefp{\ca}{\ex{p}}.
	\end{equation*}
\end{cor}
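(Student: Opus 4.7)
The plan is to deduce the corollary from Proposition~\ref{NORMAS loglog}. Invoking its special case $\ex{q}\equiv 0$, which is recorded as equation~\eqref{NORMAS loglog q igual 0} in Remark~\ref{remark NORMAS loglog}, we have $\normadefp{\ca}{\ex{p}}\simeq |Q|^{(1/p)_Q}$. Consequently, the statement reduces to showing
$$\fint_Q |Q|^{1/p(x)}\,dx \;\lesssim\; |Q|^{(1/p)_Q},$$
or, writing $a(x):=1/p(x)-(1/p)_Q$, equivalently $\fint_Q |Q|^{a(x)}\,dx\lesssim 1$. Note that Jensen's inequality applied to $t\mapsto e^t$ gives the reverse inequality $|Q|^{(1/p)_Q}\le\fint_Q|Q|^{1/p(x)}\,dx$ for free, so the content of the corollary is that the two quantities are actually comparable; this comparability is a manifestation of the log-Hölder hypothesis on $\ex{p}$.

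Since $|Q|^{a(x)}=\exp(a(x)\log|Q|)$, I would in fact prove the stronger pointwise bound $|a(x)|\cdot|\log|Q||\lesssim 1$ for a.e.\ $x\in Q$, with constant depending only on $n$ and the log-Hölder constants of $\ex{p}$. For $|Q|\le 1$ this is immediate from the local log-Hölder condition for $1/\ex{p}$: using $|x-y|\le\sqrt{n}|Q|^{1/n}$ for $x,y\in Q$, one has $|1/p(x)-1/p(y)|\le C/\log(e+1/|x-y|)\lesssim 1/|\log|Q||$, and averaging in $y\in Q$ yields $|a(x)|\,|\log|Q||\lesssim 1$.

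For $|Q|>1$ the local log-Hölder condition is insufficient, and the decay condition \eqref{log2} at infinity must be combined with it. I would split $Q$ into a bounded portion near the origin, where the local log-Hölder condition gives a direct estimate, and a ``far'' portion on which \eqref{log2} implies that $1/p(\cdot)$ is close to $1/p_\infty$; a routine integration then yields $|(1/p)_Q-1/p_\infty|\lesssim 1/\log|Q|$, together with the analogous bound $|1/p(x)-1/p_\infty|\lesssim 1/\log|Q|$ on the far part. Combined, these give $|a(x)|\cdot|\log|Q||\lesssim 1$ on the far part, while the near part contributes a bounded term by the local log-Hölder estimate.

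Once the uniform pointwise bound $|Q|^{a(x)}\lesssim 1$ on $Q$ is established, the desired average inequality follows by integration. The main obstacle is the large-cube case: for cubes that straddle a bounded region and the region at infinity, neither log-Hölder condition alone suffices, and combining them cleanly is the only delicate point of the argument. This is, however, a standard technique in variable-exponent Lebesgue spaces (cf.\ [\cite{DHHR}, Lemma 4.5.3]).
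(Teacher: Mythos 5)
Your high-level route is genuinely different from the paper's: the paper never passes through the formula $\normadefp{\ca}{\ex{p}}\simeq|Q|^{(1/p)_Q}$; it simply reuses the intermediate inequality \eqref{normasloglog1} from the proof of Proposition \ref{NORMAS loglog} with $\ex{p}:=\ex{p'}$, $\ex{q}:=0$, i.e. $\normadefp{\ca}{\ex{p'}}\fint_Q(1/|Q|)^{1/p'(x)}\,dx\lesssim1$, rewrites $(1/|Q|)^{1/p'(x)}=|Q|^{1/p(x)}/|Q|$, and concludes with Lemma \ref{p en plog}. Your reduction to proving $\fint_Q|Q|^{1/p(x)}\,dx\lesssim|Q|^{(1/p)_Q}$ is legitimate in principle (only the bound $|Q|^{(1/p)_Q}\lesssim\normadefp{\ca}{\ex{p}}$ is needed), though note that \eqref{NORMAS loglog q igual 0} is recorded under the extra hypothesis $1<p^-$, while Corollary \ref{corollary 3.5} assumes only $p^+<\infty$; you would want to cite [\cite{DHHR}, Lemma 4.5.3], quoted at the beginning of Section \ref{S3}, instead. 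Your small-cube argument is correct.

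The genuine gap is in the large-cube case. First, the ``stronger pointwise bound'' $|1/p(x)-(1/p)_Q|\,|\log|Q||\lesssim1$ for a.e.\ $x\in Q$ is false: for a huge cube containing the origin, $1/p$ may differ from $1/p_\infty$ by a fixed amount near $0$, so $|1/p(x)-(1/p)_Q|\simeq1$ there while $\log|Q|\to\infty$. You then retreat to a splitting, but the splitting you describe cannot work as stated: if the ``near'' part is a \emph{fixed} bounded neighbourhood of the origin, then on the ``far'' part the decay condition \eqref{log2} only gives $|1/p(x)-1/p_\infty|\lesssim1/\log(e+R)$, a constant rather than $1/\log|Q|$; and on the near part the local log-H\"older condition is of no help at all, since for $|x-y|\ge1$ it yields only a constant bound. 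The argument that actually closes forces the splitting radius to grow with the cube: take $N:=\{x\in Q:|x|\le|Q|^{\varepsilon}\}$ with $\varepsilon\le1/(np^+)$. On $Q\setminus N$ one has $\log(e+|x|)\ge\varepsilon\log|Q|$, so $|Q|^{1/p(x)-1/p_\infty}\lesssim1$, and combined with $|(1/p)_Q-1/p_\infty|\lesssim1/\log(e+|Q|)$ (your ``routine integration'', which is fine) the far part is bounded. On $N$ one uses no log-H\"older information at all, but rather the crude bound $|Q|^{1/p(x)-(1/p)_Q}\le|Q|^{1-(1/p)_Q}\le|Q|^{1-1/p^+}$ together with $|N|\lesssim|Q|^{\varepsilon n}$, so that its contribution to the average is $\lesssim|Q|^{\varepsilon n-1/p^+}\le1$. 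Thus the near-origin part is controlled by a measure-versus-growth tradeoff with a $p^+$-dependent choice of $\varepsilon$, not by the local log-H\"older estimate, and this is precisely the point your sketch leaves unproved (and misattributes). With that correction your proof works, but it amounts to reproving a DHHR-type estimate that the paper's short argument via \eqref{normasloglog1} and Lemma \ref{p en plog} avoids entirely.
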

\begin{proof}
	From the proof of Proposition \ref{NORMAS loglog}, by using inequality (\ref{normasloglog1}) with $\ex{p}:=\ex{p'}$ and $\ex{q}:=0$ we have
	\begin{align}\label{cor1}
	\normadefp{\ca}{\ex{p'}}
	\fint_Q(1/|Q|)^{1/p'(x)}\,dx\lesssim 1.
	\end{align}
	Since
	\begin{align*}
	\int_Q(1/|Q|)^{1/p'(x)}\,dx&=\int_Q|Q|^{1/p(x)-1}\,dx=\fint_Q|Q|^{1/p(x)}\,dx
	\end{align*}
	by (\ref{cor1}) we obtain that
	\begin{align*}
	\frac{\normadefp{\ca}{\ex{p'}}}{|Q|}
	\fint_Q|Q|^{1/p(x)}\,dx\lesssim 1.
	\end{align*}
	Thus, by Lemma \ref{p en plog},
	\begin{align*}
	\fint_Q|Q|^{1/p(x)}\,dx\lesssim \normadefp{\ca}{\ex{p}}.
	\end{align*}		
\end{proof}

\bigskip
We now show that the Examples \textit{\ref{ejemplo 1}} and \textit{\ref{ejemplo 2}} satisfy the hypotheses of Theorem \ref{teo general conmut lipchitz constante}.

Let us see \textit{\ref{ejemplo 1}}.
Recall that, for $\ex{p}\inplogdern$ with $1<p^-\le p^+<\infty$ and $\sigma>(p')^+/(p')^-$, 
$A_1(x,t)=t^{\sigma p'(x)}(\log(e+t))^{\sigma p'(x)}$,
$B_1(x,t)=t^{(\sigma p')'(x)}$ and
$D_1(t)=t \log(e+t)$.

If we define 
$\ex{s}:=\ex{(\sigma p')'}$ and 
$\ex{l}:=\ex{p}/\ex{s}$, by Lemma \ref{propiedades plog}(ii) and (iv), $\ex{s},\ex{l}\inplogdern$.
Moreover, $l^->1$.
In fact, since $\sigma>(p')^+/(p')^-$, 
$$(p^-)'=(p')^+<\sigma(p')^-=(\sigma p')^-$$
which implies that
$$p^->[(\sigma p')^-]'=[(\sigma p')']^+$$
and then 
$$1<\frac{p^-}{[(\sigma p')']^+}\le l^-.$$
Thus, we can apply	Theorem \ref{teo logl} and Theorem \ref{teo max llogl} to obtain that
$$M_{L^{\ex{(\sigma p')'}}}:\lpr{p}\rightarrow\lpr{p}.$$
and
$$M_{\ex{\beta},L^{\ex{(\sigma p')'}}}:\lpr{p}\rightarrow\lpr{p},$$
respectively.	
Condition \textit{\ref{D y D conjugada}} it follows from Lemma \ref{HH 4.4.5 }.
By Remark \ref{remark NORMAS loglog},
\begin{align*}
\normadeflpl{\ca}{A_1}\normadeflpl{\ca}{B_1}
&=\normadefp{\ca}{L^{\ex{\sigma p'}}(\log L)^{\ex{\sigma p'}}}
\normadefp{\ca}{L^{\ex{(\sigma p')'}}}\\
&\simeq
|Q|^{(1/\sigma p')_Q}\log(e+1/|Q|)
|Q|^{(1/(\sigma p')')_Q}\\
&=
|Q|\log(e+1/|Q|)\\
&\simeq
\normadefp{\ca}{L\log L}
=\normadefp{\ca}{D_1(L)},
\end{align*}
by equation \eqref{norma logl con uno},
and thus condition \textit{\ref{desigualdad normas}} is satisfied.
Condition \textit{\ref{a b y c para holder}} follows from the fact that, by equation \ref{inversa fi p q},
$$A_1^{-1}(x,t)B_1^{-1}(x,t)
\simeq\frac{t^{1/\sigma p'(x)}}{\log (e+t)}
t^{1/(\sigma p')'(x)}				
=\frac{t}{\log(e+t)}\simeq D_1^{-1}(t).$$

\medskip

Let us now see \textit{\ref{ejemplo 2}}.
Recall that for $\ex{p}\inplogdern$ with $1<p^-\le p^+<\infty$  and $\sigma>(p')^+/(p')^-$, $\ex{\mu}\inplogdern$ such that $1<\mu^-\le\mu^+<\infty$ and 
\begin{equation}\label{alfa inf mayor a uno ov}
1/\sigma\ex{p'}-1/\ex{\mu}>\epsilon,
\end{equation}
for some constant $\epsilon\in(0,1)$ and
$\ex{\nu}\inplogdern$,
$A_2(x,t)=t^{\mu(x)}(\log(e+t))^{\nu(x)\mu(x)}$, $B_2(x,t)=t^{(\sigma p')'(x)}$ and $D_2(x,t)=t^{\alpha(x)}(\log(e+t))^{\alpha(x)\nu(x)}$ 
where $\ex{\alpha}$ is defined by
$1/\ex{\alpha}=1/\ex{\mu}+1/\ex{(\sigma p')'}.$

Note that, by Lemma \ref{propiedades plog}(iii), $\ex{\alpha}\inplogdern$. 
Moreover, $1<\alpha^-\le\alpha^+<\infty$.
In fact, by inequality \eqref{alfa inf mayor a uno ov},
\begin{align*}
\frac{1}{\ex{\alpha}}=
\frac{1}{\ex{\mu}}+
\frac{1}{\ex{(\sigma p')'}}
<\frac{1}{\ex{\sigma p'}}
+\frac{1}{\ex{(\sigma p')'}}-\varepsilon
=1-\varepsilon.
\end{align*}
Thus, $\alpha^-\ge 1/(1-\varepsilon)>1$.
Also, 
\begin{align*}
{\ex{\alpha}}=
\frac{\ex{\mu}\ex{(\sigma p')'}}{\ex{\mu}+\ex{(\sigma p')'}}
\le 
{{\mu}^+}<\infty.
\end{align*}
Then, by Remark \ref{plog y ploglog},
$\ex{(\alpha\nu)},\ex{(\mu\nu)}\in\mathcal{P}^{\rm loglog}(\z)$.
Thus, by Proposition \ref{NORMAS loglog} and equation \eqref{NORMAS loglog q igual 0}, we have
\begin{align*}
\normadeflpl{\ca}{A_2}\normadeflpl{\ca}{B_2}
&=\normadefp{\ca}{L^{\mu(\cdot)}(\log L)^{\ex{(\mu\nu)}}}
\normadefp{\ca}{L^{\ex{(\sigma p')'}}}\\
&\simeq
|Q|^{(1/\mu)_Q}(\log(e+1/|Q|))^{\nu_Q}
|Q|^{(1/(\sigma p')')_Q}\\
&\simeq
|Q|^{(1/\alpha)_Q}(\log(e+1/|Q|))^{\nu_Q}\\
&\simeq
\normadefp{\ca}{L^{\ex{\alpha}}(\log L)^{\ex{(\alpha\nu)}}}
\simeq\normadeflpl{\ca}{D_2}.
\end{align*}
Then \textit{\ref{desigualdad normas}} holds.
On the other hand, by equation \eqref{inversa fi p q},
$$A_2^{-1}(x,t)B_2^{-1}(x,t)
\simeq\frac{t^{1/\mu(x)}}{(\log (e+t))^{\nu(x)}}t^{1/(\sigma p')'(x)}
\simeq\frac{t^{1/\alpha(x)}}{(\log(e+t))^{\nu(x)}}
\simeq D_2^{-1}(x,t),$$
thus \textit{\ref{a b y c para holder}} holds.
Note that, by Lemma \ref{teo max de HL en llogl} with $\ex{p}:=\ex{\alpha}$ and $\ex{q}:=\ex{(\alpha\nu)}$, $M:L^{\ex{\alpha}}(\log L)^{\ex{(\alpha\nu)}}(\z)\rightarrow L^{\ex{\alpha}}(\log L)^{\ex{(\alpha\nu)}}(\z)$.
Thus, by duality (see equation \eqref{dual}), we have that 
\begin{align*}
\normadeflpl{\ca}{D_2}\normadeflpl{\ca}{D_2^*}
&\lesssim
\normadeflpl{\ca}{D_2}
\sup_{\normadeflpl{g}{D_2}\leq1}\int_Q |g(x)|\,dx\\
&= \sup_{\normadeflpl{g}{D_2}\leq1}
\left\Vert\ca \int_Q |g(x)|\,dx\right\Vert_{D_2(\cdot,L)}\\
&= |Q|
\sup_{\normadeflpl{g}{D_2}\leq1}
\left\Vert\ca
\frac{1}{|Q|}
\int_Q |g(x)|\,dx\right\Vert_{D_2(\cdot,L)}\\
&\le |Q|\sup_{\normadeflpl{g}{D_2}\leq1} \normadeflpl{\ca Mg}{D_2}
\le |Q|.
\end{align*}
Then condition \textit{\ref{D y D conjugada}} holds.		
\subsection{Estimates in $\zL(\ex{\delta})$}
We now give some previous estimates for the symbol functions we are interested in.
\begin{lema}\label{Izuki con lipschitz general}
	Let $k$ be a positive integer and $\ex{p}\inplogdern$ with $1<p^-\le p^+<\infty$.
	Let $a\in T_\infty$ and $b\in \mathcal{L}^1_a$.
	Then, for every cube $Q\subset\z$,
	\begin{equation}
	\frac{\normadeflp{\car_Q(b-b_Q)^k}{p}}{\normadeflp{\car_Q}{p}}
	\lesssim \left(a(Q)\normadefp{b}{\mathcal{L}^1_a}\right)^k.
	\end{equation}
\end{lema}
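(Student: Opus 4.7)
The plan is to reduce the estimate to a direct application of Diening's Lemma (Lemma \ref{lema diening con p variable}) combined with the equivalence of the generalized Lipschitz spaces $\mathcal{L}^\varrho_a$ for different $\varrho$ given by Theorem \ref{equivalencia lipschitz con p}.

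First I would dispense with the trivial case: if $(b-b_Q)$ vanishes a.e.\ on $Q$, the left-hand side is zero and there is nothing to prove, so we may assume $\int_Q |b-b_Q|\,dx > 0$. Let $\nu \in (0,1)$ be the constant given by Lemma \ref{lema diening con p variable} for the exponent $\ex{p}$, and set
$$f := |b-b_Q|^{k/\nu}.$$
Then $|f|^\nu = |b-b_Q|^k$ and, since $k/\nu \geq 1$, the average $f_Q = \fint_Q |b-b_Q|^{k/\nu}\,dx$ is strictly positive. Applying Lemma \ref{lema diening con p variable} to $f$, I obtain
$$\normadeflp{\car_Q (b-b_Q)^k}{p} = \normadeflp{\car_Q |f|^{\nu}}{p} \lesssim \normadeflp{\car_Q}{p}\, |f_Q|^{\nu}.$$

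Next I would rewrite $|f_Q|^\nu$ in terms of an $L^{k/\nu}$-type average of $b-b_Q$:
$$|f_Q|^{\nu} = \left( \fint_Q |b-b_Q|^{k/\nu}\,dx \right)^{\nu} = \left[\left( \fint_Q |b-b_Q|^{k/\nu}\,dx \right)^{\nu/k}\right]^{k}.$$
Since $k/\nu \geq 1$, Theorem \ref{equivalencia lipschitz con p} with $\varrho = k/\nu$ gives
$$\left( \fint_Q |b-b_Q|^{k/\nu}\,dx \right)^{\nu/k} \lesssim a(Q)\,\normadefp{b}{\mathcal{L}^1_a},$$
so raising to the $k$-th power yields $|f_Q|^\nu \lesssim (a(Q)\,\normadefp{b}{\mathcal{L}^1_a})^k$. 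Combining this with the Diening estimate and dividing by $\normadeflp{\car_Q}{p}$ finishes the proof.

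The only subtle point is ensuring the applicability of Lemma \ref{lema diening con p variable}; the trick of choosing $f = |b-b_Q|^{k/\nu}$ (rather than $|b-b_Q|^k$) is precisely what makes $|f|^\nu$ match the desired expression while keeping $f \in L^1_{\rm loc}$. Everything else is a standard chain of inequalities, so I expect no further obstacles.
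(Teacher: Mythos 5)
Your argument is correct and is essentially the paper's own proof: the same choice $f=|b-b_Q|^{k/\nu}$ with $\nu$ from Lemma \ref{lema diening con p variable}, followed by Theorem \ref{equivalencia lipschitz con p} with $\varrho=k/\nu$ to bound $\left(\fint_Q|b-b_Q|^{k/\nu}\,dx\right)^{\nu/k}$ by $a(Q)\normadefp{b}{\mathcal{L}^1_a}$. Your explicit handling of the degenerate case $f_Q=0$ is a minor (and harmless) addition to the paper's argument.
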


\begin{proof}
	Let $Q$ be a fixed cube.
	By Lemma \ref{lema diening con p variable} there exist a constant $0<\nu<1$ independent of $Q$ such that for all $f\in\lplocc{1}$,
	\begin{equation}\label{Izuki con lipschitz2}
	\normadeflp{\car_{Q}|f|^\nu}{p}
	\lesssim \left(|f|_{Q}\right)^\nu\normadeflp{\car_{Q}}{p}.
	\end{equation}
	
	We now put $f(x)=(b(x)-b_Q)^{k/\nu}$.
	Noticing that $k/\nu>1$, by Theorem \ref{equivalencia lipschitz con p}, we have
	\begin{align*}
	\left(|f|_{Q}\right)^\nu
	&=\left(\frac{1}{|Q|}\int_Q|b(x)-b_Q|^{k/\nu}\,dx\right)^\nu\\
	&=\left[\frac{a(Q)}{a(Q)}
	\left(\frac{1}{|Q|}\int_Q|b(x)-b_Q|^{k/\nu}\,dx\right)^{\nu/k}\right]^k\\
	&\simeq\left[a(Q)
	\left(\frac{1}{a(Q)|Q|}\int_Q|b(x)-b_Q|\,dx\right)\right]^k               \lesssim\left[a(Q)\normadefp{b}{\mathcal{L}^1_a}\right]^k.
	\end{align*}
\end{proof}

\begin{lema}\label{Lemma 3.7 PRa con a general}
	Let $a\in T_\infty$ and $b\in \mathcal{L}^1_a$, then the following inequality
	$$\,|b_{3Q}-b_Q|\lesssim\|a\|_{\mathbf{t}_\infty}\,a(3Q)\normadefp{b}{\mathcal{L}^1_a}.$$
	holds for every cube $Q\subset\z$.
\end{lema}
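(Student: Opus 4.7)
The plan is to reduce $|b_{3Q}-b_Q|$ to an average of $|b-b_{3Q}|$ over $3Q$, to which the definition of $\mathcal{L}^1_a$ applies directly. Concretely, since $b_Q = \fint_Q b$, I would write
\begin{equation*}
b_{3Q}-b_Q = \fint_Q \bigl(b_{3Q}-b(x)\bigr)\,dx,
\end{equation*}
take absolute values, and bound the integrand pointwise: $|b_{3Q}-b_Q| \le \fint_Q |b(x)-b_{3Q}|\,dx$.

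Next I would enlarge the domain of integration from $Q$ to $3Q$. Since $|3Q|=3^n|Q|$, the estimate
\begin{equation*}
\fint_Q |b(x)-b_{3Q}|\,dx \le \frac{1}{|Q|}\int_{3Q}|b(x)-b_{3Q}|\,dx = 3^n\fint_{3Q}|b(x)-b_{3Q}|\,dx
\end{equation*}
is immediate. The right-hand side is precisely the quantity controlled by the definition (\ref{lipschitz a}) of $\mathcal{L}^1_a$ (equivalently by Theorem \ref{equivalencia lipschitz con p} with $\varrho=1$), which yields $\fint_{3Q}|b-b_{3Q}|\,dx \le a(3Q)\,\|b\|_{\mathcal{L}^1_a}$. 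Combining everything gives $|b_{3Q}-b_Q|\le 3^n\, a(3Q)\,\|b\|_{\mathcal{L}^1_a}$, and since $\|a\|_{t_\infty}\ge 1$ the stated inequality follows.

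There is no real obstacle here: the argument is a standard one-line doubling-of-the-cube computation, and the $T_\infty$ constant appears in the estimate only as an unused safety factor (it would genuinely be needed if one wanted $a(Q)$ instead of $a(3Q)$ on the right, via $a(Q)\le \|a\|_{t_\infty}\,a(3Q)$, but for the form stated it is not necessary). The lemma is used later to telescope averages $b_{3^kQ}-b_{3^{k-1}Q}$, so this clean expression in terms of $a(3Q)$ is the form that will be convenient.
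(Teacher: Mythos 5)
Your argument is correct, and it is a slightly more direct route than the one in the paper. The paper first splits $|b_{3Q}-b_Q|\le |b_{3Q}-b_{2Q}|+|b_{2Q}-b_Q|$, bounds each difference by an oscillation average (over $3Q$ and $2Q$, respectively) to get $a(3Q)\|b\|_{\mathcal{L}^1_a}+a(2Q)\|b\|_{\mathcal{L}^1_a}$, and only then invokes the $T_\infty$ condition, via $2Q\subset 3Q$, to absorb $a(2Q)$ into $\|a\|_{t_\infty}\,a(3Q)$ --- so in the paper's proof the factor $\|a\|_{t_\infty}$ enters genuinely, not as a safety factor. Your version skips the intermediate cube: writing $|b_{3Q}-b_Q|\le \frac{1}{|Q|}\int_Q|b-b_{3Q}|\le 3^n\frac{1}{|3Q|}\int_{3Q}|b-b_{3Q}|\le 3^n a(3Q)\|b\|_{\mathcal{L}^1_a}$ uses only $Q\subset 3Q$ and the definition of $\mathcal{L}^1_a$, gives the explicit constant $3^n$, and shows the estimate holds for an arbitrary functional $a$, the $T_\infty$ hypothesis being needed only to write $\|a\|_{t_\infty}\ge 1$ in the stated form. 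One small side remark: in this paper the lemma is not used to telescope $b_{3^kQ}-b_{3^{k-1}Q}$, but simply to control the constant $|b_{3Q}-b_Q|^j$ appearing in the estimate of $\|\car_{3Q}|b-b_Q|^j\|_{\lp{\omega'}}$ in the proof of Theorem \ref{teo conm variable con lipschitz generalizado}; this does not affect your proof.
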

\begin{proof}
	Let $Q$ be a fixed cube.
	Then, by $T_\infty$ condition \eqref{cond t infinito}, we have that
	\begin{align*}
	|b_{3Q}-b_Q|
	&\leq
	|b_{3Q}-b_{2Q}|+|b_{2Q}-b_Q|\\
	&\leq \frac{1}{|2Q|}\int_{2Q}|b(x)-b_{3Q}|\,dx+
	\frac{1}{|Q|}\int_{Q}|b(x)-b_{2Q}|\,dx\\
	&\lesssim \frac{1}{|3Q|}\int_{3Q}|b(x)-b_{3Q}|\,dx+
	\frac{1}{|2Q|}\int_{2Q}|b(x)-b_{2Q}|\,dx\\
	&\lesssim a(3Q)\normadefp{b}{\mathcal{L}^1_a}+
	a(2Q)\normadefp{b}{\mathcal{L}^1_a}\\
	&\lesssim\|a\|_{t_\infty}\,a(3Q)\normadefp{b}{\mathcal{L}^1_a}.
	\end{align*}
\end{proof}

In the proof of Theorem \ref{teo general conmut lipchitz constante} we shall use the following pointwise estimate for $b\in\mathbb{L}(\ex{\delta})$.

\begin{lema}\label{obs teorema delta variable}
	Let $\ex{r}\inplogdern$ with $r_\infty\leq\ex{r}\le r^+<\infty$ and $\ex{\delta}$ be defined as in (\ref{delta}) and $b\in\mathbb{L}(\ex{\delta})$. Let $Q$ be a cube in $\z$ and $z\in kQ$ for some positive integer $k$.
	Then
	\begin{align*}
	\left|b(z)-b_Q\right|
	&\lesssim\normadefp{\ca}{n/\ex{\delta}}.
	\end{align*}		
\end{lema}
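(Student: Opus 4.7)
The plan is to reduce the estimate to the combination of two already-established tools: the pointwise H\"older-type bound for $b\in\zL(\ex{\delta})$ from Lemma \ref{condicion puntual lip variable }, and the comparison between the integral of $|Q|^{1/p(x)}$ and the variable Lebesgue norm of $\ca$ supplied by Corollary \ref{corollary 3.5}. The right-hand side $\normadefp{\ca}{n/\ex{\delta}}$ will absorb, via Corollary \ref{corollary 3.5}, the average $\fint_Q |Q|^{\delta(x)/n}\,dx$, which is precisely what one obtains after integrating the pointwise inequality over $Q$.

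First I would write
$$|b(z)-b_Q|\le \fint_Q |b(z)-b(x)|\,dx$$
and apply Lemma \ref{condicion puntual lip variable } to dominate the integrand by $|z-x|^{\delta(x)}$. Since $x\in Q$ and $z\in kQ$ yield $|z-x|\le C_n\, k\,\ell(Q)$, and $\ex{\delta}$ is bounded above by $1$, we obtain
$$|z-x|^{\delta(x)}\lesssim k^{\delta^+}\ell(Q)^{\delta(x)}=k^{\delta^+}|Q|^{\delta(x)/n}.$$
Setting $\ex{p}:=n/\ex{\delta}$, Lemma \ref{propiedades plog} together with $\ex{r}\inplogdern$ gives $1/\ex{p}=1/\gamma-1/\ex{r}\in\mathcal{P}^{\log}(\z)$, so $\ex{p}\in\mathcal{P}^{\log}(\z)$; together with (\ref{delta}) and the condition $r_\infty\le\ex{r}$ this also supplies $p^+<\infty$. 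Applying Corollary \ref{corollary 3.5} to this $\ex{p}$ then yields
$$\fint_Q |Q|^{\delta(x)/n}\,dx=\fint_Q |Q|^{1/p(x)}\,dx\lesssim \normadefp{\ca}{n/\ex{\delta}},$$
and chaining the displayed inequalities gives the conclusion, with the factor $k^{\delta^+}$ absorbed into the implicit constant.

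I do not anticipate a real obstacle here. The only delicate points worth double-checking are that $\ex{p}=n/\ex{\delta}$ is a bona fide exponent in $\mathcal{P}^{\log}(\z)$ with $p^+<\infty$ (so that Corollary \ref{corollary 3.5} is applicable), and that $k^{\delta^+}$ can be treated as a constant since $k$ is a fixed positive integer in the hypothesis. Both conditions follow directly from the standing assumptions on $\ex{r}$ and $\ex{\delta}$.
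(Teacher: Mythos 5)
Your argument is exactly the paper's proof: average the pointwise bound $|b(z)-b(x)|\lesssim|z-x|^{\delta(x)}$ from Lemma \ref{condicion puntual lip variable } over $Q$, use $|z-x|\lesssim|Q|^{1/n}$ for $z\in kQ$, and conclude with Corollary \ref{corollary 3.5} applied to $\ex{p}=n/\ex{\delta}$. The extra checks you note (that $n/\ex{\delta}$ is an admissible exponent and that $k^{\delta^+}$ is harmless) are fine and only make explicit what the paper leaves implicit.
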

\begin{proof}
	Note that if $x\in Q$ and $z\in kQ$ for some positive integer $k$, then $|z-x|\lesssim|Q|^{1/n}$.
	Thus by Lemma \ref{condicion puntual lip variable } and Corollary \ref{corollary 3.5} we have
	\begin{align*}
	\left|b(z)-b_Q\right|
	&\leq\fint_Q\left|b(z)-b(x)\right|\,dx\nonumber\lesssim\fint_Q\left|z-x\right|^{\delta(x)}\,dx\nonumber\\
	&\lesssim\fint_Q|Q|^{\delta(x)/n}\,dx
	\lesssim\normadefp{\ca}{n/\delta(\cdot)}.
	\end{align*}
\end{proof}
\section{Proof of main results}\label{S4}
In this section we present the proofs of Theorem \ref{teo conm variable con lipschitz generalizado} and Theorem \ref{teo general conmut lipchitz constante}.
\begin{proof}[Proof of Theorem~\ref{teo conm variable con lipschitz generalizado}]
	Since $v\in \lploc{p}$ implies that the set of bounded functions with compact support is dense in $\lpw{p}{v}(\z)$, it is enough to show that 
	$$\normadeflpw{\tfi^{b,m} f}{q}{w}\lesssim \normadeflpw{f}{p}{v}$$
	for each non-negative bounded function with compact support $f$.
	Moreover, by duality (see equation \eqref{dual}) this is equivalent to prove that
	$$\int_{\z} |\tfi^{b,m} f(x)| w(x)g(x)\,dx \lesssim \normadeflpw{f}{p}{v}$$
	for all non-negative bounded functions with compact support $f,g$ such that $\normadeflp{g}{q'}\leq1$.
	
	Let $\overline{K}$ be the function defined by
	$$\overline{K}(t)=\sup_{t<|x|\leq2t}K(x),$$
	for every $t>0$. It was proved in [\cite{Li}, Proof of Theorem 2.2] that, if $K\in \mathfrak{D}$, we can estimate the commutator as follows
	$$|\tfi^{b,m} \fx|\leq\sum_Q \overline{K}\left(\frac{\ell(Q)}{2}\right)\sum_{j=0}^m{m \choose j} |b(x)-b_Q|^{m-j}\car_Q(x)\int_{3Q}|b(z)-b_Q|^{j}f(z)\,dz,$$
	where  the sum is taken over all dyadic cubes of $\z$.
	Hence
	\begin{align}\label{111}
	& \int_{\zR^n} |\tfi^{b,m}  f(x) |w(x)g(x)\,dx \nonumber\\
	&\quad\lesssim\sum_Q \overline{K}\left(\frac{\ell(Q)}{2}\right)
	\sum_{j=0}^m
	\int_{3Q}|b(z)-b_Q|^{j}f(z)\,dz\,\int_Q |b(x)-b_Q|^{m-j}g(x)w(x)\,dx.
	\end{align}
	Let us denote $s(\cdot):= Rp'(\cdot)$ and $\ex{l}:= S\ex{q}$.
	Since $(p')^+<R(p')^-$ and $q^+<Sq^-$ then $(s')^+<p^-$ and $(l')^+<(q^+)'$.
	Let $\mu,\nu$ two constants such that
	$$(s')^+< \mu <p^-
	\esp\esp\texto{ and }\esp\esp
	(l')^+<\nu<(q^+)',$$
	and $\omega(\cdot),\tau(\cdot)$ defined by
	$$\frac{1}{\omega(\cdot)}=\frac{1}{s(\cdot)}+\frac{1}{\mu}
	\esp\esp\texto{ and }\esp\esp
	\frac{1}{\tau(\cdot)}=\frac{1}{l(\cdot)}+\frac{1}{\nu}.$$
	Observe that, by Lemma \ref{propiedades plog},  $\ex{\omega},\ex{\tau}\inplogdern$ since $\ex{s},\ex{l}\inplogdern$.
	Using H\"{o}lder's inequality \eqref{holderpp} twice and Lemma \ref{p en plog}, we can estimate (\ref{111}) by a multiple of
	\begin{align}\label{112}
	& \sum_Q \overline{K}\left(\frac{\ell(Q)}{2}\right)
	\sum_{j=0}^m |3Q|
	\frac{\normadeflp{\car_{3Q} |b-b_Q|^{j}}{\omega'}}
	{\normadeflp{\car_{3Q}}{\omega'}}
	\frac{\normadeflp{\car_{3Q} f}{\omega}}
	{\normadeflp{\car_{3Q}}{\omega}}\nonumber\\
	&\quad\quad\times
	|Q|
	\frac{\normadeflp{\car_Q |b-b_Q|^{m-j}}{\tau'}}
	{\normadeflp{\car_Q}{\tau'}}
	\frac{\normadeflp{\car_Q gw}{\tau}}
	{\normadeflp{\car_Q}{\tau}}.
	\end{align}
	Notice that, by Lemmas \ref{Izuki con lipschitz general} and \ref{Lemma 3.7 PRa con a general}, we have
	\begin{align*}
	\frac{\normadeflp{\car_{3Q} |b-b_Q|^{j}}{\omega'}}
	{\normadeflp{\car_{3Q}}{\omega'}}
	&\lesssim \frac{\normadeflp{\car_{3Q} |b-b_{3Q}|^j}{\omega'}}
	{\normadeflp{\car_{3Q}}{\omega'}}
	+\frac{\normadeflp{\car_{3Q} |b_{3Q}-b_Q|^j}{\omega'}}
	{\normadeflp{\car_{3Q}}{\omega'}}\\
	&\lesssim \left(\|a\|_{\textit{t}_\infty}a(3Q)
	\normadefp{b}{\mathcal{L}^1_a}\right)^j.
	\end{align*}
	Thus, since $a\in T_\infty$, we can estimate (\ref{112}) as follows
	\begin{align}\label{113}
	& \int_{\z} |\tfi^{b,m}  f(x) |w(x)g(x)\,dx \nonumber\\
	&\quad \lesssim\sum_Q \overline{K}\left(\frac{\ell(Q)}{2}\right)
	\sum_{j=0}^m |3Q|
	\left(\|a\|_{\textit{t}_\infty}a(3Q)\normadefp{b}{\mathcal{L}^1_a}\right)^j
	\frac{\normadeflp{\car_{3Q} f}{\omega}}
	{\normadeflp{\car_{3Q}}{\omega}}\nonumber\\
	&\quad\quad\quad
	\times |Q|\left(\|a\|_{\textit{t}_\infty}a(Q)\normadefp{b}{\mathcal{L}^1_a}\right)^{m-j}
	\frac{\normadeflp{\car_Q gw}{\tau}}
	{\normadeflp{\car_Q}{\tau}}\nonumber\\
	&\quad
	\lesssim \normadefp{b}{\mathcal{L}^1_a}^m
	\sum_Q a(3Q)^m\,\, \overline{K}\left(\frac{\ell(Q)}{2}\right)|3Q|
	\frac{\normadeflp{\car_{3Q} f}{\omega}}
	{\normadeflp{\car_{3Q}}{\omega}}
	|Q|
	\frac{\normadeflp{\car_Q gw}{\tau}}
	{\normadeflp{\car_Q}{\tau}}.
	\end{align}
	Since $g$ has compact support and $w\in\lploc{Sq}$,
	$$\lim_{\ell(Q)\rightarrow \infty} \frac{\normadeflp{\car_Q gw}{\tau}}{\normadeflp{\car_Q}{\tau}}= 0.$$
	Let $C_\tau$, $C_\tau^*$, $C_\tau^{**}$ and $G_\tau$ be the constants provided by Lemma \ref{(2.11)}, Lemma \ref{p en plog} and Theorem \ref{7.3.22} respectively.
	If $\alpha> C_\tau C_\tau^* C_\tau^{**}G_\tau$ and $k\in\zZ$, it follows that, if for some dyadic cube $Q$,
	\begin{equation}\label{114}
	\alpha^k<\frac{\normadeflp{\car_Q gw}{\tau}}{\normadeflp{\car_Q}{\tau}},
	\end{equation}
	then $Q$ is contained in dyadic cubes satisfying this condition, which are maximal with respect to the inclusion.
	Thus, for each integer $k$ there is a family of maximal non-overlapping dyadic cubes $\{Q_{k,j}\}_{j\in\zZ}$ satisfying \eqref{114}.
	Let $Q_{k,j}'$ be the dyadic cube containing $Q_{k,j}$ with sidelength $2\ell(Q_{k,j})$. Then, by maximality and Lemma \ref{(2.11)}, we have
	\begin{align*}
	\alpha^k
	&<\frac{\normadeflp{\car_{Q_{k,j}} gw}{\tau}}
	{\normadeflp{\car_{Q_{k,j}}}{\tau}}
	\leq\frac{\normadeflp{\car_{Q_{k,j}'}}{\tau}}
	{\normadeflp{\car_{Q_{k,j}}}{\tau}}
	\frac{\normadeflp{\car_{Q_{k,j}'} gw}{\tau}}
	{\normadeflp{\car_{Q_{k,j}'}}{\tau}}\leq C_\tau\, \alpha^k
	\leq \alpha^{k+1}.
	\end{align*}
	For $k\in\zZ$ we define the set
	$$\mathcal{C}_k:=\left\{Q \texto{  dyadic } \,:\, \alpha^k<\frac{\normadeflp{\car_{Q} gw}{\tau}}{\normadeflp{\car_{Q}}{\tau}}\leq \alpha^{k+1} \right\}.$$
	Then every dyadic cube $Q$ for which $\normadeflp{\car_{Q} gw}{\tau}/\normadeflp{\car_{Q}}{\tau}\neq0$ belongs to exactly one $\mathcal{C}_k$. Furthermore, if $Q\in\mathcal{C}_k$, it follows that $Q\subset Q_{k,j}$ for some $j$.
	Then, from (\ref{113}) and $T_\infty$ condition \eqref{cond t infinito}, we obtain that
	\begin{align}\label{115}
	& \int_{\z} |\tfi^{b,m}  f(x) |w(x)g(x)\,dx \nonumber \\
	&\quad\lesssim\normadefp{b}{\mathcal{L}^1_a}^m
	\sum_{k\in\zZ} \sum_{Q\in \mathcal{C}_k}
	a(3Q)^m \,\overline{K}\left(\frac{\ell(Q)}{2}\right)|3Q|
	\frac{\normadeflp{\car_{3Q} f}{\omega}}
	{\normadeflp{\car_{3Q}}{\omega}}
	|Q|
	\frac{\normadeflp{\car_Q gw}{\tau}}
	{\normadeflp{\car_Q}{\tau}}
	\nonumber\\
	&\quad \lesssim\normadefp{b}{\mathcal{L}^1_a}^m
	\sum_{(k,j)\in\zZ\times\zZ}\alpha^{k+1}
	\sum_{Q\in \mathcal{C}_k\,:\, Q\subset Q_{k,j}} a(3Q)^m \overline{K}\left(\frac{\ell(Q)}{2}\right)
	|3Q||Q|
	\frac{\normadeflp{\car_{3Q} f}{\omega}}
	{\normadeflp{\car_{3Q}}{\omega}}\nonumber\\
	&\quad \lesssim
	\normadefp{b}{\mathcal{L}^1_a}^m\alpha
	\sum_{(k,j)\in\zZ\times\zZ}
	\frac{\normadeflp{\car_{Q_{k,j}} gw}{\tau}}
	{\normadeflp{\car_{Q_{k,j}}}{\tau}}
	a(3Q_{k,j})^m \nonumber\\
	&\quad\quad\quad\times
	\sum_{Q\in \mathcal{C}_k\,:\, Q\subset Q_{k,j}} \overline{K}\left(\frac{\ell(Q)}{2}\right)
	|3Q||Q|
	\frac{\normadeflp{\car_{3Q} f}{\omega}}
	{\normadeflp{\car_{3Q}}{\omega}}.
	\end{align}
	If we show that there is a constant $C_K$ such that, for any dyadic cube $Q_0$,
	\begin{align}\label{116}
	&\sum_{Q\,:\, Q\subset Q_0}
	\overline{K}\left(\frac{\ell(Q)}{2}\right)
	|3Q||Q|
	\frac{\normadeflp{\car_{3Q} f}{\omega}}{\normadeflp{\car_{3Q}}{\omega}}\nonumber\\
	&\quad\quad\leq C_K
	\widetilde{K}(\delta(1+\varepsilon)\ell(Q_0))
	|3Q_0|
	\frac{\normadeflp{\car_{3Q_0} f}{\omega}}{\normadeflp{\car_{3Q_0}}{\omega}},
	\end{align}
	with $\varepsilon,\delta$ the numbers provided by condition $\mathfrak{D}$ and $\widetilde{K}(t)=\int_{|z|\leq t} K(z)\,dz$, from (\ref{115}) we obtain that
	\begin{align}\label{estrella}
	&\int_{\z} |\tfi^{b,m}  f(x) |w(x)g(x)\,dx\nonumber\\
	&\quad \lesssim\normadefp{b}{\mathcal{L}^1_a}^m
	\sum_{(k,j)\in\zZ\times\zZ}
	a(3Q_{k,j})^m
	C_K
	\widetilde{K}(\delta(1+\varepsilon)\ell(Q_{k,j}))
	|3Q_{k,j}|\nonumber\\
	&\quad \quad \quad\times \frac{\normadeflp{\car_{3Q_{k,j}} f}{\omega}}
	{\normadeflp{\car_{3Q_{k,j}}}{\omega}}
	\frac{\normadeflp{\car_{Q_{k,j}} gw}{\tau}}
	{\normadeflp{\car_{Q_{k,j}}}{\tau}}.
	\end{align}
	Let $\gamma=\max \{3,\delta(1+\varepsilon)\}$.
	Note that $\widetilde{K}$ is an increasing function.
	From \eqref{estrella}, by Lemma \ref{(2.11)}, $T_\infty$ condition (\ref{cond t infinito}), H\"{o}lder's inequality and Lemma \ref{equivalenciabeta} we have that
	\begin{align*}
	& \int_{\zR^n} |\tfi^{b,m}  f(x) |w(x)g(x)\,dx \nonumber \\
	&\quad\lesssim\normadefp{b}{\mathcal{L}^1_a}^m
	\sum_{(k,j)\in\zZ\times\zZ}
	a(\gamma Q_{k,j})^m\,
	\widetilde{K}(\gamma \ell(Q_{k,j}))
	|\gamma Q_{k,j}|
	\frac{\normadeflp{\car_{\gamma Q_{k,j}} f}{\omega}}
	{\normadeflp{\car_{\gamma Q_{k,j}}}{\omega}}
	\frac{\normadeflp{\car_{\gamma Q_{k,j}} gw}{\tau}}
	{\normadeflp{\car_{\gamma Q_{k,j}}}{\tau}}\nonumber\\
	&\quad \lesssim\normadefp{b}{\mathcal{L}^1_a}^m
	\sum_{(k,j)\in\zZ\times\zZ}
	a(\gamma Q_{k,j})^m
	\widetilde{K}(\gamma \ell(Q_{k,j}))
	|\gamma Q_{k,j}|
	\frac{\normadefp{\car_{\gamma Q_{k,j}} fv}{\mu}}
	{\normadefp{\car_{\gamma Q_{k,j}}}{\mu}}
	\frac{\normadeflp{\car_{\gamma Q_{k,j}} v^{-1}}{s}}
	{\normadeflp{\car_{\gamma Q_{k,j}}}{s}}\\
	&\quad\quad\times
	\frac{\normadefp{\car_{\gamma Q_{k,j}} g}{\nu}}
	{\normadefp{\car_{\gamma Q_{k,j}}}{\nu}}
	\frac{\normadeflp{\car_{\gamma Q_{k,j}} w}{l}}
	{\normadeflp{\car_{\gamma Q_{k,j}}}{l}}.
	\end{align*}
	Thus, by Fefferman-Phong type condition \eqref{1.z} on the weights we obtain
	\begin{align*}
	& \int_{\zR^n} |\tfi^{b,m}  f(x) |w(x)g(x)\,dx \nonumber \\
	&\quad \le\kappa\normadefp{b}{\mathcal{L}^1_a}^m
	\sum_{(k,j)\in\zZ\times\zZ}
	|Q_{k,j}|
	\frac{\normadefp{\car_{\gamma Q_{k,j}} fv}{\mu}}
	{\normadefp{\car_{\gamma Q_{k,j}}}{\mu}}
	\frac{\normadefp{\car_{\gamma Q_{k,j}} g}{\nu}}
	{\normadefp{\car_{\gamma Q_{k,j}}}{\nu}}
	\frac{\normadeflp{\car_{\gamma Q_{k,j}}}{p}}
	{\normadeflp{\car_{\gamma Q_{k,j}}}{q}}.
	\end{align*}
	Let $\beta(\cdot)$ defined as in Lemma \ref{equivalenciabeta}.
	Then, by this lemma, the last sum is equivalent to
	\begin{align}\label{last sum}
	\kappa\normadefp{b}{\mathcal{L}^1_a}^m
	\sum_{(k,j)\in\zZ\times\zZ}
	|Q_{k,j}|
	\frac{\normadefp{\car_{\gamma Q_{k,j}} fv}{\mu}}
	{\normadefp{\car_{\gamma Q_{k,j}}}{\mu}}
	\normadeflp{\car_{\gamma Q_{k,j}}}{\beta}
	\frac{\normadefp{\car_{\gamma Q_{k,j}} g}{\nu}}
	{\normadefp{\car_{\gamma Q_{k,j}}}{\nu}}.
	\end{align}        
	For each $k,j\in\zZ$ we can consider the sets
	$D_k=\bigcup_{j\in\zZ}
	Q_{k,j}$ and $F_{k,j}=Q_{k,j}\backslash(Q_{k,j}\cap D_{k+1})$.
	Thus $\{F_{k,j}\}_{(k,j)\in\zZ\times\zZ}$ is a disjoint family of sets which satisfy
	\begin{equation}\label{A pelito}
	|Q_{k,j}\cap D_{k+1}|<\frac{\Pi}{\alpha}|Q_{k,j}|
	\end{equation}
	for some positive constant $\Pi<\alpha$, and
	\begin{equation}\label{B pelito}
	|Q_{k,j}|<\frac{1}{1-{\Pi}/{\alpha}}|F_{k,j}|.
	\end{equation}
	Deferring the proof of these inequalities for the moment, we can estimate \eqref{last sum} to obtain
	\begin{align*}
	&\int_{\z} |\tfi^{b,m}  f(x) |w(x)g(x)\,dx\\
	&\quad\lesssim \kappa
	\normadefp{b}{\mathcal{L}^1_a}^m
	\sum_{(k,j)\in\zZ\times\zZ}
	|F_{k,j}|
	\frac{\normadefp{\car_{\gamma Q_{k,j}} fv}{\mu}}
	{\normadefp{\car_{\gamma Q_{k,j}}}{\mu}}
	\normadeflp{\car_{\gamma Q_{k,j}}}{\beta}
	\frac{\normadefp{\car_{\gamma Q_{k,j}} g}{\nu}}
	{\normadefp{\car_{\gamma Q_{k,j}}}{\nu}}\\
	&\quad\lesssim \kappa\normadefp{b}{\mathcal{L}^1_a}^m
	\int_{\z}
	M_{L^\mu}(fv)(y)dy
	\mbetasf{\beta}{\nu}{(g)}(y)\\
	&\quad\lesssim \kappa\normadefp{b}{\mathcal{L}^1_a}^m
	\normadeflp{M_{L^\mu}(fv)}{p}
	\normadeflp{\mbetasf{\beta}{L^\nu}{(g)}}{p'}\\
	&\quad\lesssim \kappa\normadefp{b}{\mathcal{L}^1_a}^m
	\normadeflp{fv}{p},
	\end{align*}
	where we have used that by Theorem \ref{teo logl}, $M_{L^\mu}:\lpr{p}\hookrightarrow\lpr{p}$ since $p^->\mu$, and by Remark \ref{teo max llogl remark}, $\mbetas{\beta}{L^\nu}:\lpr{q'}\hookrightarrow\lpr{p'}$ since $(q')^->\nu$ (see \eqref{Mvar} and \eqref{Mvarfrac} for the definition of this maximal operatos).
	
	To prove \eqref{A pelito}, note that if for some $k,j,i\in\zZ$, $Q_{k,j}\cap Q_{k+1,i}\neq\emptyset$ then, by maximality and the fact that $\alpha> 1$, $Q_{k+1,i}\subsetneq Q_{k,j}$.
	Thus 
	\begin{align*}
	|Q_{k,j}\cap D_{k+1}|
	&=\left|Q_{k,j}\cap \bigcup_{i\in\zZ} Q_{k+1,i}\right|
	=\left|\bigcup_{i\in\zZ} (Q_{k,j}\cap  Q_{k+1,i})\right|
	=\sum_{i:Q_{k+1,i}\subseteq Q_{k,j}}|Q_{k+1,i}|\\
	&\le C_\tau^* \sum_{i:Q_{k+1,i}\subseteq
		Q_{k,j}}\normadeflp{\car_{Q_{k+1,i}}}{\tau}	\normadeflp{\car_{Q_{k+1,i}}}{\tau'}
	\end{align*}
	where the constant $C_{\tau}^*$ is provided by Lemma \ref{p en plog}.
	On the other hand, by maximality and the property \eqref{114} of the cubes $Q_{k+1,i}$ and $Q_{k,j}$ we have
	\begin{equation}\label{423}
	{\rm (i)}\,\, \alpha^{k+1}<\frac{\normadeflp{\car_{Q_{k+1,i}}gw}{\tau}}{\normadeflp{\car_{Q_{k+1,i}}}{\tau}}
	\,\,\,\,\text{and}\,\,\,\,{\rm (ii)}\,\,
	\frac{\normadeflp{\car_{Q_{k,j}}gw}{\tau}}{\normadeflp{\car_{Q_{k,j}}}{\tau}}\leq C_\tau \alpha^k
	\end{equation}	
	Then, by \eqref{423}{(i)} we have 
	\begin{align}\label{244}
	|Q_{k,j}\cap D_{k+1}|
	&\le C_\tau^* \sum_{i:Q_{k+1,i}\subseteq Q_{k,j}}\normadeflp{\car_{Q_{k+1,i}}}{\tau}	\normadeflp{\car_{Q_{k+1,i}}}{\tau'}\nonumber\\
	&< C_{\tau}^*\alpha^{-(k+1)} \sum_{i:Q_{k+1,i}\subseteq Q_{k,j}}	\normadeflp{\car_{Q_{k+1,i}}gw\car_{Q_{k,j}}}{\tau}	\normadeflp{\car_{Q_{k+1,i}}\car_{Q_{k,j}}}{\tau'}.
	\end{align}
	Note that, by Theorem \ref{7.3.22}, the following inequality holds 
	$$\sum_{i\in\zZ}\normadeflp{\car_{Q_{k+1,i}} r}{\tau}\normadeflp{\car_{Q_{k+1,i}} h}{\tau'}\le G_\tau \normadeflp{r}{\tau}\normadeflp{h}{\tau'}$$
	for every $r\in\lpr{\tau}$ and  $h\in\lpr{\tau'}$. 
	Appliying this with $r:=gw\car_{Q_{k,j}}$ and $h:=\car_{Q_{k,j}}$ we can estimate \eqref{244} as follows
	\begin{align*}
	|Q_{k,j}\cap D_{k+1}|
	&<C_{\tau}^*\alpha^{-(k+1)} G_\tau	\normadeflp{gw\car_{Q_{k,j}}}{\tau}	\normadeflp{\car_{Q_{k,j}}}{\tau'}.
	\end{align*}
	Then, by \eqref{423}(ii), we obtain that 
	\begin{align*}
	|Q_{k,j}\cap D_{k+1}|
	&<C_{\tau}^*\alpha^{-(k+1)}C_\tau \alpha^k G_\tau	\normadeflp{\car_{Q_{k,j}}}{\tau}	\normadeflp{\car_{Q_{k,j}}}{\tau'}\\
	&\leq C_{\tau}^*\alpha^{-(k+1)}C_\tau \alpha^k G_\tau C_\tau^{**}	|Q_{k,j}|:= \frac{\Pi}{\alpha}|Q_{k,j}|
	\end{align*}
	where the constant $C_{\tau}^{**}$ is provided by Lemma \ref{p en plog}. This gives \eqref{A pelito}. 
	Finally,
	\begin{align*}
	\frac{|F_{k,j}|}{|Q_{k,j}|}	
	&=\frac{|Q_{k,j}\setminus(Q_{k,j}\cap D_{k+1})|}{|Q_{k,j}|}
	=1-\frac{|Q_{k,j}\cap D_{k+1}|}{|Q_{k,j}|}>1-\frac{\Pi}{\alpha}>0
	\end{align*}
	since, $\alpha>\Pi$, and we obtain \eqref{B pelito}. 
	
	In order to complete the proof we must show that (\ref{116}) holds.
	In fact, if $\ell(Q_0)=2^{-d_0}$ with $d_0\in\zZ$, by Lemma \ref{p en plog} we have
	\begin{align*}
	&\sum_{Q\,:\, Q\subset Q_0}
	\overline{K}\left(\frac{\ell(Q)}{2}\right)
	|3Q||Q|
	\frac{\normadeflp{\car_{3Q} f}{\omega}}{\normadeflp{\car_{3Q}}{\omega}}\\
	&\quad\quad\lesssim\sum_{d\geq d_0}\overline{K}(2^{-d-1})
	2^{-d n}\sum_{Q\subset Q_0\,:\,\ell(Q)=2^{-d }}
	\normadeflp{f\car_{3Q}}{\omega}\normadeflp{\car_{3Q}}{\omega'}.
	\end{align*}
	Thus, applying Lemma \ref{lemaQ0} with $f$ and $g:=\car_{{3Q_0}}$, we obtain that 
	\begin{align*}
	\sum_{Q\,:\, Q\subset Q_0}
	\overline{K}\left(\frac{\ell(Q)}{2}\right)
	|3Q||Q|
	\frac{\normadeflp{\car_{3Q} f}{\omega}}{\normadeflp{\car_{3Q}}{\omega}}&\lesssim
	\normadeflp{f\car_{3Q_0}}{\omega}\normadeflp{\car_{3Q_0}}{\omega'}\sum_{d\geq d_0}\overline{K}(2^{-d-1})                2^{-d n}\\
	&\lesssim
	\normadeflp{f\car_{3Q_0}}{\omega}\normadeflp{\car_{3Q_0}}{\omega'}\widetilde{K}(\delta(1+\varepsilon)\ell(Q_0)),
	\end{align*}
	where the last estimate follows as in \cite{P}.
	This proves \eqref{116} and concludes the proof of Theorem \ref{teo conm variable con lipschitz generalizado}.
\end{proof}
\medskip
\begin{proof}[Proof of Theorem~\ref{teo general conmut lipchitz constante}]
	We use the same technique as in the proof of the Theorem \ref{teo conm variable con lipschitz generalizado} to obtain that
	\begin{align*}
	&\int_{\zR^n} |\tfi^{b,m}  f(x) |w(x)g(x)\,dx \nonumber\\
	&\quad\lesssim\sum_Q \overline{K}\left(\frac{\ell(Q)}{2}\right)
	\sum_{j=0}^m
	\int_{3Q}|b(z)-b_Q|^{j}f(z)\,dz
	\,\int_Q |b(x)-b_Q|^{m-j}g(x)w(x)\,dx.
	\end{align*}
	Hence, by Lemma \ref{obs teorema delta variable},
	\begin{align}\label{151}
	&\int_{\z} |\tfi^{b,m}  f(x) |w(x)g(x)\,dx \nonumber\\
	&\quad\lesssim\sum_Q \overline{K}\left(\frac{\ell(Q)}{2}\right)
	\normadefp{\car_{Q}}{n/\delta(\cdot)}^m |Q|
	\int_{3Q}f(z)\,dz
	\,\fint_Q g(x)w(x)\,dx
	\end{align}
	Thus, given some constant $\alpha$ larger than $2^n$ and proceeding as in [\cite{P}, Proof of Theorem 2.1], for each $k\in\zZ$ there exists a family of maximal non-overlaping dyadic cubes $\{Q_{k,j}\}_{j\in\zZ}$, the  Calderón-Zygmund cubes, such that we can estimate (\ref{151}) by a multiple of
	\begin{align}\label{152}
	\sum_{(k,j)\in\zZ\times\zZ}
	\widetilde{K}(\ell(\gamma Q_{k,j}))
	\normadefp{\car_{Q_{k,j}}}{n/\delta(\cdot)}^m |Q_{k,j}|
	\fint_{\gamma Q_{k,j}}f(z)dz \,
	\fint_{\gamma Q_{k,j}} g(z)w(z)dz,
	\end{align}
	where $\gamma=\max \{3,\delta(1+\varepsilon)\}$ with $\varepsilon,\delta$ the numbers provided by condition $\mathfrak{D}$.
	By condition $\mathcal{F}$ and H\"{o}lder's inequality we have
	\begin{align*}
	\fint_{\gamma Q_{k,j}}f(z) \,dz
	\lesssim
	\frac{\normadeflpl{\car_{\gamma Q_{k,j}} f}{D}}
	{\normadeflpl{\car_{\gamma Q_{k,j}}}{D}}
	\frac{\normadeflpl{\car_{\gamma Q_{k,j}}}{D^*}}
	{\normadeflpl{\car_{\gamma Q_{k,j}}}{D^*}}
	\lesssim
	\frac{\normadeflpl{\car_{\gamma Q_{k,j}} fv}{B}}
	{\normadeflpl{\car_{\gamma Q_{k,j}}}{B}}
	\frac{\normadeflpl{\car_{\gamma Q_{k,j}} v^{-1}}{A}}
	{\normadeflpl{\car_{\gamma Q_{k,j}}}{A}}
	\end{align*}
	and
	\begin{align*}
	\fint_{\gamma Q_{k,j}}g(z)w(z) \,dz
	\lesssim
	\frac{\normadeflpl{\car_{\gamma Q_{k,j}} gw}{J}}
	{\normadeflpl{\car_{\gamma Q_{k,j}}}{J}}
	\frac{\normadeflpl{\car_{\gamma Q_{k,j}}}{J^*}}
	{\normadeflpl{\car_{\gamma Q_{k,j}}}{J^*}}
	\lesssim
	\frac{\normadeflpl{\car_{\gamma Q_{k,j}} g}{H}}
	{\normadeflpl{\car_{\gamma Q_{k,j}}}{H}}
	\frac{\normadeflpl{\car_{\gamma Q_{k,j}} w}{E}}
	{\normadeflpl{\car_{\gamma Q_{k,j}}}{E}}.
	\end{align*}
	Then from \eqref{152} and by Fefferman-Phong type condition \eqref{hipotesis FM MO} on the weights we have
	\begin{align*}
	& \int_{\z} |\tfi^{b,m}  f(x) |w(x)g(x)\,dx \nonumber\\
	&\quad\lesssim
	\sum_{(k,j)\in\zZ\times\zZ}
	\widetilde{K}(l(\gamma Q_{k,j}))
	\normadefp{\car_{Q_{k,j}}}{n/\ex{\delta}}^m |Q_{k,j}|
	\frac{\normadeflpl{\car_{\gamma Q_{k,j}}fv}{B}}
	{\normadeflpl{\car_{\gamma Q_{k,j}}}{B}}
	\frac{\normadeflpl{\car_{\gamma Q_{k,j}}v^{-1}}{A}}
	{\normadeflpl{\car_{\gamma Q_{k,j}}}{A}}\\
	&\quad\quad\quad\times
	\frac{\normadeflpl{\car_{\gamma Q_{k,j}}g}{H}}
	{\normadeflpl{\car_{\gamma Q_{k,j}}}{H}}
	\frac{\normadeflpl{\car_{\gamma Q_{k,j}}w}{E}}
	{\normadeflpl{\car_{\gamma Q_{k,j}}}{E}}\\
	&\quad\le \kappa
	\sum_{(k,j)\in\zZ\times\zZ}
	|Q_{k,j}|
	\frac{\normadeflpl{\car_{\gamma Q_{k,j}}fv}{B}}
	{\normadeflpl{\car_{\gamma Q_{k,j}}}{B}}
	\frac{\normadeflpl{\car_{\gamma Q_{k,j}}g}{H}}
	{\normadeflpl{\car_{\gamma Q_{k,j}}}{H}}
	\frac{\normadeflp{\car_{\gamma Q_{k,j}}}{p}}{\normadeflp{\car_{\gamma Q_{k,j}}}{q}}.
	\end{align*}
	Let $\beta(\cdot)$ be defined as in Lemma \ref{equivalenciabeta}.
	Then, by this lemma, the last sum is equivalent to
	\begin{equation*}
	\kappa\sum_{(k,j)\in\zZ\times\zZ}
	|Q_{k,j}|
	\frac{\normadeflpl{\car_{\gamma Q_{k,j}}fv}{B}}
	{\normadeflpl{\car_{\gamma Q_{k,j}}}{B}}
	\normadeflp{\car_{\gamma Q_{k,j}}}{\beta}
	\frac{\normadeflpl{\car_{\gamma Q_{k,j}}g}{H}}
	{\normadeflpl{\car_{\gamma Q_{k,j}}}{H}}.
	\end{equation*}
	We shall use the following properties of Calderón-Zygmund cubes.
	For each $k,j\in\zZ$ we can consider the sets $D_k=\bigcup_{j\in\zZ}Q_{k,j}$ and $F_{k,j}=Q_{k,j}\setminus(Q_{k,j}\cap D_{k+1})$.
	Thus $\{F_{k,j}\}_{(k,j)\in\zZ\times\zZ}$ is a disjoint family of sets  which satisfy 	
	$$|Q_{k,j}|<\frac{1}{1-\frac{2^n}{\alpha}}|F_{k,j}|.$$
	Then
	\begin{align*}
	\int_{\z} \tfi f(x) g(x)w(x)\,dx
	&\lesssim \kappa\sum_{(k,j)\in\zZ\times\zZ}
	|F_{k,j}|
	\frac{\normadeflpl{\car_{\gamma Q_{k,j}}fv}{B}}
	{\normadeflpl{\car_{\gamma Q_{k,j}}}{B}}
	\normadeflp{\car_{\gamma Q_{k,j}}}{\beta}
	\frac{\normadeflpl{\car_{\gamma Q_{k,j}}g}{H}}
	{\normadeflpl{\car_{\gamma Q_{k,j}}}{H}}\\
	&\leq  \kappa\int_{\zR^n} M_{B(L,\cdot)}(fv)(y)
	\mbetasf{\beta}{H(L,\cdot)}{(g)}(y)dy\\
	&\lesssim\kappa                    \normadeflp{M_{B(L,\cdot)}(fv)}{p}
	\normadeflp{\mbetasf{\beta}{H(L,\cdot)}{(g)}}{p'}\\
	&\lesssim \kappa\normadeflp{fv}{p}
	\end{align*}
	where we have used the hyphotesis (\ref{hipotesis maximal}) and (\ref{hipotesis maximal fraccionaria}).
\end{proof}

\bibliographystyle{plain}
\bibliography{BIBLIOGRAFIALU2}

\end{document}